\documentclass[12pt, oneside]{amsart}

\usepackage[utf8]{inputenc}
\usepackage{lmodern}

\usepackage{amssymb}
\usepackage{amsthm}
\usepackage{mathtools}
\usepackage[only,Yup]{stmaryrd} 
\usepackage{mathrsfs}    
\usepackage[new]{old-arrows}   

\usepackage[a4paper,left=2.5cm,top=3cm,right=2.5cm,bottom=2.5cm]{geometry}

\usepackage[all,cmtip]{xy}

\usepackage{enumerate}

\usepackage{multicol}

\usepackage{indentfirst}

\usepackage{hyperref}
\hypersetup{colorlinks=true,allcolors=black}  


\pagestyle{plain}

\DeclareMathOperator{\diam}{diam}
\DeclareMathOperator{\codim}{codim}
\DeclareMathOperator{\bas}{bas}
\DeclareMathOperator{\hor}{Hor}

\DeclareMathOperator{\vol}{vol}

\DeclareMathOperator{\ric}{Ric}

\DeclareMathOperator*{\spannn}{span}
\DeclareMathOperator{\sym}{S}
\DeclareMathOperator{\poin}{P}
\DeclareMathOperator{\depth}{depth}

\mathchardef\ordinarycolon\mathcode`\:
\mathcode`\:=\string"8000
\begingroup \catcode`\:=\active
  \gdef:{\mathrel{\mathop\ordinarycolon}}
\endgroup

\numberwithin{equation}{section}

\begin{document}

\title{Equivariant basic cohomology under deformations}

\author{Francisco C.~Caramello Jr.}
\address{Departamento de Matemática, Universidade Federal de Santa Catarina, R. Eng. Agr. Andrei Cristian Ferreira, 88040-900, Florianópolis - SC, Brazil}
\email{francisco.caramello@ufsc.br}
\thanks{The first author was supported by grant \#2018/14980-0, São Paulo Research Foundation (FAPESP)}

\author{Dirk Töben}
\address{Departamento de Matemática, Universidade Federal de São Carlos, Rod.~Washington Luís, Km 235, 13565-905, São Carlos - SP, Brazil}
\email{dirktoben@dm.ufscar.br}

\subjclass[2010]{Primary 53C12; Secondary 55N25}

\theoremstyle{definition}
\newtheorem{example}{Example}[section]
\newtheorem{definition}[example]{Definition}
\newtheorem{remark}[example]{Remark}
\theoremstyle{plain}
\newtheorem{proposition}[example]{Proposition}
\newtheorem{theorem}[example]{Theorem}
\newtheorem{lemma}[example]{Lemma}
\newtheorem{corollary}[example]{Corollary}
\newtheorem{claim}[example]{Claim}
\newtheorem{scholium}[example]{Scholium}
\newtheorem{thmx}{Theorem}
\newtheorem{corx}[thmx]{Corollary}

\newcommand{\dif}[0]{\mathrm{d}}
\newcommand{\od}[2]{\frac{\dif #1}{\dif #2}}
\newcommand{\pd}[2]{\frac{\partial #1}{\partial #2}}
\newcommand{\dcov}[2]{\frac{\nabla #1}{\dif #2}}
\newcommand{\proin}[2]{\left\langle #1, #2 \right\rangle}
\newcommand{\f}[0]{\mathcal{F}}
\newcommand{\g}[0]{\mathcal{G}}
\newcommand{\piorb}[0]{\pi_1^{\mathrm{orb}}}
\newcommand\blfootnote[1]{%
  \begingroup
  \renewcommand\thefootnote{}\footnote{#1}%
  \addtocounter{footnote}{-1}%
  \endgroup
}

\newcommand{\cm}[1]{\marginpar{*}{\tiny #1 }}
\newcommand{\bt}{\tiny \bf}

\begin{abstract}
There is a natural way to deform a Killing foliation with non-closed leaves, due to Ghys and Haefliger--Salem, into a closed foliation, i.e., a foliation whose leaves are all closed. Certain transverse geometric and topological properties are preserved under these deformations, as previously shown by the authors. For instance, the basic Euler characteristic is invariant. In this article we show that the equivariant basic cohomology ring structure is preserved under these deformations, which in turn leads to a sufficient algebraic condition (namely, equivariant formality) for the Betti numbers of basic cohomology to be preserved as well. In particular, this is true for the deformation of the Reeb orbit foliation of a $K$-contact manifold. Another consequence is that there is a universal bound on the sum of basic Betti numbers of any equivariantly formal, positively curved Killing foliation of a given codimension. We also show that a Killing foliation with negative transverse Ricci curvature is closed. If the transverse sectional curvature is negative we show, furthermore, that its fundamental group has exponential growth. Finally, we obtain a transverse generalization of Synge's theorem to Killing foliations.
\end{abstract}

\maketitle
\setcounter{tocdepth}{1}
\tableofcontents

\section{Introduction}
A Killing foliation is a complete Riemannian foliation whose Molino sheaf is trivial (see Section \ref{section: killing foliations}). This class includes complete Riemannian foliations of simply connected manifolds and also foliations given by the orbits of isometric actions. A construction by A.~Haefliger and E.~Salem in \cite{haefliger2} (see also \cite[Théorème A]{ghys} by E.~Ghys) allows one to deform a non-closed Killing foliation $\f$ into a \emph{closed} foliation $\g$, that is, a foliation whose leaves are all closed submanifolds, which can be chosen arbitrarily close to $\f$. Here we will call this a \textit{regular deformation} of $\f$. In \cite[Theorem B]{caramello}, which we restate here as Theorem \ref{theorem: deformation}, we showed that some aspects of the transverse geometry, like the sign of transverse sectional curvature, are preserved throughout those deformations. Results from Riemannian geometry could then be applied to the orbifold quotient $M/\g$, from which we could then draw back conclusions about the structure of the original foliation $\f$ (whose quotient is not accessible to usual tools of Riemannian geometry).

The transverse topology of $\f$ can also be studied using this deformation technique. The complex of basic forms $H(\f)$, for example, plays the role of the de Rham cohomology of the ``virtual'' quotient $M/\f$ (see Section \ref{subsection: foliations}). We have shown in \cite[Theorem 7.4]{caramello} that the basic Euler characteristic $\chi(\f)$ remains constant under regular deformations. In spite of this, the basic Betti numbers are in general not invariant (see Example \ref{exe: nozawa}). In this paper we show that the ring structure of the {\em equivariant} basic cohomology is in fact preserved under regular deformations. More specifically, a Killing foliation $\f$ admits a natural transverse infinitesimal action of an Abelian Lie algebra $\mathfrak{a}$ (its structural algebra, see Section \ref{section: killing foliations}), which allows one to define the $\mathfrak{a}$-equivariant cohomology $H_{\mathfrak{a}}(\f)$. This is a transverse generalization of the classical equivariant cohomology with respect to a torus action on a manifold. It was first introduced in \cite{goertsches} and further studied in \cite{goertsches2} and \cite{toben}. Here we show the following.

\begin{thmx}\label{thrm: invariance of H_a intro}
Let $\f$ be a Killing foliation of a compact manifold $M$ and let $\f_t$ be a regular deformation of $\f$. Then, for each $t$,
$$H_{\mathfrak{a}}(\f)\cong H_{\mathfrak{a}}(\f_t)$$
as $\mathbb{R}$-algebras.
\end{thmx}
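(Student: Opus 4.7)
The plan is to present both Cartan complexes $\Omega_{\mathfrak{a}}(\f)$ and $\Omega_{\mathfrak{a}}(\f_t)$ as Koszul-type differential graded algebras over the basic de Rham complex $\Omega(\overline{\f})$ of the common closure foliation, and to reduce the isomorphism to the preservation of a single cohomological invariant. The starting point is the geometric observation that, since the Haefliger--Salem construction tilts the leaves of $\f$ inside each leaf closure by elements of the structural algebra $\mathfrak{a}$, one has $\overline{\f_t}=\overline{\f}$ as singular Riemannian foliations (cf.~Theorem~\ref{theorem: deformation}), so that both $\Omega(\overline{\f})$ and the transverse infinitesimal $\mathfrak{a}$-action are common to all $\f_t$. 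Fix a basis $X_1,\dots,X_k$ of $\mathfrak{a}$ and, for each $t$, pick $\mathfrak{a}$-invariant $\f_t$-basic $1$-forms $\alpha^1_t,\dots,\alpha^k_t$ dual to the $X_j$, i.e.\ with $\iota_{X_j}\alpha^i_t=\delta^i_j$; such ``transverse connections'' exist by averaging against the locally free transverse $\mathfrak{a}$-action. They yield a graded-algebra identification
$$\Omega_{\mathfrak{a}}(\f_t)\;\cong\;\Omega(\overline{\f})\otimes \Lambda\mathfrak{a}^*\otimes S\mathfrak{a}^*,$$
under which the Cartan differential acts by $d$ on $\Omega(\overline{\f})$, by $0$ on the generators $u^i\in S^1\mathfrak{a}^*$, and by $d_{\mathfrak{a}}\alpha^i_t=d\alpha^i_t-u^i$, with $d\alpha^i_t\in \Omega^2(\overline{\f})$ (since $\iota_{X_j}d\alpha^i_t=L_{X_j}\alpha^i_t-d\delta^i_j=0$).

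The crux is to show that the curvature classes $[d\alpha^i_t]\in H^2(\overline{\f})$ do not depend on $t$. These classes should be identified with the transverse Chern classes of the $1$-parameter subgroups of $\mathfrak{a}$ acting on $(M,\overline{\f})$, and hence depend only on the ambient $\mathfrak{a}$-action on the closure, which is fixed by Theorem~\ref{theorem: deformation}. To make this rigorous, I would pass to the Molino bundle $\hat M\to M$, where $\mathfrak{a}$ integrates to a genuine rank-$k$ torus action and each $\hat\pi^*\alpha^i_t$ becomes a connection $1$-form on the principal circle bundle associated with the $X_i$-orbits; its curvature then represents a $t$-independent first Chern class which descends to $[d\alpha^i_t]\in H^2(\overline{\f})$.

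Given this rigidity, one may choose $\gamma^i_t\in\Omega^1(\overline{\f})$ with $d\gamma^i_t=d\alpha^i_t-d\alpha^i_0$ and replace $\alpha^i_t$ by $\alpha^i_t-\gamma^i_t$; because $\gamma^i_t$ is $\overline{\f}$-basic, the substitution preserves $\mathfrak{a}$-invariance, $\f_t$-basicity, and the duality relation $\iota_{X_j}\alpha^i_t=\delta^i_j$, while forcing $d\alpha^i_t=d\alpha^i_0$. The Cartan differentials on the common graded algebra $\Omega(\overline{\f})\otimes \Lambda\mathfrak{a}^*\otimes S\mathfrak{a}^*$ then coincide, producing an isomorphism of DGAs and hence of the cohomology rings $H_{\mathfrak{a}}(\f)\cong H_{\mathfrak{a}}(\f_t)$. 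The principal obstacle is the Molino-bundle computation establishing the $t$-independence of the curvature classes; once that is in hand, everything else is a formal consequence of the Koszul structure of the Cartan model.
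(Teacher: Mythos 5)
Your construction breaks down at the very first step: the ``transverse connections'' $\alpha^1_t,\dots,\alpha^k_t$ with $\iota_{X_j}\alpha^i_t=\delta^i_j$ do not exist in general, because the transverse $\mathfrak{a}$-action on $(M,\f_t)$ is \emph{not} locally free. Its fixed set is $M^{\mathfrak{a}}$, the union of the closed leaves of $\f$, and along such a leaf every foliate representative of $X_j$ is tangent to the leaf, so $\iota_{X_j}\alpha=0$ for every $\f_t$-basic $1$-form $\alpha$ there; no amount of averaging produces $\iota_{X_j}\alpha^i_t=\delta^i_j$. (More generally, wherever $\dim\overline{L}-\dim L<\dim\mathfrak{a}$ some combination of the $X_j$ vanishes transversely.) This is not a peripheral case: in the situations the theorem is meant for ($K$-contact manifolds, equivariantly formal foliations) $M^{\mathfrak{a}}\neq\emptyset$, and if your Koszul presentation $\Omega_{\mathfrak{a}}(\f_t)\cong\Omega(\overline{\f})\otimes\Lambda\mathfrak{a}^*\otimes S\mathfrak{a}^*$ were available, $H_{\mathfrak{a}}(\f_t)$ would reduce to the cohomology of the $\mathfrak{a}$-basic subcomplex and in particular could not be a free $\sym(\mathfrak{a}^*)$-module of positive rank, contradicting the equivariantly formal examples. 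A second error is the ``geometric observation'' $\overline{\f_t}=\overline{\f}$: the deformation takes place \emph{inside} the leaf closures of $\f$, but the closures of the $\f_t$-leaves are in general strictly smaller; at $t=1$ one has $\overline{\f_1}=\g\neq\overline{\f}$ whenever $\f$ is non-closed (this is the whole point of the deformation). Finally, the ``crux'' you defer to — that $\mathfrak{a}$ integrates to a torus action on the Molino bundle whose Chern classes control $[d\alpha^i_t]$ — is unsubstantiated; obtaining a genuine torus action is exactly the content of the Haefliger--Salem construction and is not available on $\hat M$ itself.

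The paper's proof of Theorem \ref{thrm: invariance of H_a under deformations} sidesteps precisely this obstruction by moving the local freeness to where it actually holds: on the Haefliger--Salem orbifold $\mathcal{O}$ the subgroups $H(t)<\mathbb{T}^N$ act locally freely, so $\Omega(\mathcal{O})$ is a locally free $\mathfrak{h}(t)^\star$-algebra (Proposition \ref{prop: isometric action on orbifold is of type C}) with an $\mathfrak{a}$-invariant connection, and the commuting actions principle (Proposition \ref{prop: commuting actions principle}) identifies $H_{\mathfrak{a}}(\f_t)\cong H_{\mathfrak{a}}(\Omega_{\bas\mathfrak{h}(t)}(\mathcal{O}))$ with the fixed ring $H_{\mathfrak{t}}(\mathcal{O})$ for every $t$; the $t$-dependence survives only in the $\sym(\mathfrak{a}^*)$-module structure, which is why the conclusion is an $\mathbb{R}$-algebra isomorphism and not more. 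If you want to salvage your Cartan-model picture, it has to be run for the $\mathfrak{h}(t)$-directions on $\mathcal{O}$, not for the $\mathfrak{a}$-directions on $M$.
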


As a corollary we get that $H_\mathfrak{a}(\f)$ is isomorphic to the $\mathbb{T}^d$-equivariant cohomology of the quotient orbifold $M/\g$ of the closed foliation $\g=\f_1$, by an application of the equivariant De Rham theorem for orbifolds (Theorem \ref{thrm: equivariant De Rham theorem for orbifolds}). Another application of Theorem \ref{thrm: invariance of H_a intro} yields us that the equivariant formality of the $\mathfrak{a}$-action on $\f$ is a sufficient condition for the basic Betti numbers to remain constant throughout regular deformations.

\begin{thmx}\label{theo: Betti numbers intro}
Let $\f$ be a Killing foliation of a compact manifold $M$ and let $\f_t$ be a regular deformation. If the transverse action of the structural algebra $\mathfrak{a}$ of $\f$ is equivariantly formal, then $b_i(\f_t)$ is constant in $t$, for each $i$.
\end{thmx}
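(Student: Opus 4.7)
The plan is to deduce Theorem~\ref{theo: Betti numbers intro} directly from Theorem~\ref{thrm: invariance of H_a intro} by exploiting the structural consequences of equivariant formality. Recall that the $\mathfrak{a}$-action on $\f$ being equivariantly formal means that $H_{\mathfrak{a}}(\f)$ is a free module over the polynomial algebra $S(\mathfrak{a}^*)$; equivalently, there is a (non-canonical) isomorphism of graded $S(\mathfrak{a}^*)$-modules
$$H_{\mathfrak{a}}(\f)\cong H(\f)\otimes_{\mathbb{R}} S(\mathfrak{a}^*),$$
with $\mathfrak{a}^*$ placed in cohomological degree $2$. In particular, the Poincaré series with respect to the total degree factors as
$$P_t\bigl(H_{\mathfrak{a}}(\f)\bigr)=\frac{P_t\bigl(H(\f)\bigr)}{(1-t^2)^{\dim\mathfrak{a}}}.$$

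First I would verify that the isomorphism produced by Theorem~\ref{thrm: invariance of H_a intro} respects the canonical inclusion $S(\mathfrak{a}^*)\hookrightarrow H_{\mathfrak{a}}(-)$, so that it is in fact an isomorphism of graded $S(\mathfrak{a}^*)$-algebras and not merely of $\mathbb{R}$-algebras. This should fall out of the construction in Theorem~\ref{thrm: invariance of H_a intro} itself: the isomorphism is expected to arise at the level of Cartan models from a chain map that intertwines the common transverse $\mathfrak{a}$-action on $\f$ and $\f_t$, and thus acts as the identity on the universal $S(\mathfrak{a}^*)$-subalgebra. Granting this, freeness of $H_{\mathfrak{a}}(\f)$ over $S(\mathfrak{a}^*)$ transfers verbatim to $H_{\mathfrak{a}}(\f_t)$, so that the $\mathfrak{a}$-action on $\f_t$ is also equivariantly formal, with the same bigraded free rank.

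Applying the formality factorization to $\f_t$ then gives
$$P_t\bigl(H(\f_t)\bigr)=(1-t^2)^{\dim\mathfrak{a}}\cdot P_t\bigl(H_{\mathfrak{a}}(\f_t)\bigr)=(1-t^2)^{\dim\mathfrak{a}}\cdot P_t\bigl(H_{\mathfrak{a}}(\f)\bigr)=P_t\bigl(H(\f)\bigr),$$
so $b_i(\f_t)=b_i(\f)$ for every $i$, uniformly in $t$, which is the claim.

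The main obstacle I anticipate is precisely the $S(\mathfrak{a}^*)$-linearity of the isomorphism in Theorem~\ref{thrm: invariance of H_a intro}, since its statement as phrased only records an isomorphism of $\mathbb{R}$-algebras. If the underlying construction does not automatically carry the module structure, then an additional argument is needed to identify $S(\mathfrak{a}^*)\subset H_{\mathfrak{a}}(\f)$ intrinsically, for instance as the subalgebra of universal characteristic classes of the transverse $\mathfrak{a}$-action, or via the filtration induced by powers of the augmentation ideal. Alternatively, one could try to bypass linearity altogether by a Poincaré series comparison, using the characterization of equivariant formality as the equality $\dim H(\f)=\rank_{S(\mathfrak{a}^*)} H_{\mathfrak{a}}(\f)$, provided the correct copy of $S(\mathfrak{a}^*)$ can be pinned down from the $\mathbb{R}$-algebra data alone.
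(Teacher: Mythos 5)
There is a genuine gap at the pivotal step of your main route. You propose to upgrade the isomorphism of Theorem \ref{thrm: invariance of H_a intro} (Theorem \ref{thrm: invariance of H_a under deformations} in the body) to an isomorphism of $\sym(\mathfrak{a}^*)$-algebras, on the expectation that the chain-level map fixes the universal copy of $\sym(\mathfrak{a}^*)$. This is exactly what fails, and the paper states it explicitly in the remark following Theorem \ref{thrm: invariance of H_a under deformations}: the isomorphism is $j_t^{-1}\circ j_0$, where each $j_t$ identifies $H_{\mathfrak{a}}(\f_t)$ with $H_{\mathfrak{t}}(\mathcal{O})$ by means of the splitting $\mathfrak{t}=\mathfrak{h}(t)\times\mathfrak{a}$; since this splitting varies with $t$, the induced $\sym(\mathfrak{a}^*)$-module structures on $H_{\mathfrak{t}}(\mathcal{O})$ differ (the extension of a polynomial on $\mathfrak{a}$ to $\mathfrak{t}$ must vanish on $\mathfrak{h}(t)$), and $j_t^{-1}\circ j_0$ does not preserve them. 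So the sentence ``granting this, freeness transfers verbatim'' rests on a premise that is false in general, and your fallback suggestions (an intrinsic characterization of the subalgebra $\sym(\mathfrak{a}^*)\subset H_{\mathfrak{a}}(\f)$, or an augmentation-ideal filtration) are not carried out and do not obviously single out the right copy from the ring data alone.

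The missing idea, and the route the paper takes (Proposition \ref{prop: formality is preserved}), is that equivariant formality \emph{can} be read off from the bare graded ring: for these algebras, $H_{\mathfrak{a}}(\f)$ is a free $\sym(\mathfrak{a}^*)$-module if and only if it is a Cohen--Macaulay ring (Goertsches--T\"oben, Proposition B.3, combined with Allday--Puppe, Theorem A.6.18). Being Cohen--Macaulay is preserved by the graded $\mathbb{R}$-algebra isomorphism of Theorem \ref{thrm: invariance of H_a under deformations}, so $\f_t$ is again equivariantly formal, now with respect to its \emph{own} $\sym(\mathfrak{a}^*)$-module structure. From that point on your argument agrees with the paper's (Theorem \ref{theo: Betti numbers are invariant}): since the isomorphism is graded, the Poincar\'e series of $H_{\mathfrak{a}}(\f)$ and $H_{\mathfrak{a}}(\f_t)$ coincide, each factors as $\poin_{\sym(\mathfrak{a}^*)}(s)\poin_{H(\f_\cdot)}(s)$ by formality, and cancelling $\poin_{\sym(\mathfrak{a}^*)}(s)$ in the integral domain $\mathbb{Z}[[s]]$ gives $b_i(\f_t)=b_i(\f)$ for all $i$ and $t$.
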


There are several interesting situations in which equivariant formality of the $\mathfrak{a}$-action holds:

\begin{corx}
Let $\f$ be a transversely orientable Killing foliation of a compact manifold $M$ and let $M^{\mathfrak{a}}$ be the union of closed leaves. If $H^{odd}(M,\f)=0$, if $\dim H(M^{\mathfrak{a}}/\f)=\dim H(\f)$, or if there is a basic Morse--Bott function whose critical set is equal to $M^{\mathfrak{a}}$, then the basic Betti numbers are constant under regular deformations.
\end{corx}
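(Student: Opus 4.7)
The plan is to reduce each of the three hypotheses to equivariant formality of the transverse $\mathfrak{a}$-action on $\f$, and then invoke Theorem \ref{theo: Betti numbers intro} to conclude invariance of the basic Betti numbers under regular deformations. Transverse orientability is what allows us to run the standard localization and Poincaré-duality arguments in the transverse Cartan model for $H_{\mathfrak{a}}(\f)$, as developed in \cite{goertsches,goertsches2,toben}.

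First, if $H^{odd}(\f)=0$, then the Cartan-model spectral sequence with $E_2\cong H(\f)\otimes\sym(\mathfrak{a}^*)$ is concentrated in even total degree (recall that generators of $\mathfrak{a}^*$ carry degree $2$), so every differential vanishes for parity reasons. Hence the spectral sequence collapses at $E_2$, making $H_{\mathfrak{a}}(\f)$ a free $\sym(\mathfrak{a}^*)$-module, i.e., equivariant formality holds. Second, the basic Borel localization theorem yields the inequality $\dim H(M^{\mathfrak{a}}/\f)\leq \dim H(\f)$, with equality exactly characterizing equivariant formality, in direct analogy with the classical theorem of Borel for torus actions; so the equality hypothesis directly produces equivariant formality. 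Third, the basic Morse inequalities applied to the stable-manifold decomposition of a basic Morse--Bott function whose critical set equals $M^{\mathfrak{a}}$ give the reverse estimate $\dim H(\f)\leq \dim H(M^{\mathfrak{a}}/\f)$, which combined with the localization inequality yields the equality in the second hypothesis, whence equivariant formality again.

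The potentially delicate point is that each of these statements is classically a theorem about compact torus actions on compact manifolds, and one needs the corresponding statements for the transverse $\mathfrak{a}$-action on the Cartan model of a Killing foliation. However, all the required tools---parity collapse in the Cartan model, the basic Borel localization theorem, and the basic Morse inequalities in the equivariant setting---are already established in \cite{goertsches,goertsches2,toben}, so the corollary follows by invoking these criteria together with Theorem \ref{theo: Betti numbers intro}; the new content of the statement is precisely this bridge from equivariant formality to invariance of basic Betti numbers under regular deformations.
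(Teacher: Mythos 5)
Your proposal is correct and follows essentially the same route as the paper: the three hypotheses are exactly the known sufficient conditions for equivariant formality of the transverse $\mathfrak{a}$-action established in the work of Goertsches--Töben (summarized in the paper's Example \ref{exe: charracterization formality}), and the conclusion then follows from Theorem \ref{theo: Betti numbers are invariant}. The sketches you give of the underlying mechanisms (parity collapse, basic Borel localization, basic Morse--Bott theory) are consistent with those references, so no gap remains.
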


A significant particular case of the later is that of compact $K$-contact manifolds, which includes compact Sasakian manifolds. In fact, the Reeb orbit foliation is a homogeneous Riemannian foliation and therefore a Killing foliation. It is defined by a one-parameter subgroup of the isometry group, whose closure is hence a torus. Regular deformations are exactly families of one-parameter subgroups in this torus. The contact moment map is a basic Morse-Bott function whose critical set is the union of closed leaves (see \cite[Proposition 6.3]{goertsches3}), so we have the following.

\begin{corx}
Let $(M, \alpha, \mathrm{g})$ be a compact $K$-contact manifold. Then the basic Betti numbers of the Reeb orbit foliation are constant under regular deformations.
\end{corx}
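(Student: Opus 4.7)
The plan is to derive this corollary as a direct application of the third alternative in the preceding corollary, by exhibiting the contact moment map as a basic Morse--Bott function on $M$ whose critical set coincides with $M^{\mathfrak{a}}$. The structural prerequisites are already sketched in the paragraph preceding the statement, so the substantive work reduces to identifying the appropriate Morse--Bott function and confirming that $\f$ satisfies the hypotheses of that corollary.

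First I would make the setup precise. Since the Reeb vector field $\xi$ is Killing for $\mathrm{g}$, its flow lies in $\mathrm{Iso}(M,\mathrm{g})$, whose closure $\mathbb{T}$ is a compact abelian Lie group, hence a torus. The one-parameter subgroup $\langle \xi \rangle$ is dense in $\mathbb{T}$, and its orbit foliation is $\f$, which is therefore homogeneous Riemannian and a fortiori Killing. The structural algebra $\mathfrak{a}$ is identified with $\mathrm{Lie}(\mathbb{T})/\langle \xi \rangle$ acting transversely, and $M^{\mathfrak{a}}$ coincides with the union of closed leaves of $\f$. Transverse orientability of $\f$ follows from the symplectic structure on $(\ker \alpha, \dif \alpha)$, which identifies $N\f$ with the contact distribution.

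Next I would invoke \cite[Proposition 6.3]{goertsches3}. Choosing lifts $\widetilde{X}_1, \dots, \widetilde{X}_d \in \mathrm{Lie}(\mathbb{T})$ of a basis of $\mathfrak{a}$, the functions $\mu_i := \alpha(\widetilde{X}_i)$ are basic (since $\xi$ preserves $\alpha$ and commutes with each $\widetilde{X}_i$), their simultaneous critical set equals $M^{\mathfrak{a}}$, and a generic linear combination $f = \sum_i c_i \mu_i$ is a basic Morse--Bott function with $\mathrm{Crit}(f) = M^{\mathfrak{a}}$. With such an $f$ in hand, the third hypothesis of the preceding corollary is satisfied, and we conclude that the basic Betti numbers $b_i(\f_t)$ remain constant along any regular deformation.

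The only real potential obstacle is the Morse--Bott property of $f$, but this is precisely the content of \cite[Proposition 6.3]{goertsches3}, which ultimately rests on the standard fact that components of a torus moment map on a transverse symplectic structure are Morse--Bott. Thus the corollary amounts to a translation from the preceding one, with $K$-contact geometry supplying the required basic Morse--Bott function via the contact moment map.
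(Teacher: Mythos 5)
Your proposal is correct and follows essentially the same route as the paper: the paper's justification is precisely the paragraph preceding the statement, namely that the Reeb orbit foliation is homogeneous Riemannian (hence Killing), regular deformations correspond to one-parameter subgroups of the torus closure of the Reeb flow, and the contact moment map furnishes a basic Morse--Bott function with critical set $M^{\mathfrak{a}}$ (citing \cite[Proposition 6.3]{goertsches3}), so the third condition of the preceding corollary applies. Your added remarks on transverse orientability via the symplectic structure on $\ker\alpha$ and on choosing a suitable component of the moment map are just slightly more explicit versions of the same argument.
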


We also investigate in this article some applications of the deformation technique to the transverse geometry of $\f$, including some results on how the sign of transverse sectional and Ricci curvatures ($\sec_\f$ and $\ric_\f$, respectively) influences the transverse topology of $\f$. For instance, a classical theorem by Gromov establishes that there is a constant $C=C(n)$ that bounds the sum of all Betti numbers of any non-negatively curved Riemannian manifold of dimension $n$ (see \cite[\S 0.2A]{gromov2}). There is a generalization of this theorem for Alexandrov spaces \cite[Theorem 1]{koh} which can be applied, in particular, to orbifolds. A consequence of Theorem \ref{theo: Betti numbers intro} is that, in the equivariantly formal case, the basic Betti numbers of $\f$ coincide with the Betti numbers of the orbifold $M/\g$, for a closed approximation $\g$ (see Corollary \ref{cor: basic betti numbers as orbifold}). This gives us the following.

\begin{thmx}\label{theorem: Gromov intro}
There exists a constant $C=C(q)$ such that every $q$-codimensional Killing foliation $\f$ of a compact manifold $M$ with $\sec_\f>0$ and whose transverse action of the structural algebra $\mathfrak{a}$ is equivariantly formal satisfies
$$\sum_{i=0}^q b_i(\f) \leq C.$$
\end{thmx}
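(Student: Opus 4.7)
The plan is to reduce the statement to the orbifold version of Gromov's Betti number bound via a regular deformation, combining the three main tools developed in the paper.

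First, given such an $\f$, I would choose a regular deformation $\f_t$ of $\f$ ending in a closed foliation $\g = \f_1$ sufficiently $C^\infty$-close to $\f$. By Theorem \ref{theorem: deformation}, the sign of the transverse sectional curvature is preserved along the deformation, so we may assume $\sec_\g > 0$ as well. Since $\g$ is closed, the leaf space $M/\g$ is a compact Riemannian orbifold of dimension equal to the codimension $q$ of $\f$, and the condition $\sec_\g > 0$ translates into positive sectional curvature of this orbifold in the usual (Alexandrov) sense.

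Next, I would invoke the equivariant formality hypothesis. By Theorem \ref{theo: Betti numbers intro}, the basic Betti numbers $b_i(\f_t)$ are constant in $t$, so $b_i(\f) = b_i(\g)$ for every $i$. By Corollary \ref{cor: basic betti numbers as orbifold}, these basic Betti numbers agree with the de Rham Betti numbers of the quotient orbifold $M/\g$. Putting these two identifications together gives
\[
\sum_{i=0}^q b_i(\f) \;=\; \sum_{i=0}^q b_i(M/\g).
\]

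Finally, since $M/\g$ is a compact positively curved Riemannian orbifold of dimension $q$, it is in particular a compact Alexandrov space of dimension $q$ with nonnegative curvature, so the orbifold/Alexandrov version of Gromov's Betti number theorem (\cite[Theorem 1]{koh}) produces a constant $C = C(q)$, depending only on $q$, such that $\sum_{i=0}^q b_i(M/\g) \leq C(q)$. Combining with the previous identity finishes the proof.

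The conceptually delicate step is the second one: one must ensure that the chosen deformation both lands on a closed foliation whose positive curvature is preserved and does not alter the basic Betti numbers. Both are guaranteed by the previously proved Theorems \ref{theorem: deformation} and \ref{theo: Betti numbers intro}, so once these are in place the argument is a short reduction to the orbifold setting; the only nontrivial input at the end is Kapovitch's bound for Alexandrov spaces.
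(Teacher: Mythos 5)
Your argument is essentially the paper's own proof: deform $\f$ via Theorem \ref{theorem: deformation} into a closed foliation $\g$ with $\sec_\g>0$, use equivariant formality (through Theorem \ref{theo: Betti numbers intro} and Corollary \ref{cor: basic betti numbers as orbifold}) to identify $\sum_i b_i(\f)$ with $\sum_i b_i(|M/\!/\g|)$, and apply the Betti number bound for compact positively curved Alexandrov spaces from \cite{koh}. The only slip is cosmetic: the Alexandrov-space bound you invoke is due to Koh \cite{koh}, not Kapovitch.
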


This prompts the question of whether the hypothesis that the action is equivariantly formal can be dropped. We also prove an orbifold version of Bochner's theorem on Killing vector fields (Theorem \ref{teo: bochner for orbifolds}) and apply it to the leaf space $M/\g$ of a closed approximation of $\f$ to obtain the theorem below.

\begin{thmx}\label{theorem: bochner intro}
Let $(M,\f)$ be a complete Riemannian foliation with transverse Ricci curvature satisfying $\ric_\f\leq c <0$. If either
\begin{enumerate}[(i)]
\item $\f$ is a Killing foliation and $M$ is compact, or
\item $\f$ is transversely compact and $|\pi_1(M)|<\infty$,
\end{enumerate}
then $\f$ is closed.
\end{thmx}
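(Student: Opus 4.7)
My plan is to combine the regular deformation technique of Ghys--Haefliger--Salem with the orbifold Bochner theorem (Theorem~\ref{teo: bochner for orbifolds}), applied to the leaf space of a suitable closed deformation of $\f$.

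For case~(i) I would argue by contradiction. If $\f$ were not closed, its structural algebra $\mathfrak{a}$ would be non-trivial. I would then choose a regular deformation $\f_t$ of $\f$ so that $\g := \f_1$ is a closed foliation whose leaves are sufficiently close to those of $\f$. Since $M$ is compact and $\g$ is closed, $M/\g$ inherits the structure of a compact Riemannian orbifold. By continuity of the transverse metric along the deformation parameter, combined with Theorem~\ref{theorem: deformation}, the bound $\ric_\f \leq c < 0$ passes to a strict bound $\ric_\g < 0$ for $\g$ sufficiently close to $\f$; equivalently, $M/\g$ carries an orbifold metric of strictly negative Ricci curvature. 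Since $\f$ is Killing, the structural algebra $\mathfrak{a}$ acts on $M/\g$ by (orbifold) Killing vector fields, and this action is non-trivial. But Theorem~\ref{teo: bochner for orbifolds} forbids non-trivial Killing fields on a compact orbifold with $\ric < 0$, giving a contradiction.

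For case~(ii) the plan is to reduce to case~(i) via the universal cover $\pi : \tilde M \to M$, which is a finite cover since $|\pi_1(M)| < \infty$. The lifted foliation $\tilde\f$ is then a complete Riemannian foliation on a simply connected manifold, hence Killing. Transverse compactness is preserved by finite covers, and the bound $\ric_{\tilde\f} \leq c < 0$ lifts from $\f$ since transverse Ricci curvature is local. Applying the regular deformation construction to $\tilde\f$ produces a closed foliation $\tilde\g$ on $\tilde M$, and transverse compactness of $\tilde\f$ forces $\tilde M/\tilde\g$ to be a compact Riemannian orbifold. The argument of case~(i) now applies verbatim on $\tilde M/\tilde\g$ and yields that $\tilde\f$ is closed, which then descends along the finite cover $\pi$ to show that $\f$ itself is closed.

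The steps I expect to require the most care are (a) justifying that the strict negativity of $\ric$ survives the deformation for a closed $\g$ taken close enough to $\f$, which should follow from continuity of the transverse metric in $t$ together with Theorem~\ref{theorem: deformation}; and (b) in case~(ii), verifying that the regular deformation machinery applies to the Killing foliation $\tilde\f$ on the possibly non-compact cover $\tilde M$. The latter should be fine because the transverse Molino structure is controlled by a transverse torus of isometries whose compactness is inherited from transverse compactness of $\f$, so the construction of Haefliger--Salem goes through unchanged.
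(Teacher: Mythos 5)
Your case (i) is essentially the paper's argument (deform to a closed $\g$ with $\ric_\g<0$ and contradict the orbifold Bochner theorem via the effective isometric $\mathbb{T}^d$-action, $d=\dim\mathfrak{a}>0$, on $M/\!/\g$), but you skip one step the paper needs: Theorem \ref{teo: bochner for orbifolds} is stated (and proved, via Stokes) for a compact \emph{oriented} orbifold, so you must first reduce to the transversely orientable case. The paper does this by passing to the transverse orientation double cover of $M$, noting that the pullback of a non-closed Riemannian foliation under a finite covering is still non-closed, and only then deforming. With that addition, case (i) is fine.

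Case (ii) has a genuine gap. There $M$ is \emph{not} assumed compact, only transversely compact, so the universal cover $\widetilde{M}$ may be non-compact even though the covering is finite. The regular deformation theorem (Theorem \ref{theorem: deformation}), and with it the whole case (i) machinery --- the Haefliger--Salem model $(\mathcal{O},H<\mathbb{T}^N)$ together with the classifying map $\Upsilon:M\to\mathcal{O}$ and the isometric torus action on the quotient orbifold --- is stated and proved only for Killing foliations of \emph{compact} manifolds; your remark that ``the construction of Haefliger--Salem goes through unchanged'' on $\widetilde{M}$ is exactly the point that would need proof, and it is not supplied. The paper circumvents this instead of extending the deformation: it first shows $\widetilde{\f}$ is transversely compact by bounding $\diam(\widetilde{M}/\overline{\widetilde{\f}})$ by $|\pi_1(M)|\cdot\diam(M/\overline{\f})$ and invoking Hopf--Rinow for length spaces (your one-line claim that transverse compactness passes to finite covers needs some such argument), and then uses the realization theorem \cite[Theorem 3.7]{haefliger2} to produce a Killing foliation $\f'$ on a \emph{compact} simply connected manifold $M'$ with $\mathscr{H}_{\widetilde{\f}}\cong\mathscr{H}_{\f'}$. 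Since non-closedness and the transverse bound $\ric\leq c<0$ are data of the holonomy pseudogroup, the compact case (Theorem \ref{theorem: transverse bochner}) applies to $(M',\f')$ and yields the contradiction. Unless you can actually prove a deformation theorem for complete, transversely compact Killing foliations on non-compact manifolds, you should replace your reduction by this pseudogroup-realization step.
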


This result should be compared with \cite[Theorem 2]{hebda} by J. Hebda, which establishes that a complete Riemannian foliation $\f$ with transverse \emph{sectional} curvature satisfying $\sec_\f\leq0$ is developable, that is, its lift to the universal covering of $M$ is given by the fibers of a submersion $\widetilde{M}\to N$. Hebda also proves in \cite{hebda} that a compact manifold whose fundamental group is nilpotent does not admit a Riemannian foliation with $\sec_\f<0$. Here we show the following.

\begin{thmx}\label{theo: Milnor trasnverso intro}
Let $\f$ be a Killing foliation on a compact manifold $M$ such that $\sec_\f<0$. Then $\f$ is closed and $\pi_1(\f)$ grows exponentially. In particular, $\pi_1(M)$ grows exponentially.
\end{thmx}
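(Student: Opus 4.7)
The plan is to combine Theorem \ref{theorem: bochner intro} with an orbifold version of Milnor's theorem on the exponential growth of fundamental groups of negatively curved manifolds.

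First I would establish that $\f$ is closed. Since $M$ is compact and $\sec_\f<0$, by compactness we have the uniform bound $\sec_\f\leq c<0$ for some $c$. Tracing over transverse $2$-planes gives $\ric_\f\leq (q-1)c<0$, where $q=\codim\f$. As $\f$ is a Killing foliation on a compact manifold, hypothesis (i) of Theorem \ref{theorem: bochner intro} holds, so $\f$ is closed. Consequently the leaf space $M/\f$ is a compact Riemannian orbifold, the transverse metric descends to a metric on $M/\f$ with $\sec_{M/\f}\leq c<0$, and the standard identification $\pi_1(\f)=\piorb(M/\f)$ reduces the exponential-growth claim to an assertion about the orbifold leaf space.

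Second, I would run Milnor's classical argument in the orbifold setting: any compact Riemannian orbifold $\mathcal{O}$ with $\sec_\mathcal{O}\leq c<0$ has $\piorb(\mathcal{O})$ of exponential growth. The orbifold Cartan--Hadamard theorem, which applies because $\sec\leq 0$, shows that the universal orbifold cover $\widetilde{\mathcal{O}}$ is a simply connected complete Riemannian manifold diffeomorphic to $\mathbb{R}^q$. Volume comparison against the hyperbolic model of curvature $c$ then yields exponential growth of the volumes of geodesic balls in $\widetilde{\mathcal{O}}$. Comparing the volume of a ball of radius $n\cdot\diam(\mathcal{O})$ with the volume of a Dirichlet fundamental domain for the free, properly discontinuous deck action of $\piorb(\mathcal{O})$ produces exponentially many group elements of word length at most $n$.

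Finally, the quotient map $M\to M/\f$ is an orbifold submersion and fits into the exact sequence
$$\pi_1(L)\longrightarrow \pi_1(M)\longrightarrow \piorb(M/\f)\longrightarrow 1,$$
giving a surjection $\pi_1(M)\twoheadrightarrow \pi_1(\f)$; since a surjection between finitely generated groups cannot decrease the growth rate, the exponential growth of $\pi_1(\f)$ transfers to $\pi_1(M)$. The main technical obstacle is the orbifold Cartan--Hadamard and volume comparison package underlying the second step; once these orbifold analogues are stated and proved (or carefully cited from the orbifold literature), the remainder is a direct transcription of Milnor's original argument and a routine exact-sequence observation.
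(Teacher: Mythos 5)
Your proposal is correct and takes essentially the same route as the paper: closedness comes from the Bochner-type result (the paper re-runs the deformation-plus-orbifold-Bochner argument directly, you invoke the transverse Bochner theorem via the trace $\sec_\f\leq c<0\Rightarrow\ric_\f\leq(q-1)c<0$, which is the same machinery), then an orbifold Milnor theorem proved on the Hadamard universal cover with volume comparison (the paper's Theorem \ref{theorem: Milnor growth for orbifolds}, via Švarc--Milnor and Günther's inequality), and finally the surjection $\pi_1(M)\twoheadrightarrow\pi_1(\f)$ of \eqref{surjection fundamental groups} to transfer the growth. One small correction: the deck action of $\piorb(M/\!/\f)$ on the orbifold universal cover is properly discontinuous but generally \emph{not} free at preimages of singular points; since Milnor's counting argument (and the Švarc--Milnor lemma) requires only proper discontinuity and cocompactness, this does not affect the proof.
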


This is a transverse generalization of Milnor's theorem on the growth of the fundamental group \cite[Theorem 2]{milnor}. Here $\pi_1(\f)$ is the fundamental group of the holonomy pseudogroup of $\f$, which coincides with $\pi_1(M)$ when $\f$ is the trivial foliation by points (and, more generally, with $\piorb(M/\f)$, when $\f$ is closed). Finally, combining the deformation technique with the orbifold version of Synge's theorem, proved in \cite[Corollary 2.3.6]{yeroshkin}, we obtain the following transverse version of this theorem.

\begin{thmx}\label{theorem: synge intro}
Let $\f$ be a Killing foliation of a compact manifold $M$, with $\sec_\f >0$.
\begin{enumerate}[(i)]
\item If $\codim\f$ is even and $\f$ is transversely orientable, then $M/\overline{\f}$ is simply connected.
\item If $\codim\f$ is odd and, for each $L\in \f$, the germinal holonomy of $L$ preserves transverse orientation, then $\f$ is transversely orientable.
\end{enumerate}
\end{thmx}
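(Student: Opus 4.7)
The strategy is to replace $\f$ by a closed approximation and apply Synge's theorem for Riemannian orbifolds \cite[Corollary 2.3.6]{yeroshkin}. Take a regular deformation $\f_t$ of $\f$ and let $\g = \f_1$ be a closed foliation in this family. By Theorem \ref{theorem: deformation}, $\sec_\g > 0$ and $\codim\g = \codim\f$; thus $X := M/\g$ is a compact Riemannian orbifold of dimension $\codim\f$ with positive sectional curvature, to which the orbifold Synge theorem will be applied.

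For part (i), I first check that transverse orientability is preserved by the deformation: since $T\f_t^\perp$ varies continuously with $t$ and the ``vertical'' bundle $T\overline{\f}/T\f_t$ inside $T\f_t^\perp$ is globally trivialized by the structural algebra $\mathfrak{a}$, an orientation of $T\f^\perp$ determines one of $T\g^\perp$. Hence $X$ is an orientable orbifold of even dimension with $\sec>0$, and the orbifold Synge theorem yields $\piorb(X) = 1$. To transfer this to $Y := M/\overline{\f}$, consider the natural projection $X \to Y$ obtained by further quotienting by the closures. This is an orbifold submersion whose fiber over $[\overline{L}]$ is the leaf space of $\g$ restricted to $\overline{L}$, a connected compact orbifold of dimension $d := \dim\mathfrak{a}$. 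The long exact sequence of orbifold homotopy groups then gives a surjection $\piorb(X) \twoheadrightarrow \piorb(Y)$, so $\piorb(Y) = 1$.

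For part (ii), the orbifold $X$ has odd dimension $\codim\f$ and $\sec>0$. The hypothesis that the germinal holonomy of every $\f$-leaf preserves transverse orientation translates, via the deformation, into the statement that every orbifold isotropy group of $X$ acts on its slice by orientation-preserving transformations; that is, $X$ is locally orientable. The orbifold Synge theorem then implies that $X$ is globally orientable, so $\g$ is transversely orientable. Propagating this orientation continuously back along the path $\f_t$ yields a transverse orientation of $\f$.

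The main technical obstacle will be the careful dictionary between foliation-theoretic and orbifold-theoretic notions: identifying transverse orientability of $\f$ with orientability of $M/\g$, relating the germinal holonomy of $\f$-leaves to the orbifold isotropies of $M/\g$, and verifying that $M/\g \to M/\overline{\f}$ is a well-behaved orbifold fibration admitting the required long exact sequence of homotopy groups.
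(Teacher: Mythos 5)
Your overall strategy (deform to a closed foliation $\g$, apply the orbifold Synge theorem of Yeroshkin to $M/\!/\g$, and transfer the conclusion back) is the same as the paper's, but there is a genuine gap in part (i). You propose to pass from simple connectivity of $M/\g$ to that of $M/\overline{\f}$ via a ``long exact sequence of orbifold homotopy groups'' for the projection $M/\g\to M/\overline{\f}$, with fiber a compact orbifold of dimension $d=\dim\mathfrak{a}$. This map is the quotient of $M/\!/\g$ by the $\mathbb{T}^d$-action of Theorem \ref{theorem: deformation}(\ref{toric action on the quotient}), and that action is in general neither free nor of constant orbit dimension (orbits collapse to points over closed leaves of $\f$, and isotropy varies), so the map is not a fibration and no such exact sequence is available; moreover $\overline{\f}$ is only a \emph{singular} Riemannian foliation, so $M/\overline{\f}$ carries no natural orbifold structure and ``$\piorb(M/\overline{\f})$'' is not defined. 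The correct (and much softer) step, used in the paper, is Armstrong's theorem: the orbit space of a connected compact Lie group acting on a simply connected space is simply connected, applied to $\bigl(M/\g\bigr)/\mathbb{T}^d\cong M/\overline{\f}$.

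In part (ii) your outline matches the paper's, but the step you defer to the ``dictionary'' is precisely where the substance lies: the hypothesis concerns the germinal holonomy of $\f$-leaves, while local orientability of $M/\!/\g$ concerns the holonomy (local) groups of the $\g$-leaves, and these are different groups (an $\f$-leaf is dense in a union of $\g$-leaves). The paper bridges them through the Haefliger--Salem orbifold $\mathcal{O}$: the holonomy of the leaf $Hx\in\f_H$ is an extension $\widetilde{H}_x$ of $H_x$ by the local group $\Gamma_x$ (Galaz-Garc\'{\i}a et al.), so the hypothesis forces $\widetilde{H}_x$, hence $\Gamma_x$, and then the corresponding extension $\widetilde{K}_x$ for the closed group $K$, to preserve transverse orientation; only then is $M/\!/\g$ locally orientable. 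Without this argument the claim ``the hypothesis translates into local orientability of $X$'' is unsupported. By contrast, your transfers of transverse orientation along the deformation (in (i) from $\f$ to $\g$, and at the end of (ii) back from $\g$ to $\f$) are acceptable: since $T\f_t$ varies continuously, the normal bundles $\nu\f_t$ are all isomorphic, so orientability passes both ways; the paper instead uses the map $\iota$ on basic tensors and, for the return trip in (ii), the $\mathbb{T}^d$-invariant Riemannian volume form of $M/\!/\g$ together with item (\ref{symmetries and basic tensors}) of Theorem \ref{theorem: deformation}.
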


\section{Preliminaries}\label{section: preliminaires}

In this section we establish our notation and review some basic notions on Riemannian foliations. Throughout this article the objects are supposed to be of class $C^\infty$ (smooth).

\subsection{Foliations}\label{subsection: foliations}

Let $\f$ denote a $p$-dimensional foliation of a $(p+q)$-dimensional \emph{connected} manifold $M$ without boundary. The number $q$ is the \textit{codimension} of $\f$. The subbundle of $TM$ consisting of the subspaces tangent to the leaves will be denoted by $T\f$ and the Lie algebra of the vector fields with values in $T\f$ by $\mathfrak{X}(\f)$. We say that $\f$ is \textit{transversely orientable} when its \textit{normal bundle} $\nu\f:=TM/T\f$ is orientable. The set of the closures of the leaves of $\f$ is denoted by $\overline{\f}:=\{\overline{L}\ |\ L\in\f\}$. In the simple case where all leaves are closed, i.e. $\overline{\f}=\f$, we say that $\f$ is a \textit{closed} foliation. Furthermore, we say that $\f$ is \textit{transversely compact} when $M/\overline{\f}$ is compact.

Recall that $\f$ can be defined by an open cover $\{U_i\}_{i\in I}$ of $M$, submersions $\pi_i:U_i\to T_i$, with $T_i\subset\mathbb{R}^q$, and diffeomorphisms $\gamma_{ij}:\pi_j(U_i\cap U_j)\to\pi_i(U_i\cap U_j)$ satisfying $\gamma_{ij}\circ\pi_j|_{U_i\cap U_j}=\pi_i|_{U_i\cap U_j}$ for all $i,j\in I$. The collection $\{\gamma_{ij}\}$ is a \textit{Haefliger cocycle} representing $\f$. We assume without loss of generality that the fibers  of each $\pi_i$ are connected. The pseudogroup of local diffeomorphisms generated by $\gamma=\{\gamma_{ij}\}$ acting on $T_\gamma:=\bigsqcup_i T_i$ is the \textit{holonomy pseudogroup} of $\f$ associated to $\gamma$, that we denote by $\mathscr{H}_\gamma$. If $\delta$ is another cocycle defining $\f$ then $\mathscr{H}_\delta$ is equivalent to $\mathscr{H}_\gamma$, meaning that there is a maximal collection $\Phi$ of diffeomorphisms $\varphi$ from open sets of $T_\delta$ to open sets of $T_\gamma$ such that $\{\mathrm{Dom}(\varphi)\ |\ \varphi\in\Phi\}$ covers $T_\delta$, $\{\mathrm{Im}(\varphi)\ |\ \varphi\in\Phi\}$ covers $T_\gamma$ and, for all $\varphi,\psi\in\Phi$, $h\in\mathscr{H}_\delta$ and $h'\in\mathscr{H}_\gamma$, one has $\psi^{-1}\circ h'\circ\varphi\in\mathscr{H}_\delta$, $\psi\circ h\circ\varphi^{-1}\in\mathscr{H}_\gamma$ and $h'\circ\varphi\circ h\in\Phi$. We write $(T_\f,\mathscr{H}_\f)$ to denote a representative of the equivalence class of these pseudogroups. Note that the orbit space $T_\f/\mathscr{H}_\f$ coincides with the leaf space $M/\f$. For a leaf $L\in\f$, we denote the \textit{germinal holonomy group of $L$ at $x\in L\cap T_\f$} by $\mathrm{Hol}_x(L)$, which is the group of germs of elements in the stabilizer $(\mathscr{H}_\f)_x$ (for details, see \cite[\S 2.3]{candel} or \cite[Section 2.1]{mrcun}).

There is a generalization of the notion of fundamental group to pseudogroups, which is defined in terms of homotopy classes of $\mathscr{H}$-loops. We refer to \cite{salem} for details. If we have a foliation $(M,\f)$ and fix $x\in M$ and $\overline{x}=\pi_i(x)\in T_\f$, this construction applied to $\mathscr{H}_\f$ furnishes an invariant $\pi_1(\f,\overline{x})$ of $\f$, whose isomorphism class does not depend on the Haefliger cocycle representing $\f$. Moreover, when $M$ is connected this invariant is independent of the base point, up to conjugacy, so we will often denote it simply by $\pi_1(\f)$. There is a natural surjection
\begin{equation}\pi_1(M,x)\longrightarrow\pi_1(\f,\overline{x})\label{surjection fundamental groups}\end{equation} defined by locally projecting a loop on $M$ by a collection of submersions $\pi_i$, whose domains cover the image of the loop, and gluing those projections by the appropriate holonomy maps $\gamma_{ij}$ (see \cite[Section 1.11]{salem}). 

A \textit{foliate field} on $M$ is a vector field in the Lie subalgebra
$$\mathfrak{L}(\f)=\{X\in\mathfrak{X}(M)\ |\ [X,\mathfrak{X}(\f)]\subset\mathfrak{X}(\f)\}.$$
If $X\in\mathfrak{L}(\f)$ and $\pi:U\to T$ is a submersion locally defining $\f$, then $X|_U$ is $\pi$-related to some vector field $X_T\in\mathfrak{X}(T)$. In fact, this characterizes $\mathfrak{L}(\f)$ \cite[Section 2.2]{molino}. The Lie algebra $\mathfrak{L}(\f)$ also has the structure of a module over the ring $\Omega^0(\f)$ of \textit{basic functions} of $\f$, that is, functions $f\in C^{\infty}(M)$ such that $Xf=0$ for every $X\in\mathfrak{X}(\f)$. The quotient of $\mathfrak{L}(\f)$ by the ideal $\mathfrak{X}(\f)$ yields the Lie algebra $\mathfrak{l}(\f)$ of \textit{transverse vector fields}. For $X\in\mathfrak{L}(\f)$, we denote its induced transverse field by $\overline{X}\in\mathfrak{l}(\f)$. Note that each $\overline{X}$ defines a unique section of $\nu\f$ and corresponds to a unique $\mathscr{H}_\f$-invariant vector field on a chosen transversal $T_\f$.

A (covariant) tensor field $\xi$ on $M$ is \textit{$\f$-basic} if its invariant, i.e.~$\mathcal{L}_X\xi=0$ for all $X\in\mathfrak{X}(\f)$, and horizontal, i.e.~$\xi(X_1,\dots,X_k)=0$ whenever some $X_i\in\mathfrak{X}(\f)$. These are the tensor fields that project through the local defining submersions to $\mathscr{H}_\f$-invariant tensor fields on $T_\f$. We denote the algebra of $\f$-basic tensor fields by $\mathcal{T}(\f)$. In particular, we can consider the algebra $\Omega(\f)<\Omega(M)$ of \textit{$\f$-basic differential forms}. By Cartan's formula, $\alpha$ is basic if, and only if, $i_X\alpha=0$ and $i_X(d\alpha)=0$ for all $X\in\mathfrak{X}(\f)$. Hence $\Omega(\f)$ is closed under the exterior derivative. The cohomology groups of the subcomplex
$$\cdots \stackrel{d}{\longrightarrow} \Omega^{i-1}(\f) \stackrel{d}{\longrightarrow} \Omega^i(\f) \stackrel{d}{\longrightarrow} \Omega^{i+1}(\f) \stackrel{d}{\longrightarrow}\cdots ,$$
are the \textit{basic cohomology groups} of $\f$, denoted by $H^i(\f)$. If the dimensions $\dim(H^i(\f))$ are all finite, we define the \textit{basic Euler characteristic} of $\f$ as the alternating sum
$$\chi(\f)=\sum_i(-1)^i\dim(H^i(\f)).$$
In analogy with the classical case, we call $b_i(\f):=\dim(H^i(\f))$ the \textit{basic Betti numbers} of $\f$. Notice that when $\f$ is the trivial foliation of $M$ by points this recovers the usual Euler characteristic and the Betti numbers of $M$.

\subsection{Orbifolds}

In this section we will briefly recall some facts about orbifolds and adapt some classical theorems from Riemannian geometry to this setting. We refer to \cite[Chapter 1]{adem}, \cite[Section 2.4]{mrcun} and \cite[Section 2]{kleiner} for more detailed introductions to orbifolds.

Let $\mathcal{O}$ be a smooth orbifold and $\mathcal{A}=\{(\widetilde{U}_i,H_i,\phi_i)\}_{i\in I}$ an atlas for $\mathcal{O}$. The underlying topological space of $\mathcal{O}$ will be denoted by $|\mathcal{O}|$. Recall that each chart of $\mathcal{A}$ consists of a connected open subset $\widetilde{U}$ of $\mathbb{R}^n$, a finite subgroup $H$ of $\mathrm{Diff}(\widetilde{U})$ and an $H$-invariant map $\phi:\widetilde{U}\to |\mathcal{O}|$ that descends to a homeomorphism $\widetilde{U}/H\cong U$, for an open subset $U\subset |\mathcal{O}|$. Given $(\widetilde{U},H,\phi)$ and $x=\phi(\tilde{x})\in U$, the \textit{local group} $\Gamma_x$ of $\mathcal{O}$ at $x$ is the stabilizer $H_{\tilde{x}}<H$. Its isomorphism class is independent of both the chart and the point $\tilde{x}$ over $x$. Note that for every $x\in |\mathcal{O}|$ it is always possible to choose a chart $(\widetilde{U}_x,\Gamma_x,\phi_x)$ over $x$. We say that $\mathcal{O}$ is \textit{locally orientable} when each $\Gamma_x$ acts by orientation-preserving diffeomorphisms of $\widetilde{U}_x$. The \textit{canonical stratification} of $\mathcal{O}$ is the decomposition
$$|\mathcal{O}|=\bigsqcup \Sigma_\alpha,$$
where each $\Sigma_\alpha$ is a connected component of some $\Sigma_\Gamma=\{x\in|\mathcal{O}|\ |\ \Gamma_x\cong\Gamma\}$. Each $\Sigma_\alpha$ is a manifold. Consider, for each $i$, the union $\Sigma^i$ of all $\Sigma_\alpha$ of dimension $i$, and let $r_1<\dots<r_k=n$ be the indices for which $\Sigma^i\neq\emptyset$. The \textit{regular stratum} $\mathcal{O}_{\mathrm{reg}}:=\Sigma^n$ is open and dense in $|\mathcal{O}|$. The \textit{singular locus} of $\mathcal{O}$ is the set $|\mathcal{O}|\setminus\mathcal{O}_{\mathrm{reg}}$. It will also be useful to consider $\mathcal{O}_{\mathrm{min}}:=\Sigma^{r_1}$.

\begin{example}\label{example: orbifold structure on the leaf space} If $\f$ is a smooth foliation of codimension $q$ of a manifold $M$ and the leaves of $\f$ are compact and with finite holonomy, then $M/\f$ has a natural $q$-dimensional orbifold structure relative to which the local group of a leaf in $M/\f$ is its holonomy group \cite[Theorem 2.15]{mrcun}. We will denote the orbifold obtained this way by $M/\!/\f$ in order to distinguish it from the topological space $M/\f$. Of course, $|M/\!/\f|=M/\f$. Analogously, if a compact Lie group $G$ acts on $M$ and $\dim(G_x)$ is a constant function (that is, if the action is \textit{foliated}), then the quotient orbifold will be denoted by $M/\!/G$ (see also Example \ref{exe: foliated actions}). A foliation given by a foliated action is said to be \textit{homogeneous}. Conversely, any orbifold arises as the quotient space of a foliated action of a compact connected Lie group on a manifold. More precisely, $\mathcal{O}$ is diffeomorphic to $\mathcal{O}^{\Yup}_{\mathbb{C}}/\!/\mathrm{U}(n)$, where $\mathcal{O}^{\Yup}_{\mathbb{C}}$ is the unitary frame bundle of $\mathcal{O}$ with respect to some chosen complex Riemannian structure \cite[Proposition 2.23]{mrcun}.
\end{example}

Consider $U_\mathcal{A}:=\bigsqcup_{i\in I}\widetilde{U}_i$ and $\phi:=\{\phi_i\}_{i\in I}:U_\mathcal{A}\to |\mathcal{O}|$, that is, $x\in \widetilde{U}_i\subset U_\mathcal{A}$ implies $\phi(x)=\phi_i(x)$. A \textit{change of charts} of $\mathcal{A}$ is a diffeomorphism $h:V\to W$, with $V,W\subset U_\mathcal{A}$ open sets, such that $\phi\circ h=\phi|_V$. The collection of all changes of charts of $\mathcal{A}$ form a pseudogroup $\mathscr{H}_{\mathcal{A}}$ of local diffeomorphisms of $U_\mathcal{A}$, and $\phi$ induces a homeomorphism $U_\mathcal{A}/\mathscr{H}_{\mathcal{A}}\to|\mathcal{O}|$. As in the case of the holonomy pseudogroup of a foliation, if $\mathcal{B}$ is another atlas that is compatible with $\mathcal{A}$, the corresponding pseudogroups are equivalent. We will denote by $(U_\mathcal{O},\mathscr{H}_{\mathcal{O}})$ a representative of the equivalence class of these pseudogroups.


A \textit{smooth map} $f:\mathcal{O}\to\mathcal{P}$ consists of a continuous map $|f|:|\mathcal{O}|\to|\mathcal{P}|$ such that, for every $x\in|\mathcal{O}|$, there are charts $(\widetilde{U},H,\phi)$ and $(\widetilde{V},K,\psi)$ around $x$ and $f(x)$, respectively, a homomorphism $\overline{f}_x:H\to K$ and an $\overline{f}_x$-equivariant smooth map $\tilde{f}_x:\widetilde{U}\to\widetilde{V}$ satisfying $f(U)\subset V$ and $\psi\circ\tilde{f}_x=f\circ\phi$. There are relevant refinements of this notion, such as the \textit{good maps} defined in \cite{chenruan}, that are needed in some elementary constructions. These good maps correspond to morphisms when the orbifolds are viewed as Lie groupoids \cite[Proposition 5.1.7]{lupercio}. In particular, a smooth map $M\to\mathcal{O}$ ``in the orbifold sense'', as defined in \cite[p. 715]{haefliger2}, is a good map. Diffeomorphisms of orbifolds are smooth maps with smooth inverses. In particular, diffeomorphisms preserve the canonical stratification by local groups.

For $x\in|\mathcal{O}|$ fixed, we define the \textit{fundamental group of $\mathcal{O}$} at $x$ as $\piorb(\mathcal{O},x):=\pi_1(\mathscr{H}_{\mathcal{O}},\tilde{x})$, for some lift $\tilde{x}$ of $x$. This definition of $\piorb$ is equivalent to Thurston's definition via the automorphism group of the orbifold universal covering $\widetilde{\mathcal{O}}\to\mathcal{O}$ (see, for instance, \cite[Corollary 3.19]{bridson}). When $\mathcal{O}$ is connected, the isomorphism class of $\piorb(\mathcal{O},x)$ does not depend on $x$, so we often omit it in the notation. The group $\piorb(\mathcal{O})$ differs from $\pi_1(|\mathcal{O}|)$ in general, for it also captures information on the singularities.

Analogously to manifolds, one can define orbibundles over orbifolds (see, for instance \cite[p. 7]{kleiner}). The tangent bundle $T\mathcal{O}$, for example, is locally diffeomorphic to $T\widetilde{U}_x/\!/H_x$, for a chart $(\widetilde{U}_x,H_x,\phi_x)$. We follow \cite{kleiner} and consider the \textit{tangent space} $T_x\mathcal{O}$ to be the orbivector space isomorphic to $T_x\widetilde{U}_x$ together with the linearized $H_x$-action, while $C_x|\mathcal{O}|:=T_x\widetilde{U}_x/H_x$ is the \textit{tangent cone} at $x$. One then carry over the definition of the usual objects from the differential topology/geometry of manifolds, such as differential forms and Riemannian metrics, to the  orbifold setting as sections of appropriate orbibundles. These objects will then correspond to $\mathscr{H}_{\mathcal{O}}$-invariant objects on $U_\mathcal{O}$. A smooth differential form on $\mathcal{O}$, for example, is an $\mathscr{H}_{\mathcal{O}}$-invariant differential form on $U_\mathcal{O}$. The complex of smooth differential forms on $\mathcal{O}$ will be denoted by $\Omega(\mathcal{O})$, and its cohomology by $H(\mathcal{O})$. The following result can be seen as an orbifold version of De Rham's Theorem (see \cite[Theorem 1]{satake} or \cite[Theorem 2.13]{adem}).

\begin{theorem}[{\cite[Theorem 1]{satake}}]\label{theorem: Satake}
Let $\mathcal{O}$ be an orbifold. Then $H^i(\mathcal{O})\cong H^i(|\mathcal{O}|,\mathbb{R})$.
\end{theorem}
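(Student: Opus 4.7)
The plan is to establish Satake's theorem via the standard abstract de Rham / sheaf-theoretic route: exhibit the sheaf complex of orbifold differential forms as a fine resolution of the constant sheaf $\underline{\mathbb{R}}$ on $|\mathcal{O}|$, and then conclude $H^i(\Omega(\mathcal{O})) \cong H^i(|\mathcal{O}|, \mathbb{R})$ by the sheaf-theoretic de Rham argument.

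First I would define, for each open $U \subset |\mathcal{O}|$, the group $\Omega^k_{\mathcal{O}}(U) := \Omega^k(\mathcal{O}|_U)$ consisting of $\mathscr{H}_{\mathcal{O}}$-invariant smooth $k$-forms on the preimage $\phi^{-1}(U) \subset U_{\mathcal{O}}$, with the obvious restriction maps; this gives a sheaf $\Omega^k_{\mathcal{O}}$ on $|\mathcal{O}|$, and the exterior derivative commutes with restriction, producing a complex of sheaves $0 \to \underline{\mathbb{R}} \to \Omega^0_{\mathcal{O}} \to \Omega^1_{\mathcal{O}} \to \cdots$. The next step is the \emph{orbifold Poincaré lemma}: for any point $x \in |\mathcal{O}|$ with chart $(\widetilde{U}_x, \Gamma_x, \phi_x)$ where $\widetilde{U}_x$ is a $\Gamma_x$-stable open ball around the origin in $\mathbb{R}^n$, a closed $\Gamma_x$-invariant $k$-form $\alpha$ on $\widetilde{U}_x$ is exact in the $\Gamma_x$-invariant complex. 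Indeed, the classical homotopy operator $h : \Omega^k(\widetilde{U}_x) \to \Omega^{k-1}(\widetilde{U}_x)$ given by radial integration commutes with the linear $\Gamma_x$-action (after averaging, or directly since $h$ is built from contraction with the radial vector field, which is $\Gamma_x$-equivariant on a ball centered at a fixed point of the linearized action); hence $h\alpha$ is $\Gamma_x$-invariant and $d(h\alpha) = \alpha$. This shows that the above sequence of sheaves is exact at every stalk.

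Next I would verify that each $\Omega^k_{\mathcal{O}}$ is a \emph{fine} sheaf, which reduces to the existence of smooth orbifold partitions of unity subordinate to any open cover of $|\mathcal{O}|$; these are constructed in the usual way by taking ordinary partitions of unity in each chart $\widetilde{U}_i$ and averaging over the local group $H_i$, then patching via the atlas — the resulting $H_i$-invariant bump functions descend to $|\mathcal{O}|$ and give the required partition. With exactness at stalks and fineness in hand, the general sheaf-theoretic de Rham theorem (cf.~the argument in Bott–Tu) yields
\[
H^i(\Omega(\mathcal{O})) \;\cong\; H^i(|\mathcal{O}|,\underline{\mathbb{R}}) \;\cong\; H^i(|\mathcal{O}|,\mathbb{R}),
\]
since singular and sheaf cohomology with constant real coefficients agree on the paracompact, locally contractible space $|\mathcal{O}|$ (local contractibility follows from the fact that each tangent cone $C_x|\mathcal{O}|$ retracts to $x$, so $|\mathcal{O}|$ admits a basis of contractible neighborhoods).

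The main obstacle is the $\Gamma_x$-equivariance of the Poincaré homotopy: one must be careful that the radial contraction used for the local primitive is genuinely built from $\Gamma_x$-equivariant data. Choosing the chart so that $\tilde{x}$ is a fixed point of the linearized $\Gamma_x$-action on $T_{\tilde{x}}\widetilde{U}_x$ (possible by Bochner linearization, or directly from the definition of orbifold charts) makes the radial vector field $\Gamma_x$-invariant, after which the averaging trick applied to the homotopy operator resolves the issue. Everything else is a faithful translation of the manifold de Rham proof, with $\mathscr{H}_{\mathcal{O}}$-invariance replacing smoothness on a manifold.
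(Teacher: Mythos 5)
The paper does not actually prove this statement---it is quoted directly from Satake \cite[Theorem 1]{satake} (see also \cite[Theorem 2.13]{adem})---so there is no internal argument to compare yours against. Your sheaf-theoretic proof (sheafifying orbifold forms over $|\mathcal{O}|$, the orbifold Poincar\'e lemma via the $\Gamma_x$-equivariant or averaged radial homotopy at a chart centered at a fixed point, fineness via invariant partitions of unity, and the abstract de Rham theorem together with the sheaf--singular comparison on the paracompact, locally contractible space $|\mathcal{O}|$) is correct and is essentially the standard proof given in the cited references.
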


Since, for a foliation $(M,\f)$, there is an identification between $\f$-basic forms and $\mathscr{H}_\f$-invariant forms on $T_\f$, the following result is clear.

\begin{proposition}\label{prop: basic cohomology of closed foliations}
Let $(M,\f)$ be a smooth foliation whose leaves are all compact and have finite holonomy. Then the projection $\pi:M\to M/\!/\f$ induces an isomorphism of differential complexes $\pi^*:\Omega(M/\!/\f)\to\Omega(\f)$. In particular, $H(\f)\cong H(M/\!/\f)$.
\end{proposition}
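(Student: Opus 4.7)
The plan is to identify, simultaneously, the complex of basic forms $\Omega(\f)$ and the orbifold complex $\Omega(M/\!/\f)$ with the complex of $\mathscr{H}_\f$-invariant forms on a common transversal, and then to observe that $\pi^*$ realizes this identification.

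First I would invoke Reeb's stability theorem: since every leaf $L$ is compact with finite holonomy, each $x\in L$ admits a saturated neighborhood $U$, a transversal $\widetilde{T}_x\subset M$ through $x$ on which $\mathrm{Hol}_x(L)$ acts, and a foliated isomorphism presenting $(U,\f|_U)$ as the suspension $(\widetilde{T}_x\times\widetilde{L})/\mathrm{Hol}_x(L)$ for which the defining submersion is $\pi_U:U\to\widetilde{T}_x/\mathrm{Hol}_x(L)$. Covering $M$ by such distinguished neighborhoods $\{U_i\}$ simultaneously produces a Haefliger cocycle $\gamma$ for $\f$ with transversal $T_\gamma=\bigsqcup_i \widetilde{T}_{x_i}$ \emph{and} an orbifold atlas $\mathcal{A}=\{(\widetilde{T}_{x_i},\mathrm{Hol}_{x_i}(L_i),\phi_i)\}$ for $M/\!/\f$ whose underlying set $U_\mathcal{A}$ coincides with $T_\gamma$.

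Second, both the changes of charts of $\mathcal{A}$ and the generators of $\mathscr{H}_\gamma$ are characterized, among local diffeomorphisms of this common set $T_\gamma=U_\mathcal{A}$, by the property of preserving the projection to $M/\f=|M/\!/\f|$. Consequently the pseudogroups $\mathscr{H}_\gamma$ and $\mathscr{H}_\mathcal{A}$ coincide, and hence they belong to the same equivalence class as $\mathscr{H}_{M/\!/\f}$ and $\mathscr{H}_\f$, respectively. Under this equivalence, both $\Omega(\f)$ and $\Omega(M/\!/\f)$ are identified with the same subcomplex of $\Omega(T_\gamma)$, namely the $\mathscr{H}_\gamma$-invariant forms: on the orbifold side by the very definition of $\Omega(M/\!/\f)$, and on the foliation side by the standard fact that $\f$-basic forms on $M$ correspond bijectively, via the local submersions $\pi_i$, to $\mathscr{H}_\gamma$-invariant forms on $T_\gamma$ (injectivity because each $\pi_i$ is a submersion, surjectivity because the invariance ensures that the local pullbacks glue into a globally well-defined form on $M$).

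Third, under this identification the map $\pi^*$ is literally the composition of the two isomorphisms, and it commutes with the exterior differential because $d$ is natural under the local submersions $\pi_i$. The cohomological conclusion $H(\f)\cong H(M/\!/\f)$ then follows by passing to cohomology. I expect the only point requiring care to be the bookkeeping showing that Reeb stability neighborhoods can be chosen to serve simultaneously as foliation charts and as orbifold charts with matching pseudogroup of transition maps; once this is done, the rest is a formal manipulation.
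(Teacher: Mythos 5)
Your argument is correct and is essentially the paper's own: the paper treats the statement as immediate from the identification of $\f$-basic forms with $\mathscr{H}_\f$-invariant forms on $T_\f$ together with the orbifold structure on $M/\f$ from Example \ref{example: orbifold structure on the leaf space}, and your Reeb-stability bookkeeping simply makes that identification explicit. The only step you assert more strongly than you justify is that every projection-preserving local diffeomorphism of the transversal already belongs to the holonomy pseudogroup (this is what yields surjectivity of $\pi^*$); for compact leaves with finite, effectively acting holonomy this is a standard lemma, so it is a presentational point rather than a gap.
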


A smooth \textit{foliation of an orbifold} $\mathcal{O}$ is a smooth foliation $\f$ of $U_\mathcal{O}$, which is $\mathscr{H}_{\mathcal{O}}$-invariant. The atlas can be chosen so that on each $\widetilde{U}_i$ the foliation is defined by the connected fibers of a submersion $\pi_i$ onto a manifold $T_i$. In this case the holonomy pseudogroup $\mathscr{H}_\f$ of $\f$ is generated by the local diffeomorphisms of $T_\f:=\bigsqcup_{i\in I}T_i$ that are the projections of elements in $\mathscr{H}_{\mathcal{O}}$ (see \cite[\S 3.2]{haefliger2}). All notions defined in Section \ref{subsection: foliations} for foliations on manifolds therefore extend to foliations on orbifolds.

\begin{example}
Suppose that a Lie group $G$ acts smoothly on an orbifold $\mathcal{O}$, that is, there is a smooth map $\mu:G\times \mathcal{O}\to \mathcal{O}$ whose underlying continuous map $|\mu|$ is an action. As in the case of actions on manifolds (see Example \ref{example: orbifold structure on the leaf space}), if $\dim(G_x)$ is constant then the action defines a foliation of $\mathcal{O}$. Notice that an action respects the canonical stratification of $\mathcal{O}$.
\end{example}


A \textit{Riemannian metric} on an orbifold $\mathcal{O}$ is a symmetric positive tensor field $\mathrm{g}\in\bigotimes_0^2(\mathcal{O})$, which corresponds to an $\mathscr{H}_{\mathcal{O}}$ -invariant Riemannian metric on $U_{\mathcal{O}}$. The Levi-Civita connection $\nabla$ on $U_\mathcal{O}$ is invariant by the changes of charts and thus can be seen as a covariant derivative on $T\mathcal{O}$. The \textit{curvature tensor} of $\mathcal{O}$ is the curvature tensor of $\nabla$ on $U_{\mathcal{O}}$. Derived curvature notions, such as sectional and Ricci curvatures (which we denote $\sec_{\mathcal{O}}$ and $\ric_{\mathcal{O}}$, respectively), are defined accordingly. On a Riemannian orbifold one can define the length of a piecewise smooth curve in the usual fashion and induce a length structure $d$ on $|\mathcal{O}|$, in complete analogy to the manifold case.

We end this section establishing orbifold generalizations of two classical theorems which will be useful later, the first one being an orbifold version of Bochner's theorem on Killing vector fields. Let $(\mathcal{O},\mathrm{g})$ be a Riemannian orbifold and $X\in\mathfrak{iso}(\mathcal{O})=\{X\in\mathfrak{X}\ |\ \mathcal{L}_X\mathrm{g}=0\}$ be a Killing vector field. Consider $f:=\mathrm{g}(X,X)/2=\|X\|^2/2$. Then it follows exactly as in the manifold case (see, for instance, \cite[Proposition 29, page 191]{petersen}), that
$$\Delta f=\|\nabla X\|^2-\ric_{\mathcal{O}}(X),$$
where $\|\cdot\|$ denotes the Frobenius norm. We say that $\mathcal{O}$ has \textit{quasi-negative Ricci curvature} if $\ric_{\mathcal{O}}\leq0$ and, for some point $x\in|\mathcal{O}|$, one has $\ric_{\mathcal{O}}(v)<0$ for all $v\in T_x\mathcal{O}\setminus\{0\}$.

\begin{theorem}[Bochner's theorem for orbifolds]\label{teo: bochner for orbifolds}
Let $(\mathcal{O},\mathrm{g})$ be a connected, compact, oriented Riemannian orbifold with $\ric_{\mathcal{O}}\leq0$. Then every Killing vector field on $\mathcal{O}$ is parallel, and $\dim\mathrm{Iso}(\mathcal{O})\leq \dim\mathcal{O}_{\mathrm{min}}$. Moreover, if $\mathcal{O}$ has quasi-negative Ricci curvature, then $\mathrm{Iso}(\mathcal{O})$ is finite.
\end{theorem}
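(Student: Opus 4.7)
The strategy is the classical Bochner argument, carried out in the chart atlas $(U_{\mathcal{O}},\mathscr{H}_{\mathcal{O}})$. First I would integrate the displayed identity $\Delta f = \|\nabla X\|^2 - \ric_{\mathcal{O}}(X)$ over $\mathcal{O}$. Because $\mathcal{O}$ is compact and oriented, Stokes' theorem (either applied chartwise via an $\mathscr{H}_{\mathcal{O}}$-invariant partition of unity, or directly in Satake's sense) yields $\int_{\mathcal{O}} \Delta f \, d\vol = 0$. Since $\ric_{\mathcal{O}}\leq 0$ and $\|\nabla X\|^{2}\geq 0$, the two summands on the right are pointwise nonnegative, so each must vanish identically. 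This gives $\nabla X \equiv 0$ (i.e.\ $X$ is parallel) and $\ric_{\mathcal{O}}(X)\equiv 0$ on $|\mathcal{O}|$.

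For the dimension bound, I would use that the flow of $X$ consists of isometries and therefore preserves the canonical stratification, forcing $X$ to be tangent to every stratum. Fixing $x \in \mathcal{O}_{\mathrm{min}}$ and letting $\Sigma$ denote the stratum through $x$, the evaluation $X \mapsto X(x)$ lands in $T_{x}\Sigma$; since parallel sections on the connected orbifold $\mathcal{O}$ are determined by a single value, this map is injective. Hence $\dim \mathfrak{iso}(\mathcal{O}) \leq \dim T_{x}\Sigma = \dim \mathcal{O}_{\mathrm{min}}$, which translates into the claimed bound on $\dim \mathrm{Iso}(\mathcal{O})$ via the orbifold Myers--Steenrod theorem, which makes $\mathrm{Iso}(\mathcal{O})$ a compact Lie group with Lie algebra $\mathfrak{iso}(\mathcal{O})$.

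For the quasi-negative case, at the prescribed point $x$ where $\ric_{\mathcal{O}}(v) < 0$ for every nonzero $v \in T_{x}\mathcal{O}$, the pointwise identity $\ric_{\mathcal{O}}(X)(x) = 0$ forces $X(x) = 0$; parallelism then propagates this to $X \equiv 0$. Consequently $\mathfrak{iso}(\mathcal{O}) = 0$, so $\mathrm{Iso}(\mathcal{O})$ is a compact zero-dimensional Lie group, i.e.\ finite.

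The main obstacle I anticipate is checking that the three classical tools invoked above---the divergence theorem, the Lie group structure on $\mathrm{Iso}(\mathcal{O})$, and propagation of parallel sections across the singular locus---carry over verbatim from manifolds to orbifolds. All three are available in the orbifold literature; propagation of parallelism, in particular, is handled chartwise by lifting $X$ to a parallel $H_{x}$-equivariant vector field on $\widetilde{U}_{x}$, whose vanishing at one point implies vanishing throughout the chart, so the usual connectedness argument still concludes the proof.
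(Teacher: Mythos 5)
Your proposal is correct and follows essentially the same route as the paper: integrate the Bochner identity $\Delta f=\|\nabla X\|^2-\ric_{\mathcal{O}}(X)$ over the compact oriented orbifold to get $\nabla X\equiv 0$ and $\ric_{\mathcal{O}}(X)\equiv 0$, then use injectivity of the evaluation map at a point of $\mathcal{O}_{\mathrm{min}}$ for the dimension bound, and the quasi-negativity point to kill $\mathfrak{iso}(\mathcal{O})$. The only cosmetic difference is that you force $X(x)\in T_x\mathcal{O}_{\mathrm{min}}$ via the flow of $X$ preserving the stratification, whereas the paper argues directly that the value of any vector field at $x$ must be $\Gamma_x$-invariant, i.e.\ lie in $(T_x\mathcal{O})^{\Gamma_x}=T_x\mathcal{O}_{\mathrm{min}}$; both are valid.
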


\begin{proof}
The proof is analogous to that of the classical Bochner theorem for manifolds (see, e.g., \cite[Theorem 36]{petersen}). Let $X$ be a Killing vector field on $\mathcal{O}$. For $f:=\frac{1}{2}\|X\|^2$ we have, by Stokes' theorem and the hypothesis on the curvature, that
\begin{equation}\label{equation: bochner for orbifolds}0=\int_{\mathcal{O}}\Delta f dV=\int_{\mathcal{O}}\left(\|\nabla X\|^2-\ric_{\mathcal{O}}(X)\right) dV\geq\int_{\mathcal{O}}\|\nabla X\|^2dV\geq0.\end{equation}
Therefore $\|\nabla X\|\equiv 0$, hence $X$ is parallel. Now, since each Killing vector field is parallel,
$$e_x:\mathfrak{iso}(\mathcal{O})\ni X\longmapsto X_x\in T_x\mathcal{O}$$ is injective, for any $x\in\mathcal{O}$. Choose $x\in\mathcal{O}_{\mathrm{min}}$. Then $T_x\mathcal{O}=T_x\mathcal{O}_{\mathrm{min}}\oplus \nu_x\mathcal{O}_{\mathrm{min}}$, where $T_x\mathcal{O}_{\mathrm{min}}=(T_x\mathcal{O})^{\Gamma_x}$. Since $\Gamma_x$ acts non-trivially on $\nu_x\mathcal{O}_{\mathrm{min}}$, vectors in $\nu_x\mathcal{O}_{\mathrm{min}}\setminus\{0\}$ do not admit extensions to vector fields. Hence we must have $e_x(\mathfrak{iso}(\mathcal{O}))\subset T_x\mathcal{O}_{\mathrm{min}}$, and thus $\dim\mathrm{Iso}(\mathcal{O})\leq \dim\mathcal{O}_{\mathrm{min}}$.

Now assume $\mathcal{O}$ has quasi-negative Ricci curvature and $X$ is not trivial. Then, since $X$ is parallel, it vanishes nowhere. In particular, if $x\in|\mathcal{O}|$ is the point where $\ric_{\mathcal{O}}$ is negative, then $\ric_{\mathcal{O}}(X_x)<0$. But equation \eqref{equation: bochner for orbifolds} now reads 
$$\int_{\mathcal{O}}-\ric_{\mathcal{O}}(X) dV=0,$$
therefore $\ric_{\mathcal{O}}(X)\equiv 0$, a contradiction. Thus $X\equiv 0$, yielding that $\mathrm{Iso}(\mathcal{O})$ is finite, since it is a compact Lie group whose identity component is trivial.
\end{proof}

We are now interested in an orbifold version Milnor's theorem on the growth of the fundamental group of negatively curved manifolds \cite[Theorem 2]{milnor}. Recall for this that, for a finitely generated group $\Gamma=\langle S \rangle$, with $S=\{g_1,\dots,g_k\}$, the $S$-growth function of $\Gamma$ associates to $j\in\mathbb{N}$ the number $\#_S(j)$ of distinct elements of $\Gamma$ which can be expressed as words of length at most $j$ in the alphabet $\{g_1,\dots,g_k,g_1^{-1},\dots,g_k^{-1}\}$. We say that $\Gamma$ has \textit{exponential growth} when $\#_S(j)\geq \alpha^j$ for some $\alpha>1$. This property is independent of the set $S$ of generators \cite[Lemma 1]{milnor}.

\begin{theorem}[Milnor's theorem for negatively curved orbifolds]\label{theorem: Milnor growth for orbifolds}
Let $\mathcal{O}$ be a compact Riemannian orbifold with $\sec_{\mathcal{O}}<0$. Then $\piorb(\mathcal{O})$ has exponential growth.
\end{theorem}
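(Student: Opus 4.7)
Since $\mathcal{O}$ is compact, strict negativity of the sectional curvature gives a constant $a>0$ with $\sec_{\mathcal{O}}\leq -a^2$. Lifting the metric to the orbifold universal cover $p:\widetilde{\mathcal{O}}\to\mathcal{O}$ produces a complete, simply connected Riemannian orbifold with $\sec_{\widetilde{\mathcal{O}}}\leq -a^2$, on which $\Gamma:=\piorb(\mathcal{O})$ acts by deck isometries, properly discontinuously and with compact quotient $\mathcal{O}$. The first step is to promote $\widetilde{\mathcal{O}}$ to an actual manifold: for any $\tilde{x}\in\widetilde{\mathcal{O}}$ the exponential map descends to $\exp_{\tilde{x}}:T_{\tilde{x}}\widetilde{\mathcal{O}}/\Gamma_{\tilde{x}}\to\widetilde{\mathcal{O}}$; the Cartan--Hadamard theorem, which extends to orbifolds without serious changes, implies that this is an orbifold covering, and since $T_{\tilde{x}}\widetilde{\mathcal{O}}$ is contractible the source has orbifold fundamental group $\Gamma_{\tilde{x}}$. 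Simple connectedness of $\widetilde{\mathcal{O}}$ then forces $\Gamma_{\tilde{x}}=1$ for every $\tilde{x}$, so $\widetilde{\mathcal{O}}$ is a manifold (although $\Gamma$ need not act freely, since $\mathcal{O}$ may have singular points).

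The argument now follows Milnor's classical template. Fix a basepoint $\tilde{x}_0\in\widetilde{\mathcal{O}}$ and a relatively compact measurable fundamental set $F\ni\tilde{x}_0$ for $\Gamma$, with $\delta:=\diam(F)<\infty$. Letting $U$ be a slightly larger open neighborhood of $\overline{F}$, proper discontinuity makes $S:=\{g\in\Gamma\setminus\{1\}:gU\cap U\neq\emptyset\}$ finite, and connectedness of $|\mathcal{O}|$ guarantees that $S$ generates $\Gamma$. A Lebesgue-number argument on the compact quotient yields $L>0$ such that every open ball of radius $L$ in $\widetilde{\mathcal{O}}$ is contained in some translate $gU$. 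Sampling a minimizing geodesic from $\tilde{x}_0$ to $g\tilde{x}_0$ at spacing $L$ therefore produces a chain of successive translates $g_0=1,g_1,\dots,g_m=g$ with $g_i^{-1}g_{i+1}\in S$, so that
\begin{equation*}
|g|_S \,\leq\, \frac{d(\tilde{x}_0,g\tilde{x}_0)}{L}+1 \,\leq\, C_1\, d(\tilde{x}_0,g\tilde{x}_0)+C_2.
\end{equation*}

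The final ingredient is the Bishop--G\"unther volume comparison in $\widetilde{\mathcal{O}}$: for $n:=\dim\mathcal{O}$ and $r$ large,
\begin{equation*}
\vol(B_r(\tilde{x}_0)) \,\geq\, C_3\, e^{(n-1)ar}.
\end{equation*}
The essentially disjoint translates $\{gF:g\in\Gamma,\,d(\tilde{x}_0,g\tilde{x}_0)\leq r\}$ cover $B_{r-\delta}(\tilde{x}_0)$, so the number of such $g$ is at least $\vol(B_{r-\delta})/\vol(F)\geq C_4\,e^{(n-1)ar}$. Combining with the word-length bound above gives $\#_S(j)\geq C_5\,\alpha^j$ for $\alpha:=\exp((n-1)a/C_1)>1$, which is the required exponential growth; since this property is independent of the chosen generating set by \cite[Lemma 1]{milnor}, the result follows. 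I expect the main obstacle to be the orbifold Cartan--Hadamard step justifying that $\widetilde{\mathcal{O}}$ is a manifold; alternatively, one can bypass this reduction and verify that Bishop--G\"unther and the covering count survive directly on the orbifold, using that the orders of the local groups are uniformly bounded on the compact quotient and only affect the comparison constants.
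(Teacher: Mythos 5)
Your argument is correct and follows essentially the same route as the paper: pass to the orbifold universal cover, which is a Hadamard manifold on which $\piorb(\mathcal{O})$ acts isometrically, properly discontinuously and cocompactly, then combine the word-length versus displacement comparison (\v{S}varc--Milnor, which you reprove by hand) with G\"unther's exponential volume lower bound, exactly as in Milnor's original proof. The orbifold Cartan--Hadamard step you flag as the main obstacle is precisely what the paper handles by citing Bridson--Haefliger, so your sketch of it is adequate.
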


\begin{proof}
The proof is essentially the same as Milnor's proof for the manifold case (see \cite{milnor}, cf. also \cite[Proposition 10]{borzellino4} where it is proven that $\piorb$ has polynomial growth when Ricci curvature is nonnegative). In fact, the universal covering of $\mathcal{O}$ is a Hadamard manifold $M$ on which $\piorb(\mathcal{O})$ acts isometrically and properly discontinuously (see \cite[Corollary 2.16]{bridson}). By the Švarc–Milnor lemma (see, e.g., \cite[Corollary 5.4.2]{loh}), $M$ is quasi-isometric to the Cayley graph of $\piorb(\mathcal{O})$ (with respect to any set of generators $S$) endowed with the word metric. Now one uses Günther's inequality to compare $\#_S$ to the volume growth on $M$ (see \cite[pp. 3--4]{milnor}), obtaining the result.
\end{proof}

\subsection{Riemannian and Killing foliations}\label{section: killing foliations}

A \textit{transverse metric} for a smooth foliation $(M,\f)$ is a symmetric, positive, basic $(2,0)$-tensor field $\mathrm{g}^T$ on $M$. In this case $(M,\f,\mathrm{g}^T)$ is called a \textit{Riemannian foliation}. A Riemannian metric $\mathrm{g}$ on $M$ is \textit{bundle-like} for $\f$ if for any open set $U$ and any vector fields $Y,Z\in\mathfrak{L}(\f|_U)$ that are perpendicular to the leaves, $\mathrm{g}(Y,Z)\in\Omega^0(\f|_U)$. Any bundle-like metric $\g$ determines a transverse metric by $\mathrm{g}^T(X,Y):=\mathrm{g}(X^\bot,Y^\bot)$ with respect to the decomposition $TM=T\f\oplus T\f^\perp$. Conversely, given $\mathrm{g}^T$ one can always choose a bundle-like metric on $M$ that induces it \cite[Proposition 3.3]{molino}. With a chosen bundle-like metric, we make the identification $\nu\f\equiv T\f^\perp$. A (local) transverse vector field $X$ is a \textit{transverse Killing vector field} if $\mathcal{L}_X\mathrm{g}^T=0$. The space of global $\f$-transverse Killing vector fields will be denoted by $\mathfrak{iso}(\f)$.

\begin{example}\label{exe: foliated actions}
If a foliation $\f$ on $M$ is given by a foliated action of a Lie group $G$ and $\mathrm{g}$ is a Riemannian metric on $M$ such that $G$ acts by isometries, then $\mathrm{g}$ is bundle-like for $\f$ \cite[Remark 2.7(8)]{mrcun}. In other words, a foliation induced by an isometric action is Riemannian.
\end{example}

It follows from the definition that $\mathrm{g}^T$ projects to Riemannian metrics on the local quotients $T_i$ of a Haefliger cocycle $\{(U_i,\pi_i,\gamma_{ij})\}$ defining $\f$. The holonomy pseudogroup $\mathscr{H}_\f$ then becomes a pseudogroup of local isometries of $T_\f$ and, with respect to a a bundle-like metric, the submersions defining $\f$ become Riemannian submersions. 

A Riemannian foliation $(M,\f)$ is \textit{complete} when $M$ is a complete Riemannian manifold with respect to some bundle-like metric for $\f$. The basic cohomology of a complete Riemannian foliation is finite dimensional, provided $M/\overline{\f}$ is compact (see, for instance, \cite[Proposition 3.11]{goertsches}). In particular $\chi(\f)$ is always defined in this case.




It follows from Molino's structural theorems (see \cite[Theorems 5.1 and 5.2]{molino}) that if $(M,\f)$ is a complete Riemannian foliation, then the partition $\overline{\f}$ is a singular Riemannian foliation of $M$ and there is a locally constant sheaf of Lie algebras of transverse Killing vector fields $\mathscr{C}_{\f}$ whose orbits are the closures of the leaves of $\f$, in the sense that
$$\{X_x\ |\ X\in(\mathscr{C}_{\f})_x\}\oplus T_xL_x=T_x\overline{L_x}.$$
The typical stalk $\mathfrak{g}_{\f}$ of $\mathscr{C}_{\f}$ is an important algebraic invariant called the \textit{structural Lie algebra} of $\f$.

We will be mostly interested in complete Riemannian foliations that have a \textit{globally} constant Molino sheaf, the so called \textit{Killing foliations}. In other words, $\f$ is a Killing foliation when there exists $\overline{X}_1,\dots,\overline{X}_d\in\mathfrak{iso}(\f)$ such that $T\overline{\f}=T\f\oplus\langle X_1,\dots, X_d \rangle$. It follows from Molino's theory that in this case $\mathscr{C}_{\f}(M)$ is central in $\mathfrak{l}(\f)$, hence the structural algebra of a Killing foliation is Abelian. For this reason we will also denote $\mathfrak{a}=\mathfrak{g}_{\f}$ when $\f$ is Killing (and it is understood from the context that we are referring to the structural algebra of $\f$).

A complete Riemannian foliation $\f$ of a simply-connected manifold is a Killing foliation \cite[Proposition 5.5]{molino}, since in this case $\mathscr{C}_{\f}$ cannot have holonomy. Homogeneous Riemannian foliations provide another relevant class of Killing foliations.

\begin{example}\label{example: homogeneous foliations are killing}
If $\f$ is a Riemannian foliation of a complete manifold $M$ given by the connected components of the orbits of a locally free action of $H<\mathrm{Iso}(M)$, then $\f$ is a Killing foliation and $\mathscr{C}_{\f}(M)$ consists of the transverse Killing vector fields induced by the action of $\overline{H}\subset\mathrm{Iso}(M)$ (see \cite[Lemme III]{molino3}).
\end{example}

\section{Equivariant basic cohomology}

In this section we recall the equivariant basic cohomology of a Killing foliation introduced in \cite{goertsches} and prove some facts that will be needed later. Let us begin by recalling the language of $\mathfrak{g}^\star$-algebras, which provides a purely algebraic setting for equivariant cohomology. We refer to \cite{guillemin} for a thorough introduction of this topic.

\subsection{$\mathfrak{g}^\star$-algebras}

Let $(A, d)$ be a differential $\mathbb{Z}$-graded commutative algebra and $\mathfrak{g}$ a finite-dimensional Lie algebra. We say that $A$ is a \textit{$\mathfrak{g}^\star$-algebra} if, for each $X\in\mathfrak{g}$, there are derivations $\mathcal{L}_X:A\to A$ and $\iota_X:A\to A$, of degree $0$ and $-1$ respectively, such that
$$\iota_X^2=0,\ \ \ \ [\mathcal{L}_X,\mathcal{L}_Y]=\mathcal{L}_{[X,Y]},\ \ \ \ [\mathcal{L}_X,\iota_Y]=\iota_{[X,Y]},\ \ \ \ \mbox{and}\ \ \ \ \mathcal{L}_X=d \iota_X+\iota_X d.$$
If $A$ and $B$ are $\mathfrak{g}^\star$-algebras, an algebra morphism $f:A\to B$ is a morphism of $\mathfrak{g}^\star$-algebras if it commutes with $d$, $\mathcal{L}_X$ and $\iota_X$.

An infinitesimal action of $\mathfrak{g}$ on a pseudogroup of local diffeomorphisms $(T,\mathscr{H})$ is a Lie algebra homomorphism $\mu:\mathfrak{g}\to \mathfrak{X}(\mathscr{H}):=\mathfrak{X}(T)^{\mathscr{H}}$.

\begin{proposition}
An infinitesimal action $\mu:\mathfrak{g}\to \mathfrak{X}(\mathscr{H})$ induces a $\mathfrak{g}^\star$-algebra structure on $\Omega(\mathscr{H}):=\Omega(T)^{\mathscr{H}}$ with the usual operators $d$, $\mathcal{L}_X$ and $\iota_X$.
\end{proposition}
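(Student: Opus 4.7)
The plan is to transport the standard Cartan calculus on $\Omega(T)$ to the subalgebra $\Omega(\mathscr{H})$ of $\mathscr{H}$-invariant forms. For each $X\in\mathfrak{g}$, set
$$
\mathcal{L}_X:=\mathcal{L}_{\mu(X)}, \qquad \iota_X:=\iota_{\mu(X)},
$$
with $d$ the ordinary exterior derivative on $T$. On $\Omega(T)$ the required identities $\iota_X^2=0$, $\mathcal{L}_X=d\iota_X+\iota_X d$, $[\mathcal{L}_X,\iota_Y]=\iota_{[\mu(X),\mu(Y)]}$ and $[\mathcal{L}_X,\mathcal{L}_Y]=\mathcal{L}_{[\mu(X),\mu(Y)]}$ are classical Cartan formulas for vector fields on a manifold; linearity in $X$ and the bracket relations on $\mathfrak{g}$ (rather than on $\mathfrak{X}(T)$) follow from the hypothesis that $\mu$ is a Lie algebra homomorphism, so that $[\mu(X),\mu(Y)]=\mu([X,Y])$. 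So the content of the proposition is entirely that these operators preserve the subspace $\Omega(\mathscr{H})\subset\Omega(T)$.

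The key step is thus checking invariance. Let $h\in\mathscr{H}$ be a local diffeomorphism with domain $U$ and image $V$. The hypothesis $\mu(X)\in\mathfrak{X}(\mathscr{H})$ says $h_\ast(\mu(X)|_U)=\mu(X)|_V$, equivalently $h^\ast(\mu(X)|_V)=\mu(X)|_U$ under the pullback of vector fields by the diffeomorphism $h$. Given $\omega\in\Omega(\mathscr{H})$, i.e.\ $h^\ast(\omega|_V)=\omega|_U$, I would use the naturality identity $h^\ast(\iota_W\eta)=\iota_{h^\ast W}(h^\ast\eta)$, valid for diffeomorphisms, to compute
$$
h^\ast\bigl((\iota_{\mu(X)}\omega)|_V\bigr)=\iota_{h^\ast(\mu(X)|_V)}\bigl(h^\ast(\omega|_V)\bigr)=\iota_{\mu(X)|_U}(\omega|_U)=(\iota_{\mu(X)}\omega)|_U,
$$
showing that $\iota_X\omega\in\Omega(\mathscr{H})$. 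The analogous computation with $\mathcal{L}_W$ in place of $\iota_W$, using $h^\ast\mathcal{L}_W=\mathcal{L}_{h^\ast W}h^\ast$, gives invariance of $\mathcal{L}_X\omega$; for the exterior derivative the invariance $h^\ast(d\omega|_V)=d(h^\ast\omega|_V)=d\omega|_U$ is immediate from the naturality of $d$ under any smooth map.

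With invariance established, the $\mathfrak{g}^\star$-algebra axioms are inherited verbatim from their validity on $\Omega(T)$, together with the Lie homomorphism property of $\mu$. The derivations are of the correct degrees (0 for $\mathcal{L}_X$, $-1$ for $\iota_X$), and linearity of $X\mapsto\iota_X$, $X\mapsto\mathcal{L}_X$ follows from linearity of $\mu$. The only nontrivial point is thus the pullback computation above; no real obstacle arises, since the $\mathscr{H}$-invariance of $\mu(X)$ is exactly the hypothesis one needs to commute pullback past $\iota$ and $\mathcal{L}$.
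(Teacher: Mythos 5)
Your proposal is correct and follows essentially the same route as the paper's proof: the invariance of $\Omega(\mathscr{H})$ under $d$, $\iota_{\mu(X)}$ and $\mathcal{L}_{\mu(X)}$ is checked via naturality of these operators under pullback by elements of $\mathscr{H}$, using that $\mu(X)$ is $\mathscr{H}$-invariant, and the $\mathfrak{g}^\star$-algebra identities are then inherited from the standard Cartan calculus on $\Omega(T)$ together with the homomorphism property of $\mu$.
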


\begin{proof}
Let $\gamma\in\mathscr{H}$, $X\in\mathfrak{X}(\mathscr{H})$ and $\omega\in\Omega(\mathscr{H})$. From naturality of $d$ we have $\gamma^*(d\omega)=d(\gamma^*\omega)=d\omega$ and, denoting $\gamma^*(X)=\dif \gamma^{-1}\circ X\circ\gamma$, we have $\gamma^*(\iota_X\omega)=\iota_{\gamma^*X}(\gamma^*\omega)=\iota_X\omega$ and $\gamma^*(\mathcal{L}\omega)=\mathcal{L}_{\gamma^*X}(\gamma^*\omega)=\mathcal{L}_X\omega$.
\end{proof}

We will say that an equivalence $\Phi$ between $(T,\mathscr{H})$ and another pseudogroup $(S,\mathscr{K})$ with an infinitesimal $\mathfrak{g}$-action $\nu$ is \textit{$\mathfrak{g}$-equivariant} if $\varphi^*(\nu(X))=\mu(X)$ for all $X\in\mathfrak{g}$ and $\varphi\in\Phi$. In this case $\Phi^*:\Omega(\mathscr{K})\to\Omega(\mathscr{H})$ is an isomorphism of $\mathfrak{g}^\star$-algebras.

\begin{example}
An infinitesimal action $\mu:\mathfrak{g}\to \mathfrak{X}(\mathcal{O})$ of $\mathfrak{g}$ on an orbifold $\mathcal{O}$ induces a $\mathfrak{g}^\star$-algebra structure on $\Omega(\mathcal{O})$.
\end{example}

A $\mathfrak{g}^\star$-algebra $A$ is \textit{acyclic} if $H(E,d)=\mathbb{R}$. Also, $A$ is said to be \textit{locally free}, or to be \textit{of type (C)}, if it admits a \textit{connection}, that is, an invariant element $\theta\in A_1\otimes\mathfrak{g}$ satisfying $\iota_X\theta=X$ for all $X\in\mathfrak{g}$. Equivalently, $A$ is locally free when there are $\theta^i\in A_1$ such that $\iota_{X_j}\theta^i=\delta^i_j$, for some basis $\{X_i\}$ of $\mathfrak{g}$, and
$$C=\spannn\{\theta^1,\dots,\theta^{\dim\mathfrak{g}}\}$$
is $\mathfrak{g}$-invariant.

\begin{proposition}\label{prop: isometric action on orbifold is of type C}
If a connected, compact Lie group $G$ acts on an orbifold $\mathcal{O}$ and $H<G$ is a Lie subgroup whose action is locally free, then $\Omega(\mathcal{O})$ is locally free as an $\mathfrak{h}^\star$-algebra.
\end{proposition}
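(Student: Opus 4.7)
The plan is to construct an explicit connection $\theta \in \Omega^1(\mathcal{O})\otimes\mathfrak{h}$ for the locally free $H$-action on $\mathcal{O}$, in direct analogy with the classical construction of a principal connection on a principal $H$-bundle. Compactness of $G$ enters only through averaging: I would first average an arbitrary Riemannian orbifold metric over $G$ to produce a $G$-invariant (hence $H$-invariant) metric $g$ on $\mathcal{O}$, which upstairs is an $\mathscr{H}_{\mathcal{O}}$-invariant metric on $U_{\mathcal{O}}$.

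Next, I fix a basis $\{X_1,\dots,X_k\}$ of $\mathfrak{h}$ and let $X_i^*$ denote the fundamental vector fields on $\mathcal{O}$ induced by the infinitesimal action. Local freeness of $H$ means the evaluation map $\mathfrak{h}\to T_x\mathcal{O}$, $X\mapsto X_x^*$, is injective at every $x$, so the $X_i^*$ are pointwise linearly independent and span a vertical distribution $V$ of rank $k$. Using $g$, I take the orthogonal decomposition $T\mathcal{O}=V\oplus V^\perp$ and define $\theta^i\in\Omega^1(\mathcal{O})$ to be the $1$-form that vanishes on $V^\perp$ and satisfies $\theta^i(X_j^*)=\delta^i_j$; explicitly, $\theta^i=\sum_j (g^V)^{-1}_{ij}\,\iota_{X_j^*}g$, where the Gram matrix $(g^V)_{ij}=g(X_i^*,X_j^*)$ is invertible at every point. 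Setting $\theta:=\sum_i\theta^i\otimes X_i$, the identity $\iota_X\theta=X$ for $X\in\mathfrak{h}$ is then immediate.

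The remaining—and main—step will be to verify that $\theta$ is $\mathfrak{h}$-invariant, equivalently that $C:=\spannn\{\theta^1,\dots,\theta^k\}$ is closed under each $\mathcal{L}_{X_j^*}$. Here the $H$-invariance of $g$ is crucial: the flow of any fundamental vector field $Y^*$ preserves the orthogonal splitting $V\oplus V^\perp$ and sends fundamental vector fields to fundamental vector fields via the adjoint representation, $(\phi_h)_*X^*=(\Ad(h^{-1})X)^*$. Differentiating the resulting equivariance $\phi_h^*\theta=\Ad(h^{-1})\theta$ at $h=e$ gives $\mathcal{L}_{Y^*}\theta=-\ad(Y)\theta$, which is precisely the invariance of $\theta$ under the diagonal $\mathfrak{h}$-action on $\Omega^1(\mathcal{O})\otimes\mathfrak{h}$; in coordinates, $\mathcal{L}_{X_j^*}\theta^i=-\sum_l c^i_{jl}\theta^l$ with $[X_j,X_l]=\sum_i c^i_{jl}X_i$, so $C$ is preserved. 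This shows $\theta$ is a connection, and hence $\Omega(\mathcal{O})$ is locally free as an $\mathfrak{h}^\star$-algebra. The only genuinely orbifold-specific subtlety is the averaging that produces $g$, which is carried out chart by chart on $U_{\mathcal{O}}$ in the standard way.
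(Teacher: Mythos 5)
Your construction is correct and is essentially the same as the paper's proof: choose a $G$-invariant metric (available by compactness), split $T\mathcal{O}$ into the orbit distribution of the locally free $H$-action and its orthogonal complement, and define the $\theta^i$ dual to a basis of fundamental vector fields and vanishing on the complement. The only difference is that you spell out the $\mathfrak{h}$-invariance of $\theta$ via the adjoint equivariance $\mathcal{L}_{X_j^*}\theta^i=-\sum_l c^i_{jl}\theta^l$, a verification the paper leaves implicit in the statement that the splitting is $H$-invariant.
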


\begin{proof}
Since $G$ is compact we can choose a $G$-invariant Riemannian metric on $\mathcal{O}$, and since the $H$-action is locally free the fundamental vector fields of the induced infinitesimal $\mathfrak{h}$-action $\mu$ are nowhere vanishing. This gives us an $H$-invariant splitting $T\mathcal{O}= T\f_H\oplus (T\f_H)^\perp$, where $T\f_H=\spannn(\mu(\mathfrak{h}))$ is the subbundle given by the tangent spaces of the $H$-orbits (see also \cite[Lemma 2.11]{galazgarcia}). Picking a basis $\{X_i\}$ of $\mathfrak{h}$ and defining $\theta^i\in\Omega^1(\mathcal{O})$ by $\iota_{X_j}\theta^i=\delta^i_j$ and $\theta^i|_{\nu\f_H}=0$ we obtain the desired connection.
\end{proof}

The \textit{basic complex} of a $\mathfrak{g}^\star$-algebra $A$ is
$$A_{\bas\mathfrak{g}}:=\{\omega\in A\ |\ \iota_X\omega=0\ \mbox{and}\  \mathcal{L}_X\omega=0\ \mbox{for all}\ X\in\mathfrak{g}\}.$$
It is easy to check that $A_{\bas\mathfrak{g}}$ is $d$-invariant. If $A$ is locally free then
\begin{equation}\label{eq free then ec is bc}H_{\mathfrak{g}}(A)=H(A_{\bas\mathfrak{g}}),\end{equation}
which is a generalization of the fact that, for a free Lie group action, the equivariant cohomology is the cohomology of the quotient (see, for instance, \cite[Section 5.1]{guillemin}). This leads one to define the \textit{Weil model for the equivariant cohomology of $A$} as
$$H_{\mathfrak{g}}(A):=H((E\otimes A)_{\bas\mathfrak{g}},d),$$
where $E$ is any locally free acyclic $\mathfrak{g}^\star$-algebra (see \cite[Section 2.4]{guillemin} for details). The analogy with classical equivariant cohomology is that $E$ replaces the classifying $G$-space and basic cohomology replaces the cohomology of the quotient.

There is an alternative model for the equivariant cohomology of a $\mathfrak{g}^\star$-algebra $A$ due to H.~Cartan which will be useful. Consider the \textit{Cartan complex}
$$C_{\mathfrak{g}}(A):=(\sym(\mathfrak{g}^*)\otimes A)^{\mathfrak{g}},$$
where $\sym(\mathfrak{g}^*)$ is the symmetric algebra over $\mathfrak{g}^*$ and the superscript indicates the subspace of $\mathfrak{g}$-invariant elements, that is, those $\omega\in\sym(\mathfrak{g}^*)\otimes A$ such that $\mathcal{L}_X\omega=0$ for all $X\in\mathfrak{g}$. Notice that we can identify an element $\omega\in C_{\mathfrak{g}}(A)$ with a $\mathfrak{g}$-equivariant polynomial map $\omega:\mathfrak{g}\to A$ (see \cite[p. 53]{guillemin}). The \textit{equivariant differential} $d_\mathfrak{g}$ of the Cartan complex is then defined as
$$(d_\mathfrak{g}\omega)(X)=d(\omega(X))-\iota_X(\omega(X)).$$
It is a derivation of degree $1$ with respect to the usual grading $C_{\mathfrak{g}}^n(A)=\bigoplus_{2k+l=n}(\sym_k(\mathfrak{g}^*)\otimes A_l)^{\mathfrak{g}}$.

The \textit{Cartan model for the equivariant cohomology of $A$} is
$$H_{\mathfrak{g}}(A):=H(C_{\mathfrak{g}}(A),d_\mathfrak{g}).$$
There is a natural structure of $\sym(\mathfrak{g}^*)^{\mathfrak{g}}$-algebra on $H_{\mathfrak{g}}(A)$ induced by $\sym(\mathfrak{g}^*)^{\mathfrak{g}}\ni f\mapsto f\otimes 1\in C_{\mathfrak{g}}(A)$. When $A=\Omega(\mathcal{O})$, for an orbifold $\mathcal{O}$, this structure coincides with the one induced by the map $\mathcal{O}\to\{\mathrm{point}\}$ by pullback. By abuse of notation we denote both the Weil and the Cartan model by $H_{\mathfrak{g}}(A)$, because they are isomorphic via the Mathai--Quillen--Kalkman isomorphism (see \cite[Theorem 4.2.1]{guillemin}).

We end this section with an equivariant version of \eqref{eq free then ec is bc} known as the ``commuting actions principle'', which will be a useful tool.

\begin{proposition}[{\cite[Proposition 3.9]{goertsches}}, {\cite[Proposition A.6.3]{lin}}]\label{prop: commuting actions principle}
Let $\mathfrak{g}=\mathfrak{h}\times\mathfrak{k}$ be a product of Lie algebras and let $A$ be an $(\mathfrak{h}\times\mathfrak{k})^\star$-algebra which is locally free as an $\mathfrak{h}^\star$-algebra, admitting a $\mathfrak{k}$-invariant connection. Then the inclusion $j:(\sym(\mathfrak{k}^*)\otimes A_{\bas\mathfrak{h}})^{\mathfrak{k}}\to (\sym(\mathfrak{g}^*)\otimes A)^{\mathfrak{g}}$ induces an $\sym(\mathfrak{k}^*)^{\mathfrak{k}}$-algebra isomorphism
$$H_{\mathfrak{k}}(A_{\bas\mathfrak{h}})\cong H_{\mathfrak{g}}(A).$$
\end{proposition}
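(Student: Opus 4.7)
My plan is to verify that $j$ is a chain map (straightforward), and then use the $\mathfrak{k}$-invariant $\mathfrak{h}$-connection $\theta$ to build an explicit homotopy inverse, via a $\mathfrak{k}$-equivariant version of the Mathai--Quillen--Kalkman construction that underlies the base case \eqref{eq free then ec is bc}.

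For the chain-map check, an element $\omega \in (\sym(\mathfrak{k}^*)\otimes A_{\bas\mathfrak{h}})^{\mathfrak{k}}$ is automatically $\mathfrak{h}$-invariant, since its $A$-factor lies in $A_{\bas\mathfrak{h}}$ and $\mathfrak{h}$ acts trivially on the $\sym(\mathfrak{k}^*)$-factor; so $j(\omega)$ indeed lands in $(\sym(\mathfrak{g}^*)\otimes A)^{\mathfrak{g}}$. Evaluating $d_\mathfrak{g} j(\omega)$ on $(X,Y)\in \mathfrak{h}\times\mathfrak{k}$ gives $d\omega(Y)-\iota_X\omega(Y)-\iota_Y\omega(Y)$; the middle term vanishes because $\omega(Y)\in A_{\bas\mathfrak{h}}$, so the result agrees with $j(d_\mathfrak{k}\omega)$. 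Compatibility with the $\sym(\mathfrak{k}^*)^{\mathfrak{k}}$-algebra and product structures is automatic because $j$ is just the inclusion of tensor factors.

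For the quasi-isomorphism, I would invoke the Kalkman automorphism $\Phi_\theta$ of $C_\mathfrak{h}(A)$ associated to the connection $\theta\in (A_1\otimes\mathfrak{h})^{\mathfrak{h}\times\mathfrak{k}}$. This $\Phi_\theta$ conjugates $d_\mathfrak{h}$ into an operator whose zero-polynomial-degree part is the de~Rham differential on $A_{\bas\mathfrak{h}}$, and thereby implements the isomorphism \eqref{eq free then ec is bc} at the cochain level. Because $\theta$ is $\mathfrak{k}$-invariant by hypothesis and $\mathfrak{k}$ commutes with $\mathfrak{h}$ (so it commutes with every $\iota_X$ and $\mathcal{L}_X$ for $X\in\mathfrak{h}$), the operator $\Phi_\theta\otimes \id_{\sym(\mathfrak{k}^*)}$ restricts to an automorphism of $C_\mathfrak{g}(A)$ that commutes with each $\iota_Y$ and $\mathcal{L}_Y$ for $Y\in \mathfrak{k}$. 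Composing with projection onto $\sym(\mathfrak{h}^*)$-degree zero yields a chain retraction $r\colon C_\mathfrak{g}(A)\to C_\mathfrak{k}(A_{\bas\mathfrak{h}})$ satisfying $r\circ j=\id$, and the usual Koszul contracting homotopy on the kernel (which is $\mathfrak{k}$-equivariant for the same reason) gives a homotopy $j\circ r\simeq \id$. Passing to cohomology produces the asserted $\sym(\mathfrak{k}^*)^{\mathfrak{k}}$-algebra isomorphism.

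The main obstacle is bookkeeping: one must verify that every operator appearing in the Kalkman automorphism and its inverse, and in the Koszul contracting homotopy, preserves the $\mathfrak{k}$-invariant subspace; this is precisely where $\mathfrak{k}$-invariance of $\theta$ is used. One must also check that the homotopy is compatible with multiplication in the appropriate graded-derivation sense, so that $j$ induces a ring isomorphism rather than only a linear one. Granted this, the statement follows.
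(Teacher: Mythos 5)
This proposition is one the paper does not prove: it is imported verbatim from \cite[Proposition 3.9]{goertsches} and \cite[Proposition A.6.3]{lin}, with the remark following it only recording that the inverse isomorphism is the Cartan map $\mathcal{C}_\theta:\omega\mapsto \hor_\theta(\omega(\mu_\mathfrak{k}))$. Your sketch is essentially the strategy of those references, so there is no clash of methods to report: the verification that $j$ is a well-defined chain map and an algebra map is correct as you wrote it, and the Kalkman/Mathai--Quillen conjugation with a $\mathfrak{k}$-invariant connection, followed by a $\mathfrak{k}$-equivariant contracting homotopy, is exactly how the cited proofs run. Two small points of hygiene. First, your last worry is unnecessary: since $j$ is a morphism of differential graded algebras, once it is shown to be a quasi-isomorphism the induced map on cohomology is automatically a ring (indeed $\sym(\mathfrak{k}^*)^{\mathfrak{k}}$-algebra) isomorphism; the homotopy need not interact with products at all. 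Second, the step ``compose $\Phi_\theta$ with projection onto $\sym(\mathfrak{h}^*)$-degree zero'' should be phrased more carefully: bare projection does not land in $A_{\bas\mathfrak{h}}$, and the hypothesis gives only $\mathcal{L}_Y\theta=0$ for $Y\in\mathfrak{k}$, not $\iota_Y\theta=0$, so the $\mathfrak{k}$-direction contributes curvature terms. The correct retraction is precisely the Cartan map above, i.e.\ substitution of the $\mathfrak{k}$-equivariant curvature $\mu_\mathfrak{k}=d_\mathfrak{k}\theta+\frac12[\theta,\theta]$ for the $\mathfrak{h}$-variable followed by $\theta$-horizontal projection; this is what the conjugation by the Kalkman automorphism produces when all the bookkeeping you flag is carried out. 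With that understanding, your outline is a faithful reconstruction of the standard proof.
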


\begin{remark}
Proposition \ref{prop: commuting actions principle} is a generalization for Lie algebra actions of a classical theorem by H.~Cartan regarding equivariant cohomology of principal bundles with a commuting Lie group action (see, e.g., \cite{nicolaescu}). It appears in \cite[Proposition 3.9]{goertsches} with the additional hypothesis that $A_k=0$ for $k<0$, and was recently generalized in \cite[Proposition A.6.3]{lin} where the authors also prove that the inverse isomorphism is given by the so-called \textit{Cartan map} $\mathcal{C}_\theta$. In the Cartan model $\mathcal{C}_\theta$ is the map induced in cohomology by $C_{\mathfrak{g}}(A)\ni\omega\mapsto \hor_\theta(\omega(\mu_\mathfrak{k}))\in C_{\mathfrak{k}}(A_{\bas\mathfrak{h}})$, where $\omega(\mu_\mathfrak{k})$ denotes the polynomial $\mathfrak{k}\to A$ obtained from $\omega$ by substituting the $\mathfrak{k}$-equivariant curvature
$$\mu_\mathfrak{k}:=d_{\mathfrak{k}}\theta+\frac{1}{2}[\theta,\theta]\in C_{\mathfrak{k}}^2(A)\otimes\mathfrak{h}$$
for the $\mathfrak{h}$-variable, and $\hor_\theta$ denotes the $\theta$-horizontal projection. The restriction of $\mathcal{C}_\theta$ to $\sym(\mathfrak{h}^*)^\mathfrak{h}$ is the equivariant Chern--Weil homomorphism.
\end{remark}

\subsection{Equivariant cohomology of orbifolds}

Suppose a Lie group $G$ acts on $\mathcal{O}$. One can then form the Borel construction $\mathcal{O}_G:=EG\times_G|\mathcal{O}|$ and define the $G$-equivariant cohomology of $\mathcal{O}$ as $H_G(\mathcal{O}):=H(\mathcal{O}_G,\mathbb{R})$, where the latter is the singular cohomology of $\mathcal{O}_G$ with coefficients in $\mathbb{R}$.

On the other hand, there is an induced infinitesimal action of the Lie algebra $\mathfrak{g}$ of $G$ on $\mathcal{O}$, and we can consider the $\mathfrak{g}$-equivariant cohomology
$$H_\mathfrak{g}(\mathcal{O}):=H_\mathfrak{g}(\Omega(\mathcal{O})).$$
We have the following orbifold version of the equivariant De Rham theorem.

\begin{theorem}[Equivariant De Rham theorem for orbifolds]\label{thrm: equivariant De Rham theorem for orbifolds}
Let a connected, compact Lie group $G$ with Lie algebra $\mathfrak{g}$ act on an $n$-dimensional orbifold $\mathcal{O}$. Then
$$H_G(\mathcal{O})\cong H_\mathfrak{g}(\mathcal{O})$$
as $\sym(\mathfrak{g}^*)^{\mathfrak{g}}$-algebras.
\end{theorem}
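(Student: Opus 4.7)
The plan is to reduce the statement to the classical equivariant de Rham theorem for manifolds via the presentation $\mathcal{O} \cong P/\!/K$, with $P := \mathcal{O}^{\Yup}_{\mathbb{C}}$ the unitary frame bundle and $K := \mathrm{U}(n)$ acting locally freely on $P$ (Example \ref{example: orbifold structure on the leaf space}). Since $G$ is compact we may average to obtain a $G$-invariant complex Riemannian structure on $\mathcal{O}$; this lifts the $G$-action to a smooth action on the manifold $P$ which commutes with the $K$-action, so that $P$ becomes a genuine $G\times K$-manifold. Under the canonical identification $\Omega(\mathcal{O}) \cong \Omega(P)_{\bas\mathfrak{k}}$ of $\mathfrak{g}^\star$-algebras, the task becomes to compare $H_\mathfrak{g}(\Omega(P)_{\bas\mathfrak{k}})$ with $H_G(\mathcal{O})$.

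On the algebraic side, Proposition \ref{prop: isometric action on orbifold is of type C} gives that $\Omega(P)$ is locally free as a $\mathfrak{k}^\star$-algebra, with a connection $\theta$ that may be averaged over the compact group $G$ to become $G$-invariant, hence in particular $\mathfrak{g}$-invariant. The commuting actions principle (Proposition \ref{prop: commuting actions principle}), applied to $\mathfrak{g}\times\mathfrak{k}$, then yields an isomorphism of $\sym(\mathfrak{g}^*)^\mathfrak{g}$-algebras
\[ H_\mathfrak{g}(\mathcal{O}) \;=\; H_\mathfrak{g}\bigl(\Omega(P)_{\bas\mathfrak{k}}\bigr) \;\cong\; H_{\mathfrak{g}\times\mathfrak{k}}(\Omega(P)). \]
Since $P$ is a manifold carrying a smooth action of the compact connected Lie group $G\times K$, the classical equivariant de Rham theorem of H.~Cartan identifies the right-hand side with the Borel equivariant cohomology $H_{G\times K}(P) = H^*(EG\times EK\times_{G\times K}P;\mathbb{R})$.

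The remaining step is the topological comparison $H_{G\times K}(P)\cong H_G(\mathcal{O})$. Writing the total space of the Borel construction as $EG\times_G(EK\times_K P)$, the natural projection $EK\times_K P\to P/K = |\mathcal{O}|$ has homotopy fibers that are classifying spaces of the finite local isotropy groups $\Gamma_x$, and these are acyclic in real cohomology. A stratification argument along the canonical decomposition of $|\mathcal{O}|$, combined with Mayer--Vietoris induction and the Leray spectral sequence, shows that this map is a real cohomology equivalence; by naturality in $G$, the same conclusion persists after taking the further homotopy quotient by $G$, yielding the desired isomorphism with $H^*(EG\times_G|\mathcal{O}|;\mathbb{R}) = H_G(\mathcal{O})$. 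That the identifications are compatible with the $\sym(\mathfrak{g}^*)^\mathfrak{g}$-module structures is automatic, as all maps involved are induced by the respective projections to a point.

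The step I expect to be the main obstacle is the last topological comparison: the projection $EK\times_K P\to|\mathcal{O}|$ is not a fiber bundle, because the stabilizers jump across strata, so a single Leray--Serre argument will not suffice. The cleanest route is to proceed stratum by stratum, patching by Mayer--Vietoris and invoking the insensitivity of real cohomology to finite quotients (a real-coefficient refinement underlying Satake's Theorem \ref{theorem: Satake}); everything is natural with respect to the $G$-action, which is what allows the conclusion to descend to the Borel construction by $G$.
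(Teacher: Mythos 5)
Your proposal is correct and follows essentially the same route as the paper: present $\mathcal{O}$ as $\mathcal{O}^{\Yup}_{\mathbb{C}}/\!/\mathrm{U}(n)$, lift the $G$-action to the frame bundle, apply the classical equivariant de Rham theorem there, and pass back through the commuting actions principle (Proposition \ref{prop: commuting actions principle}). The only difference is that you traverse the chain of isomorphisms in the opposite direction and spell out the topological comparison $H_{G\times \mathrm{U}(n)}(\mathcal{O}^{\Yup}_{\mathbb{C}})\cong H_G(\mathcal{O})$, which the paper asserts without elaboration since the $\mathrm{U}(n)$-stabilizers are finite and real cohomology is insensitive to such fibers.
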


\begin{proof}
We can reduce the proof to an application of the classical equivariant De Rham theorem (see, for instance, \cite[Theorem 2.5.1]{guillemin}) by recalling that $\mathcal{O}$ is diffeomorphic to $\mathcal{O}^{\Yup}_{\mathbb{C}}/\!/\mathrm{U}(n)$ (see Example \ref{example: orbifold structure on the leaf space}). The $G$-action on $\mathcal{O}$ lifts to an action on $\mathcal{O}^{\Yup}_{\mathbb{C}}$ commuting with the natural $\mathrm{U}(n)$-action. We then have
\begin{eqnarray*}
H_G(\mathcal{O}) & \cong &  H_G(\mathcal{O}^{\Yup}_{\mathbb{C}}/\mathrm{U}(n))\cong H_{G\times\mathrm{U}(n)}(\mathcal{O}^{\Yup}_{\mathbb{C}})\\
 & \cong & H_{\mathfrak{g}\times\mathfrak{u}(n)}(\mathcal{O}^{\Yup}_{\mathbb{C}})\cong H_{\mathfrak{g}}(\Omega_{\bas\mathfrak{u}(n)}(\mathcal{O}^{\Yup}_{\mathbb{C}}))\cong H_\mathfrak{g}(\Omega(\mathcal{O}))\\
 & = & H_{\mathfrak{g}}(\mathcal{O})
\end{eqnarray*}
where the third isomorphism follows from the classical equivariant De Rham theorem, since $\mathcal{O}^{\Yup}_{\mathbb{C}}$ is a manifold, and the fourth one from Proposition \ref{prop: commuting actions principle}.\end{proof}

\subsection{Equivariant basic cohomology}

A transverse infinitesimal action of $\mathfrak{g}$ on a foliated orbifold $(\mathcal{O},\f)$ is a Lie algebra homomorphism
$$\mu:\mathfrak{g}\longrightarrow \mathfrak{l}(\f)$$
A transverse infinitesimal action induces a $\mathfrak{g}^\star$-algebra structure on $\Omega(\f)$, with $d$ being the usual exterior derivative and the derivations $\mathcal{L}_X$ and $\iota_X$ defined as $\mathcal{L}_X\omega:=\mathcal{L}_{\widetilde{X}}\omega$ and $\iota_X\omega:=\iota_{\widetilde{X}}\omega$ (the proof in \cite[Proposition 3.12]{goertsches} for foliations on manifolds adapts directly).

Each $\overline{X}\in\mathfrak{l}(\f)$ corresponds to a unique $X_T\in\mathfrak{X}(\mathscr{H}_\f)$ that restricted to $T_i$ is $\pi_i$-related to the restriction $X|_{\widetilde{U}_i}$ of a foliate representative of $\overline{X}$. This correspondence induces an infinitesimal $\mathfrak{g}$-action on $(T_\f,\mathscr{H}_\f)$. Let $i:T_\f\to U_\mathcal{O}$ be the inclusion of $T_\f$ as a total transversal. We claim that $i^*:\Omega(\mathscr{H})\to\Omega(\f)$ is an isomorphism of $\mathfrak{g}^\star$-algebras. In fact, it is clear that it is an isomorphism of algebras and that $di^*=i^*d$. Moreover, for $\mu(X)_T\in\mathfrak{X}(\mathscr{H}_\f)$ one can always choose a representative $\widetilde{X}\in\mathfrak{L}(\f)$ of $\mu(X)$ that is $i$-related to $\mu(X)_T$. Then
$$i^*(\mathcal{L}_X\omega)=i^*(\mathcal{L}_{\widetilde{X}}\omega)=\mathcal{L}_{\mu(X)_T}i^*\omega=\mathcal{L}_Xi^*\omega$$
and similarly $i^*\iota_X=\iota_X i^*$, for each $X\in\mathfrak{g}$. This establishes the following.

\begin{proposition}\label{proposition: transverse action projects to holonomy pseudogroup}
A transverse action $\mathfrak{g}\to\mathfrak{l}(\f)$ projects to an infinitesimal action of $\mathfrak{g}$ on $(T_\f,\mathscr{H}_\f)$ and
$$\Omega(\f)\cong\Omega(\mathscr{H}_\f)$$
as $\mathfrak{g}^\star$-algebras.
\end{proposition}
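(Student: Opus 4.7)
The plan is to follow essentially the construction already sketched in the paragraph preceding the statement, organizing it into two independent parts: (i) producing the infinitesimal action on $(T_\f,\mathscr{H}_\f)$, and (ii) upgrading the well-known identification $\Omega(\f)\cong\Omega(\mathscr{H}_\f)$ of graded algebras to one of $\mathfrak{g}^\star$-algebras.

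For (i), I would first recall that each transverse vector field $\overline{X}\in\mathfrak{l}(\f)$ admits a foliate representative $\widetilde{X}\in\mathfrak{L}(\f)$, and on each distinguished open set $\widetilde{U}_i$ this $\widetilde{X}$ is $\pi_i$-related to a well-defined vector field $X_i$ on $T_i$; the various $X_i$ glue to a single $\mathscr{H}_\f$-invariant field $X_T\in\mathfrak{X}(\mathscr{H}_\f)$, independent of the choice of foliate lift (since two such representatives differ by a field in $\mathfrak{X}(\f)$, whose projection vanishes). The assignment $\overline{X}\mapsto X_T$ is then a Lie algebra homomorphism $\mathfrak{l}(\f)\to\mathfrak{X}(\mathscr{H}_\f)$, and composing with $\mu:\mathfrak{g}\to\mathfrak{l}(\f)$ gives the required infinitesimal action on the holonomy pseudogroup.

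For (ii), take the inclusion $i:T_\f\hookrightarrow U_\mathcal{O}$ of a total transversal. The identification of $\f$-basic forms with $\mathscr{H}_\f$-invariant forms on $T_\f$ (which already underlies Proposition~\ref{prop: basic cohomology of closed foliations}) shows that $i^*:\Omega(\mathscr{H}_\f)\to\Omega(\f)$ is an isomorphism of graded algebras commuting with $d$. For the remaining $\mathfrak{g}^\star$-structure, given $X\in\mathfrak{g}$ I would choose a foliate representative $\widetilde{X}\in\mathfrak{L}(\f)$ of $\mu(X)$ that is $i$-related to $\mu(X)_T$ (which exists by the construction in (i)). Then naturality of the Lie derivative and interior product under $i$-relatedness yields, for $\omega\in\Omega(\mathscr{H}_\f)$,
\[
i^*(\mathcal{L}_X\omega)=i^*(\mathcal{L}_{\widetilde{X}}\omega)=\mathcal{L}_{\mu(X)_T}(i^*\omega)=\mathcal{L}_X(i^*\omega),
\]
and analogously $i^*\iota_X=\iota_X i^*$, showing $i^*$ is a $\mathfrak{g}^\star$-algebra isomorphism.

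The only delicate point in this argument is verifying that the assignment $\overline{X}\mapsto X_T$ is well-defined and a Lie algebra homomorphism, but this is essentially a restatement of the standard description of $\mathfrak{l}(\f)$ via $\mathscr{H}_\f$-invariant fields on $T_\f$ (see the preliminaries on $\mathfrak{L}(\f)$ and $\mathfrak{l}(\f)$). Once this is in place, the $\mathfrak{g}^\star$-compatibility reduces to the naturality of $d$, $\mathcal{L}$ and $\iota$ under $i$-related vector fields, which is purely formal.
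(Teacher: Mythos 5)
Your proposal is correct and follows essentially the same route as the paper: the paper's argument (given in the paragraph preceding the statement) likewise uses the correspondence $\overline{X}\mapsto X_T$ to project the transverse action to $(T_\f,\mathscr{H}_\f)$, and then verifies that $i^*$ is a $\mathfrak{g}^\star$-algebra isomorphism by choosing a foliate representative of $\mu(X)$ that is $i$-related to $\mu(X)_T$ and invoking naturality of $d$, $\mathcal{L}$ and $\iota$. Your added care about well-definedness of $\overline{X}\mapsto X_T$ is a fair elaboration of what the paper treats as standard.
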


The \textit{$\mathfrak{g}$-equivariant basic cohomology of $(\mathcal{O},\f)$} is the $\mathfrak{g}$-equivariant cohomology of the basic subcomplex $\Omega(\f)$, which we will denote by
$$H_\mathfrak{g}(\f):=H_\mathfrak{g}(\Omega(\f))=H(C_\mathfrak{g}(\Omega(\f),d_\mathfrak{g})).$$
By Proposition \ref{proposition: transverse action projects to holonomy pseudogroup} we therefore have $H_\mathfrak{g}(\f)\cong H_\mathfrak{g}(\Omega(\mathscr{H}_\f))$, as $\sym(\mathfrak{g}^*)^{\mathfrak{g}}$-algebras.

\begin{example}
A Killing foliation $(M,\f)$ has a natural transverse action of its structural algebra $\mathfrak{a}$, since $\mathfrak{a}\cong\mathscr{C}_\f(M)$. Notice that the fixed point set $M^\mathfrak{a}=\{x\in M\ |\ \mathfrak{a}_x=\mathfrak{a}\}$ is precisely the union of the closed leaves of $\f$, since $\mathfrak{a}\f=\overline{\f}$ by Molino's structural theory. We will be mainly interested in the study of $H_\mathfrak{a}(\f)$.
\end{example}

\section{Deformations of Killing foliations}\label{section: deformations}

Two $p$-dimensional smooth foliations $\f_0$ and $\f_1$ of $M$ are $C^\infty$-\textit{homotopic} if there is a $p$-dimensional smooth foliation $\f$ of $M\times [0,1]$ such that $M\times\{t\}$ is saturated by leaves of $\f$, for each $t\in[0,1]$, and
$$\f_i=\f|_{M\times\{i\}},$$
for $i=0,1$. Here we will simply say that $\f_t$ is a \textit{deformation} of $\f_0$ into $\f_1$.

It was shown by A.~Haefliger and E.~Salem \cite{haefliger2}, as a corollary of their study on the classifying space of holonomy pseudogroups of Killing foliations, that it is possible to deform such a foliation $\f$ into a closed foliation $\g$, that can be chosen arbitrarily close to $\f$, in a way that the deformation occurs within the closures of the leaves of $\f$. In \cite{caramello} we showed that some of the transverse geometry and topology of $\f$ is preserved throughout this deformation, so that one can use orbifold theory on $M/\!/\g$ to study $\f$. For the reader's convenience we summarize some properties of this deformation in the following.

\begin{theorem}[{\cite[Theorem B]{caramello}}]\label{theorem: deformation}
Let $(\f,\mathrm{g}^T)$ be a Killing foliation of a compact manifold $M$. Then there is a deformation $\f_t$ of $\f$, called a \emph{regular deformation}, into a closed foliation $\g$, which can be chosen arbitrarily close to $\f$, such that
\begin{enumerate}[(i)]
\item \label{injection of tensor algebra} for each $t$ there is an injection $\iota:\mathcal{T}(\f)\to\mathcal{T}(\f_t)$ that smoothly deforms transverse geometric structures given by $\f$-basic tensors, such as the metric $\mathrm{g}^T$, into respective transverse geometric structures for $\f_t$,
\item \label{toric action on the quotient} the quotient orbifold $M/\!/\g$ admits an effective isometric action of a torus $\mathbb{T}^d$, with respect to the metric induced from $\iota\mathrm{g}^T$, such that $M/\overline{\f}\cong(M/\g)/\mathbb{T}^d$, where $d=\dim\mathfrak{a}$.
\item \label{symmetries and basic tensors} $\mathcal{T}(\f)$ is isomorphic to the algebra $\mathcal{T}(M/\!/\g)^{\mathbb{T}^d}$ of $\mathbb{T}^d$-invariant tensor fields on $M/\!/\g$, the isomorphism being given by $\pi_*\circ\iota$, where $\pi_*:\mathcal{T}(\g)\to\mathcal{T}(M/\!/\g)$ is the pushforward by the canonical projection.
\end{enumerate}
In particular, if $\g$ is chosen sufficiently close to $\f$, upper and lower bounds on transverse sectional and Ricci curvature of $\f$ are maintained.
\end{theorem}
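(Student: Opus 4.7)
The plan is to invoke Molino's structure theory together with the perturbation construction of Haefliger--Salem~\cite{haefliger2}. Since $\f$ is Killing on the compact manifold $M$, the structural Lie algebra $\mathfrak{a}$ is globally generated by transverse Killing fields $\overline{X}_1,\dots,\overline{X}_d\in\mathfrak{iso}(\f)$ whose orbits, together with $T\f$, span $T\overline{\f}$. These descend to pairwise commuting Killing vector fields on a transversal $(T_\f,\mathscr{H}_\f)$ commuting with every element of $\mathscr{H}_\f$; by compactness of $M/\overline{\f}$, the closure of the group generated by their flows inside the pseudogroup of local isometries of $T_\f$ yields the action of a torus $\mathbb{T}^d$, and the enlarged pseudogroup $\overline{\mathscr{H}_\f}=\mathscr{H}_\f\cdot\mathbb{T}^d$ represents $\overline{\f}$. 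Crucially, $\mathscr{H}_\f$ is \emph{dense} in $\overline{\mathscr{H}_\f}$ along each $\mathbb{T}^d$-orbit.

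To construct $\f_t$, I pick a smooth one-parameter family of linearly independent vectors $\xi_1(t),\dots,\xi_d(t)\in\mathfrak{a}$ with $\xi_i(0)=\overline{X}_i$ such that at $t=1$ the $\xi_i(1)$ lie in a rational lattice of $\mathbb{T}^d$ (so that each $\exp(\xi_i(1))$ has finite order), which can be arranged with $\xi_i(1)$ arbitrarily close to $\overline{X}_i$. The new pseudogroup $\mathscr{H}_{\f_t}$ is obtained from $\mathscr{H}_\f$ by composing its generators with the flows of $\xi_i(t)-\overline{X}_i$. The corresponding $p$-dimensional foliation $\f_t$ on $M$ has the same leaf closures as $\f$ but its leaves inside each closure are reoriented by the perturbation; at $t=1$, $\g:=\f_1$ has closed leaves because $\mathscr{H}_\g$ is now discrete inside each $\mathbb{T}^d$-orbit.

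For (i), an $\f$-basic tensor $\xi$ corresponds to an $\mathscr{H}_\f$-invariant transverse tensor on $T_\f$. By density of $\mathscr{H}_\f$ in $\overline{\mathscr{H}_\f}$, continuity forces $\xi$ to be $\overline{\mathscr{H}_\f}$-invariant, hence $\mathbb{T}^d$-invariant. Since $\mathscr{H}_{\f_t}\subset\overline{\mathscr{H}_\f}$, the same tensor is $\mathscr{H}_{\f_t}$-invariant, yielding the desired injection $\iota$, which at the level of the transversal is the identity but corresponds to genuinely different transverse tensors on $M$ because $T\f_t\neq T\f$; in particular $\iota\mathrm{g}^T$ is a transverse metric for $\f_t$. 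For (ii), the torus $\mathbb{T}^d$ commutes with $\mathscr{H}_\g$, so it descends to a smooth isometric action on the quotient orbifold $M/\!/\g$ whose orbits are precisely the $\overline{\g}=\overline{\f}$-leaves modulo $\g$; whence $M/\overline{\f}\cong(M/\g)/\mathbb{T}^d$. Assertion (iii) combines (i) with Proposition~\ref{prop: basic cohomology of closed foliations}: $\iota\mathcal{T}(\f)\subset\mathcal{T}(M/\!/\g)^{\mathbb{T}^d}$, and any $\mathbb{T}^d$-invariant tensor on $M/\!/\g$ pulls back to an $\overline{\mathscr{H}_\f}$-invariant, hence $\f$-basic, transverse tensor.

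Finally, the curvature bounds persist by continuity: for $\g$ sufficiently close to $\f$, $\iota\mathrm{g}^T$ is $C^2$-close to $\mathrm{g}^T$ on the transversal and, $M$ being compact, the transverse sectional and Ricci curvatures vary continuously in $t$ within any prescribed bounds. The main technical hurdle is showing that the infinitesimal deformation $\mathscr{H}_{\f_t}$ really integrates to a smooth family of genuine $p$-dimensional foliations on $M$ and that $\iota$ depends smoothly on $t$; this is essentially the content of the Haefliger--Salem classifying-space argument and requires care in translating between the pseudogroup picture on $T_\f$ and the global picture on $M$.
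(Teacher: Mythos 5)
The central gap is that you never actually construct the deformation $\f_t$, which is the whole content of the theorem. Your recipe --- perturb $\overline{X}_1,\dots,\overline{X}_d$ to rational elements $\xi_i(t)$ and form a new pseudogroup by ``composing the generators of $\mathscr{H}_\f$ with the flows of $\xi_i(t)-\overline{X}_i$'' --- produces, at best, a family of abstract pseudogroups; there is no argument that these are holonomy pseudogroups of genuine $p$-dimensional foliations of the \emph{same} compact manifold $M$, that they fit into a $C^\infty$-homotopy $\f_t$, or that the leaves at $t=1$ are closed. Your closing paragraph concedes that this integration step ``is essentially the content of the Haefliger--Salem classifying-space argument,'' but that step \emph{is} the proof, not a technicality. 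The argument of \cite[Theorem B]{caramello} (recalled after the statement) runs the other way: by \cite[Theorem 3.4]{haefliger2} one realizes $\mathscr{H}_\f$ as the holonomy pseudogroup of the orbit foliation $\f_H$ of a dense contractible subgroup $H<\mathbb{T}^N$ acting on an orbifold $\mathcal{O}$, together with a smooth map $\Upsilon:M\to\mathcal{O}$ transverse to $\f_H$ with $\Upsilon^*(\f_H)=\f$; one then deforms the Lie algebra $\mathfrak{h}$ through subalgebras $\mathfrak{h}(t)<\mathrm{Lie}(\mathbb{T}^N)$ to a rational $\mathfrak{k}$ with closed subgroup $K$, keeping the actions locally free and the foliations transverse to $\Upsilon$, and sets $\f_t:=\Upsilon^*(\f_{H(t)})$. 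It is this global model that guarantees $\f_t$ exists on $M$ and that $\g=\f_1$ is closed.

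Two further points are not mere details. First, your starting claim that the closures of the flows of the $\overline{X}_i$ yield a global action of a torus $\mathbb{T}^d$ on $T_\f$ commuting with $\mathscr{H}_\f$ is not true in general: the $\overline{X}_i$ only generate local flows, and the closure of $\mathscr{H}_\f$ is a pseudogroup locally modeled on $\mathfrak{a}$, not a compact group action; obtaining an honest torus action is exactly what passing to the Haefliger--Salem orbifold provides, and there the relevant torus is $\mathbb{T}^N$ (with $N=d+\dim\mathfrak{h}$), the object being deformed is the subgroup $H$ defining the leaves, not the structural directions $\overline{X}_i$, which span the complement of the leaves inside their closures. Second, your verification of (i)--(iii) rests on the unproved inclusion $\mathscr{H}_{\f_t}\subset\overline{\mathscr{H}_\f}$ on a common transversal; in the actual proof $\iota$ is defined upstairs on $\mathcal{O}$ by ``deforming the kernel'' of a basic ($\mathbb{T}^N$-invariant, by density of $H$) tensor using a $\mathbb{T}^N$-invariant complement $D$ to all the $T\f_{H(t)}$, and items (ii)--(iii), including effectiveness and $M/\overline{\f}\cong(M/\g)/\mathbb{T}^d$, come from the induced action of $\mathbb{T}^N/K\cong\mathbb{T}^d$ on $\mathcal{O}/\!/K\cong M/\!/\g$. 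Your heuristic for (i) is in the right spirit, but as written the proposal assumes the key structural ingredients rather than proving them.
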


\begin{remark} In \cite{caramello} we mention, without using it, that an $\f$-basic symplectic form $\omega$ is mapped by $\iota$ into a $\g$-basic symplectic form, but this is not true in general because $\iota$ may not commute with the exterior derivative, so $\iota\omega$ may not be closed after the deformation.
\end{remark}

It will be useful to briefly recall how the regular deformation is constructed. By \cite[Theorem 3.4]{haefliger2}, there exists an orbifold $\mathcal{O}$ associated to $\f$ admitting a $\mathbb{T}^N$-action $\mu$ such that there is a dense contractible subgroup $H<\mathbb{T}^N$ that acts locally freely on $\mathcal{O}$ and the holonomy pseudogroup $\mathscr{H}_{\f_H}$ of the foliation $\f_H$ of $\mathcal{O}$ defined by the orbits of $H$ is equivalent to $\mathscr{H}_\f$. Moreover, there is a smooth (good) map $\Upsilon:M\to\mathcal{O}$ transverse to $\f_H$ with $\Upsilon^*(\f_H)=\f$. As the authors comment in \cite{haefliger2}, this construction can be used to deform $\f$ as follows. Let $\mathfrak{h}$ be the Lie algebra of $H$ and consider a Lie subalgebra $\mathfrak{k}<\mathrm{Lie}(\mathbb{T}^N)\cong\mathbb{R}^N$, with $\dim(\mathfrak{k})=\dim(\mathfrak{h})$, such that its corresponding Lie subgroup $K<\mathbb{T}^N$ is closed. Suppose $\mathfrak{k}$ is close enough to $\mathfrak{h}$, as points in the Grassmannian $\mathrm{Gr}^{\dim\mathfrak{h}}(\mathrm{Lie}(\mathbb{T}^N))$, so that one can choose a smooth path $\mathfrak{h}(t)$ connecting $\mathfrak{h}$ to $\mathfrak{k}$ such that for each $t$ the action $\mu|_{H(t)}$ of the corresponding Lie subgroup $H(t)$ is locally free and the induced foliation remains transverse to $\Upsilon$. Then $\f_t:=\Upsilon^*(\f_{H(t)})$ defines a $C^{\infty}$-homotopic deformation of $\f=\f_0$ into $\g=\f_1$. In this case $\mathscr{H}_{\f_t}$ is equivalent to $\mathscr{H}_{\f_{H(t)}}$ for each $t$. Moreover, since $K$ is closed, $\g$ is a closed foliation.

The map $\iota:\mathcal{T}(\f)\to\mathcal{T}(\f_t)$ is defined by working on $\mathcal{O}$ and conjugating with $\Upsilon^*$, which is an isomorphism $\mathcal{T}(\f_{H(t)})\to\mathcal{T}(\f_t)$. On $\mathcal{O}$ the map simply ``deforms the kernel'' of a tensor $\xi\in\mathcal{T}(\f_H)$, yielding a tensor $\xi_t\in\mathcal{T}(\f_{H(t)})$. More precisely, by taking $\mathfrak{k}$ closer to $\mathfrak{h}$ if necessary, we can fix a subalgebra $\mathfrak{a}<\mathfrak{t}$ which is complementary to each $\mathfrak{h}(t)$. By density we can furthermore assume that the corresponding $1$-parameter subgroup $A<\mathbb{T}^N$ is closed. Here we are purposefully abusing the notation by denoting this subalgebra by $\mathfrak{a}$ since its induced $\f_H$-transverse action will coincide, via the correspondence $\f \leftrightarrow \f_H$, with the natural $\f$-transverse action of the structural algebra of $\f$ (see also Section \ref{section: beq under deformations}). Now choose a $\mathbb{T}^N$-invariant Riemannian metric on $\mathcal{O}$ and consider the distribution of varying rank $D^\perp$ given by $D^\perp_x=T_x(\mathbb{T}^Nx)^\perp$. The distribution $D_\mathfrak{a}$ spanned by the induced infinitesimal $\mathfrak{a}$-action on $\mathcal{O}$ then complements $D^\perp$ to a $\mathbb{T}^N$-invariant subbundle $D=D_\mathfrak{a}\oplus D^\perp\subset T\mathcal{O}$ which is complementary to $T\f_{H(t)}$, for each $t$. We declare that $\xi_t$ coincides with $\xi$ on $D$ and vanishes when contracted with vectors in $T\f_{H(t)}$. In particular, one sees that $\iota\mathrm{g}^T$ is a transverse metric for $\f_t$. The $\mathbb{T}^N$-action on $\mathcal{O}$ projects to an action of $\mathbb{T}^N/K\cong\mathbb{T}^d$ on $\mathcal{O}/\!/K$, which is diffeomorphic to $M/\!/\g$, thus yielding us the $\pi_*\circ\iota(\mathrm{g}^T)$-isometric $\mathbb{T}^d$-action on $M/\!/\g$.

\subsection{Applications of the deformation technique}

We now illustrate the application of this deformation method by establishing Theorems \ref{theorem: bochner intro}, \ref{theo: Milnor trasnverso intro} and \ref{theorem: synge intro} from the introduction. Theorem \ref{theorem: bochner intro} consists of the following theorem and its corollary.

\begin{theorem}\label{theorem: transverse bochner}
Let $\f$ be a Killing foliation of a compact manifold $M$. If $\ric_\f\leq c<0$, then $\f$ is closed.
\end{theorem}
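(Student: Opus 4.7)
My plan is to argue by contradiction, combining the regular deformation technique of Theorem \ref{theorem: deformation} with the orbifold Bochner theorem (Theorem \ref{teo: bochner for orbifolds}). Suppose $\f$ were not closed; then its structural algebra $\mathfrak{a}$ has dimension $d\geq 1$.

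First I apply Theorem \ref{theorem: deformation} to deform $\f$ into a closed Killing foliation $\g$, chosen sufficiently near $\f$ that the transverse Ricci bound $\ric_\g\leq c'<0$ persists for some $c'<0$. Because $\g$ is closed with $M$ compact, its leaves are compact with finite holonomy, so by Example \ref{example: orbifold structure on the leaf space} the leaf space $M/\!/\g$ is a compact connected Riemannian orbifold, and the pushforward $\pi_*(\iota\mathrm{g}^T)$ of the deformed transverse metric is a Riemannian metric on $M/\!/\g$ whose Ricci tensor agrees with the transverse Ricci tensor of $\g$; in particular it satisfies $\ric_{M/\!/\g}\leq c'<0$.

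Now Theorem \ref{theorem: deformation}(ii) provides an effective isometric action of $\mathbb{T}^d$ on $M/\!/\g$. Since $\ric_{M/\!/\g}\leq c'<0$ everywhere, $M/\!/\g$ has (in fact strictly) quasi-negative Ricci curvature, so by Theorem \ref{teo: bochner for orbifolds} its isometry group must be finite. This contradicts the embedding $\mathbb{T}^d\hookrightarrow\mathrm{Iso}(M/\!/\g)$ coming from effectiveness of the action, as long as $d\geq 1$. Hence $d=0$, so $\mathfrak{a}$ is trivial and $\overline{\f}=\f$.

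The main technical point to handle is the orientability hypothesis in Theorem \ref{teo: bochner for orbifolds}. If $M/\!/\g$ fails to be locally orientable, one passes to its orientation double cover, which remains compact with the same Ricci bound and on which the $\mathbb{T}^d$-action lifts: indeed $\mathbb{T}^d$ is connected, hence acts by orientation-preserving diffeomorphisms, so the fundamental vector fields of the $\mathfrak{a}$-action lift to nontrivial Killing fields on the cover, and Bochner's theorem applied there still contradicts their existence. The only other delicate check is that the perturbed transverse metric genuinely yields the claimed Ricci bound on the quotient orbifold, which is exactly the content of the ``in particular'' clause of Theorem \ref{theorem: deformation}.
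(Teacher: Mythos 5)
Your proof is correct and follows essentially the same route as the paper: deform $\f$ into a closed foliation $\g$ still satisfying $\ric_\g<0$ and play the effective isometric $\mathbb{T}^d$-action on $M/\!/\g$ ($d=\dim\mathfrak{a}>0$) against the orbifold Bochner theorem (Theorem \ref{teo: bochner for orbifolds}). The only difference is in how the orientability hypothesis is secured: the paper passes to a double cover of $M$ to make $\f$ transversely orientable before deforming (citing that non-closedness survives the cover), whereas you pass to the orientation double cover of the quotient orbifold afterwards and lift the Killing fields; both handle this technical point adequately.
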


\begin{proof}
Suppose $\f$ is not closed, so $\dim(\mathfrak{a})>0$. Passing to a double cover of $M$ if necessary, we can suppose $\f$ is transversely orientable \cite[Proposition 3.5.1]{candel}. In fact, the pullback of any non-closed Riemannian foliation by a finitely-sheeted covering map is also non-closed (see \cite[Proposition 3.6]{caramello}). Then we can apply Theorem \ref{theorem: deformation} to obtain a closed Riemannian foliation $\g$ also satisfying $\ric_{\g}<0$, hence $M/\!/\g$ is a connected, compact, oriented Riemannian orbifold  with $\ric_{\mathcal{O}}<0$. Moreover, item \eqref{toric action on the quotient} of Theorem \ref{theorem: deformation} asserts that $M/\!/\g$ admits an isometric action of a torus $\mathbb{T}^d$, with $d=\dim(\mathfrak{a})>0$. This contradicts Theorem \ref{teo: bochner for orbifolds}.
\end{proof}

\begin{corollary}
Let $\f$ be a complete, transversely compact Riemannian foliation of a manifold $M$ with $|\pi_1(M)|<\infty$. If $\ric_\f\leq c<0$, then $\f$ is closed.
\end{corollary}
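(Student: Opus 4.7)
The plan is to reduce the corollary to Theorem \ref{theorem: transverse bochner} by passing to the universal cover. Let $p \colon \widetilde{M} \to M$ denote the universal covering map. Since $|\pi_1(M)| < \infty$, the map $p$ is a finite covering, so in particular $\widetilde{M}$ inherits the compactness properties of $M$. Pulling back yields a foliation $\widetilde{\f} := p^{-1}(\f)$ on $\widetilde{M}$. Because $p$ is a local isometry for the pullback of any bundle-like metric, $\widetilde{\f}$ is a complete Riemannian foliation and the transverse Ricci curvature is preserved, so $\ric_{\widetilde{\f}} \leq c < 0$. Moreover, the induced map $\widetilde{M}/\overline{\widetilde{\f}} \to M/\overline{\f}$ is a finite-to-one surjection between Hausdorff spaces, so $\widetilde{\f}$ remains transversely compact.

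Since $\widetilde{M}$ is simply connected, the Molino result recalled in Section \ref{section: killing foliations} (Proposition 5.5 of \cite{molino}) implies that $\widetilde{\f}$ is a Killing foliation. I then apply Theorem \ref{theorem: transverse bochner} to $(\widetilde{M}, \widetilde{\f})$ to conclude that $\widetilde{\f}$ is closed. Finally, since $p$ is a finite covering and hence a closed, proper map, the image under $p$ of a closed leaf of $\widetilde{\f}$ is a closed leaf of $\f$; as every leaf of $\f$ arises this way, $\f$ is closed, completing the proof.

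The main technical subtlety is that Theorem \ref{theorem: transverse bochner} is stated for a Killing foliation of a \emph{compact} manifold, while we have only established that $\widetilde{\f}$ is transversely compact. I would address this by observing that the proof of Theorem \ref{theorem: transverse bochner} uses compactness of the ambient manifold only to guarantee compactness of the Haefliger--Salem orbifold $\mathcal{O}$ entering the regular deformation of Theorem \ref{theorem: deformation}; the construction of $\mathcal{O}$ produces a compact orbifold precisely when the foliation is transversely compact, so the deformation to a closed foliation $\widetilde{\g}$ goes through on $\widetilde{M}$, the quotient $\widetilde{M}/\!/\widetilde{\g}$ is a compact Riemannian orbifold with negative Ricci curvature admitting an effective isometric $\mathbb{T}^d$-action for $d = \dim \mathfrak{a}$, and Theorem \ref{teo: bochner for orbifolds} forces $d = 0$. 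Thus $\widetilde{\f}$ is closed, and the finite-covering descent yields the conclusion.
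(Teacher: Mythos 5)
Your overall strategy---lift to the universal cover, observe that $\widetilde{\f}$ is a Killing, transversely compact foliation, and reduce to the transverse Bochner theorem---is the same as the paper's, but the step you flag as a ``technical subtlety'' is precisely where the argument has a genuine gap. The statement does not assume $M$ compact, so $\widetilde{M}$ is in general non-compact, and Theorem \ref{theorem: transverse bochner} (and the deformation Theorem \ref{theorem: deformation} on which it rests, imported from \cite{caramello}) is stated only for Killing foliations of \emph{compact} manifolds. Your proposed fix is the bare assertion that compactness of $M$ enters only through compactness of the Haefliger--Salem orbifold $\mathcal{O}$; this is unproven and doubtful: the deformation $\f_t=\Upsilon^*(\f_{H(t)})$ requires the map $\Upsilon:M\to\mathcal{O}$ to stay transverse to every $\f_{H(t)}$, and compactness of the \emph{domain} $M$ is what allows one to choose $\mathfrak{k}$ close enough to $\mathfrak{h}$ so that transversality holds uniformly; on a non-compact $\widetilde{M}$ this uniformity may fail, and in any case you would be re-proving a strengthened version of an external theorem rather than citing it. The paper sidesteps this entirely: by the surjection \eqref{surjection fundamental groups}, $\mathscr{H}_{\widetilde{\f}}$ is a simply connected, complete pseudogroup of isometries with compact orbit space, so by \cite[Theorem 3.7]{haefliger2} it is equivalent to the holonomy pseudogroup of a Killing foliation $\f'$ on a \emph{compact} simply connected manifold $M'$; since non-closedness and the existence of a transverse metric with $\ric_{\f'}\leq c<0$ are transverse data carried by the pseudogroup, Theorem \ref{theorem: transverse bochner} applies verbatim to $(M',\f')$ and yields the contradiction.

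There is also a smaller but real flaw earlier in your argument: you deduce transverse compactness of $\widetilde{\f}$ from the fact that $\widetilde{M}/\overline{\widetilde{\f}}\to M/\overline{\f}$ is a finite-to-one surjection between Hausdorff spaces. That inference is invalid in general (a continuous finite-to-one surjection onto a compact space need not have compact domain); you would need the map to be closed/proper, or argue as the paper does: with the length structure induced by the transverse metric, $\widetilde{M}/\overline{\widetilde{\f}}$ is complete and bounded, since distances between leaf closures upstairs are at most $|\pi_1(M)|$ times the distances of their projections, which are bounded by $\diam(M/\overline{\f})<\infty$, so compactness follows from the Hopf--Rinow theorem for length spaces. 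Finally, your opening remark that ``$\widetilde{M}$ inherits the compactness properties of $M$'' is vacuous here, since $M$ itself is not assumed compact.
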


\begin{proof} Suppose $\f$ is not closed and consider its lift $\widetilde{\f}$ to the universal cover $\widetilde{M}$ of $M$. Since $\widetilde{M}$ is simply connected and $|\pi_1(M)|<\infty$, we have that $\widetilde{\f}$ is a non-closed Killing foliation. We claim that $\widetilde{M}/\overline{\widetilde{\f}}$, endowed with the length structure induced by the transverse metric, is bounded. In fact, the distance between two leaf closures in $\overline{\widetilde{\f}}$ will be at most $|\pi_1(M)|$ times the distance between their projections on $M$, which is bounded by $\diam(M/\overline{\f})<\infty$. It follows that $\widetilde{\f}$ is transversely compact by the Hopf--Rinow theorem for length spaces (see, for instance, \cite{gromov}).

From the surjection \eqref{surjection fundamental groups} we obtain that $\mathscr{H}_{\widetilde{\f}}$ is a simply connected, complete pseudogroup of isometries whose orbit space $\widetilde{T}/\mathscr{H}_{\widetilde{\f}}\cong \widetilde{M}/\widetilde{\f}$ is compact. By \cite[Theorem 3.7]{haefliger2}, there exists a Killing foliation $\f'$ of a simply connected, \emph{compact} manifold $M'$, such that $\mathscr{H}_{\widetilde{\f}}\cong\mathscr{H}_{\f'}$. Since the pseudougroups are equivalent $\f'$ is also not closed and admits a transverse Riemannian metric satisfying $\ric_{\f'}\leq c<0$. This contradicts Theorem \ref{theorem: transverse bochner}.
\end{proof}

The next result is a transverse generalization of Synge's theorem. It restricts to the classical Synge's theorem for manifolds when $\f$ is the trivial foliation by points.

\begin{theorem}\label{theorem: synge for foliations}
Let $\f$ be a Killing foliation of a compact manifold $M$, with $\sec_\f >0$. Then
\begin{enumerate}[(i)]
\item if $\codim\f$ is even and $\f$ is transversely orientable, then $M/\overline{\f}$ is simply connected, and
\item if $\codim\f$ is odd and $\mathrm{Hol}(L)$ preserves transverse orientation for each $L\in \f$, then $\f$ is transversely orientable.
\end{enumerate}
\end{theorem}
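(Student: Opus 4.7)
The strategy is to reduce both parts to the orbifold Synge theorem of Yeroshkin \cite[Corollary 2.3.6]{yeroshkin} by passing to a closed approximation of $\f$ via Theorem \ref{theorem: deformation}. Concretely, I would choose a regular deformation $\f_t$ with $\g = \f_1$ a closed foliation taken sufficiently close to $\f$ so that, by the last assertion of Theorem \ref{theorem: deformation}, the pinching $\sec_\g > 0$ is maintained. Then $M/\!/\g$ is a compact Riemannian orbifold of dimension $\codim\f$ with positive sectional curvature, and by item \eqref{toric action on the quotient} of that theorem it carries an effective isometric action of $\mathbb{T}^d$, $d = \dim\mathfrak{a}$, whose orbit space is $M/\overline{\f}$.

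For (i), the normal bundle $\nu\f_t = TM/T\f_t$ deforms continuously in $t$ inside $TM$, so transverse orientability is a discrete invariant preserved along the deformation; hence $\g$ is transversely orientable. This is equivalent to each local group of $M/\!/\g$ acting by orientation-preserving diffeomorphisms, making $M/\!/\g$ a locally orientable orbifold of even dimension with positive sectional curvature. The orbifold Synge theorem then gives $\piorb(M/\!/\g) = 1$. Since $\mathbb{T}^d$ is compact and connected and acts on $M/\!/\g$, the quotient projection $M/\!/\g \to M/\overline{\f}$ induces a surjection on (orbifold) fundamental groups, yielding $\pi_1(M/\overline{\f}) = 1$.

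For (ii), I would argue that the hypothesis on the germinal holonomies of $\f$ translates, through the regular deformation, into the statement that each local group of $M/\!/\g$ also acts by orientation-preserving diffeomorphisms. Then $M/\!/\g$ is a locally orientable orbifold of odd dimension $\codim\f$ with positive sectional curvature, and the orbifold Synge theorem forces $M/\!/\g$ to be globally orientable. This is equivalent to $\g$ being transversely orientable, and then continuity of the deformation (and the fact that orientability of a continuous family of vector bundles is an open and closed condition in $t$) transfers transverse orientability back to $\f$.

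The main obstacle is the holonomy comparison used in (ii): one must show that, for $\g$ close enough to $\f$, the local groups of the orbifold $M/\!/\g$ are built from the germinal holonomy data of $\f$, and in particular inherit the orientation-preserving property. I would handle this through the Haefliger--Salem model recalled after Theorem \ref{theorem: deformation}, realizing $\f$ and $\g$ respectively as the foliations induced by a dense subgroup $H < \mathbb{T}^N$ and by its closed deformation $K < \mathbb{T}^N$ acting on a common orbifold $\mathcal{O}$; the local groups of $M/\!/\g \cong \mathcal{O}/\!/K$ are then identifiable with extensions of germinal holonomy groups arising from $H$, which allows the orientation hypothesis on $\mathrm{Hol}(L)$ for leaves $L \in \f$ to be transported to the orbifold $M/\!/\g$.
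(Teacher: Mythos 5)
Your overall strategy is the same as the paper's: deform $\f$ into a closed foliation $\g$ with $\sec_\g>0$, apply Yeroshkin's orbifold Synge theorem to $M/\!/\g$, use the $\mathbb{T}^d$-action to descend to $M/\overline{\f}$ in (i), and in (ii) transfer the hypothesis on $\mathrm{Hol}(L)$ to the local groups of $M/\!/\g$ through the Haefliger--Salem model. Your way of carrying transverse orientability along the deformation (continuity of $\nu\f_t$ inside $TM$) is a harmless variant of the paper's mechanism, which uses $\iota$ applied to a transverse volume form in (i) and item \eqref{symmetries and basic tensors} of Theorem \ref{theorem: deformation} in (ii). Your sketch of the holonomy transfer in (ii) is exactly the paper's route: the local data of $\mathcal{O}/\!/K\cong M/\!/\g$ is controlled by the extension $\widetilde{H}_x$ of $H_x$ by $\Gamma_x$ (Galaz-García et al.), which is the holonomy group of the leaf $Hx\in\f_H$, and the orientation-preserving property is then passed from $H$ to $K$; you leave this step as a sketch, but it is the right one.

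There is, however, one genuine flaw in (i) as written. Transverse orientability of the closed foliation $\g$ is \emph{not} equivalent to the local groups of $M/\!/\g$ acting by orientation-preserving diffeomorphisms: it is strictly stronger, being equivalent to global orientability of the orbifold $M/\!/\g$ (a holonomy-invariant transverse volume form for $\g$ descends to an orbifold volume form), whereas local orientability can hold for a closed foliation with non-orientable leaf space and trivial local groups. More importantly, the even-dimensional case of the orbifold Synge theorem genuinely needs global orientability: $\mathbb{RP}^{2n}$ is a compact, positively curved, even-dimensional, locally orientable orbifold (a manifold, so the local condition is vacuous) with $\piorb=\mathbb{Z}/2$, so invoking \cite[Corollary 2.3.6]{yeroshkin} under the hypothesis ``locally orientable'' does not go through. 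The repair is immediate with what you already have: from transverse orientability of $\g$ conclude that $M/\!/\g$ is an \emph{orientable} compact positively curved orbifold (this is how the paper proceeds, pushing a transverse volume form of $\g$ down to the quotient), and then apply the orientable even-dimensional case of the orbifold Synge theorem before descending to $M/\overline{\f}$ via the connected torus action, as you and the paper both do.
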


\begin{proof}
By deforming $\f$ via Theorem \ref{theorem: deformation} we obtain a closed Riemannian foliation $\g$ of $M$ with $\sec_\g >0$ and $\codim\g=\codim\f$. Recall that $\f=\Upsilon^{-1}(\f_H)$ and $\g=\Upsilon^{-1}(\f_K)$, for subgroups $H,K<\mathbb{T}^N$ acting on the Haefliger--Salem orbifold $\mathcal{O}$.

Suppose $\codim\f$ is even and $\f$ is transversely orientable. Then map $\iota$ sends a transverse volume form for $\f$ into a transverse volume form for $\g$, so $\g$ is transversely orientable and $M/\!/\g$ is an orientable, compact, positively curved Riemannian orbifold. It follows that $|M/\!/\g|$ is simply connected, by the orbifold version of Synge's theorem that appears in \cite[Corollary 2.3.6]{yeroshkin}. By Theorem item \eqref{toric action on the quotient} of \ref{theorem: deformation} we have
$$\frac{|M/\!/\g|}{\mathbb{T}^d}\cong \frac{M}{\overline{\f}},$$
therefore $M/\overline{\f}$ simply connected, since it is the quotient of a simply connected space by the action of a connected, compact Lie group (see \cite[Example 4]{armstrong}).

Now suppose $\codim\f$ is odd and $\mathrm{Hol}(L)$ consists of germs of maps that preserve transverse orientation, for each $L\in \f$. Let $Hx$ be the orbit on $\mathcal{O}$ corresponding to $L$. For an $H_x$-invariant chart $(\widetilde{U},\Gamma_x,\phi)$ around $x$ there is an extension $\widetilde{H}_x$ of $H_x$ by $\Gamma_x$ that acts on $\widetilde{U}$ with $\widetilde{U}/\widetilde{H}_x\cong U/H_x$ \cite[Proposition 2.12]{galazgarcia}. The group $\widetilde{H}_x$ is the holonomy group of the leaf $Hx\in\f_H$, hence, through the equivalence $\mathscr{H}(\f)=\mathscr{H}(\f_H)$, the hypothesis on $\mathrm{Hol}(L)$ implies that $\widetilde{H}_x$ (and in particular $\Gamma_x$) preserves transverse orientation. Passing to the subgroup $K<\mathbb{T}^N$ that defines $\g$, we thus obtain that $\widetilde{K}_x$, an extension of $K_x<\mathbb{T}^N$ by $\Gamma_x$, also preserves transverse orientation. Hence $\mathcal{O}/\!/\f_K\cong M/\!/\g$ is an odd-dimensional, compact, locally orientable, positively curved orbifold, so again it follows from \cite[Corollary 2.3.6]{yeroshkin} that $M/\!/\g$ is orientable (thus $\g$ is transversely orientable). The Riemannian volume form of $M/\!/\g$ is $\mathbb{T}^d$-invariant, hence induces a transverse volume form for $\f$, by item \eqref{symmetries and basic tensors} of Theorem \ref{theorem: deformation}.
\end{proof}

Finally, let us prove the transverse version of Milnor's theorem on the growth of the fundamental group.

\begin{theorem}\label{theorem: negative curvature exponential growth}
Let $\f$ be a Killing foliation on a compact manifold $M$ such that $\sec_\f<0$. Then $\f$ is closed and $\pi_1(\f)$ grows exponentially. In particular, $\pi_1(M)$ grows exponentially.
\end{theorem}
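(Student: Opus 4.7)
The plan is to chain the transverse Bochner theorem (Theorem \ref{theorem: transverse bochner}) with the orbifold Milnor theorem (Theorem \ref{theorem: Milnor growth for orbifolds}), then propagate exponential growth through the surjection \eqref{surjection fundamental groups}. The heavy lifting is already done by those two results; the task is essentially to assemble them.

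First, I would upgrade the strict inequality $\sec_\f<0$ to a uniform bound. Since $\sec_\f$ is $\f$-basic and $M/\overline{\f}$ is compact, there exists $c>0$ with $\sec_\f\leq -c<0$. Summing $q-1$ transverse $2$-plane curvatures, with $q=\codim\f$, gives $\ric_\f\leq -(q-1)c<0$, so Theorem \ref{theorem: transverse bochner} immediately forces $\f$ to be closed. Now that $\f$ is closed on a compact manifold, each leaf is compact and its holonomy group is finite (it acts discretely and isometrically on a transverse slice, hence is both discrete and compact). By Example \ref{example: orbifold structure on the leaf space}, the leaf space $M/\!/\f$ then carries a compact Riemannian orbifold structure in which the transverse metric descends to a genuine orbifold Riemannian metric with $\sec_{M/\!/\f}<0$.

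Applying Theorem \ref{theorem: Milnor growth for orbifolds} to the compact negatively curved orbifold $M/\!/\f$ yields that $\piorb(M/\!/\f)$ has exponential growth. Because $\f$ is closed with finite holonomies, the holonomy pseudogroup $\mathscr{H}_\f$ is equivalent to the structure pseudogroup $\mathscr{H}_{M/\!/\f}$ of the quotient orbifold, so $\pi_1(\f)\cong\piorb(M/\!/\f)$ (as recalled in the introduction), and hence $\pi_1(\f)$ has exponential growth. For the final assertion, the natural surjection $\pi_1(M)\to\pi_1(\f)$ of \eqref{surjection fundamental groups} together with the finite generation of $\pi_1(M)$ (a consequence of $M$ being a compact manifold) transfers exponential growth: a finite generating set of $\pi_1(M)$ maps to one of $\pi_1(\f)$, and the word-length in the quotient is bounded above by the word-length upstairs, so exponential growth of $\pi_1(\f)$ forces exponential growth of $\pi_1(M)$.

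The argument has no genuine obstacle; the only points that merit care are (i) the identification $\pi_1(\f)\cong\piorb(M/\!/\f)$ in the closed case, which follows from matching the two defining pseudogroups, and (ii) the fact that the orbifold sectional curvature of $M/\!/\f$ literally coincides with $\sec_\f$, which is immediate from the definition since the quotient map is a Riemannian submersion on local orbifold charts.
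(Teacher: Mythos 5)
Your proposal is correct and follows essentially the same route as the paper: establish closedness by a Bochner-type argument, apply the orbifold Milnor theorem (Theorem \ref{theorem: Milnor growth for orbifolds}) to the negatively curved compact orbifold $M/\!/\f$ via the identification $\pi_1(\f)\cong\piorb(M/\!/\f)$, and transfer exponential growth to $\pi_1(M)$ through the surjection \eqref{surjection fundamental groups}. The only cosmetic difference is that you obtain closedness by converting $\sec_\f\leq -c$ into a uniform negative bound on $\ric_\f$ and citing Theorem \ref{theorem: transverse bochner}, whereas the paper reruns the same deformation-plus-orbifold-Bochner contradiction directly with the sectional hypothesis (a positive-dimensional isometric torus action on a negatively curved compact quotient orbifold); both rest on identical machinery.
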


\begin{proof}
If $\f$ were not closed then, by Theorem \ref{theorem: deformation}, we would be able to deform $\f$ into a closed foliation $\g$ with $\sec_\g<0$, and $M/\!/\g$ would be a negatively curved orbifold admitting an isometric action of $\mathbb{T}^d$, $d>0$, contradicting Theorem \ref{teo: bochner for orbifolds}. So $\f$ is closed and hence
$$\pi_1(\f)=\piorb(M/\!/\f)$$
has exponential growth, by Theorem \ref{theorem: Milnor growth for orbifolds}. In particular, from the surjection \eqref{surjection fundamental groups} it follows that $\pi_1(M)$ also grows exponentially.
\end{proof}

Gromov establishes in \cite[p. 12]{gromov3} an upper bound for the total sum of Betti numbers for negatively curved manifolds in terms of their dimension and volume. This result was generalized for orbifolds by I.~Samet in \cite[Theorem 1.1]{samet} (recall that every orbifold with negative sectional curvature is a quotient of a Hadamard manifold by a discrete group of isometries, see \cite[Corollary 2.16]{bridson}). Combining it with Theorem \ref{theorem: negative curvature exponential growth} we get:

\begin{corollary}
There exists a constant $C=C(q)$ such that, for any Killing foliation $\f$ on a compact manifold $M$ with $\sec_\f<0$, say $-k^2\leq \sec_\f <0$, one has
$$\sum_{i=1}^q b_i(\f)\leq Ck^q\vol(M/\!/\f).$$
\end{corollary}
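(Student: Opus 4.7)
The plan is to reduce the statement to Samet's bound for negatively curved orbifolds by passing to the leaf space. First, by Theorem \ref{theorem: negative curvature exponential growth}, the hypothesis $\sec_\f<0$ forces $\f$ to be closed, so that $M/\!/\f$ is a well-defined Riemannian orbifold (its metric being induced by the transverse metric $\mathrm{g}^T$ used to compute $\sec_\f$). In particular, the sectional curvature of the orbifold satisfies $-k^2\leq \sec_{M/\!/\f}<0$, since the local defining submersions of $\f$ become Riemannian submersions whose horizontal sectional curvature is $\sec_\f$, and this equals the sectional curvature of the quotient.

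Next I would identify the basic Betti numbers of $\f$ with the orbifold Betti numbers of $M/\!/\f$. Since $\f$ is closed with finite holonomy groups (finiteness of the holonomy groups being automatic for closed Riemannian foliations on compact manifolds), Proposition \ref{prop: basic cohomology of closed foliations} yields $H^i(\f)\cong H^i(M/\!/\f)$. Combining this with the orbifold De Rham theorem (Theorem \ref{theorem: Satake}), I get
\[
b_i(\f)=\dim H^i(M/\!/\f)=\dim H^i(|M/\!/\f|,\mathbb{R}),
\]
so the sum $\sum_{i=1}^q b_i(\f)$ equals the sum of ordinary Betti numbers of the underlying topological space of the orbifold $M/\!/\f$.

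At this point I would invoke Samet's theorem \cite[Theorem 1.1]{samet}, which provides a constant $C=C(q)$ such that every $q$-dimensional compact Riemannian orbifold $\mathcal{O}$ with $-k^2\leq \sec_{\mathcal{O}}<0$ satisfies $\sum_{i=1}^q b_i(|\mathcal{O}|)\leq C k^q \vol(\mathcal{O})$. Applying this to $\mathcal{O}=M/\!/\f$ (which is indeed a quotient of a Hadamard manifold by a discrete isometry group, per the parenthetical remark in the paper) and combining with the identification in the previous paragraph yields the desired estimate.

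The only potentially delicate step is checking that the hypotheses of Samet's theorem are genuinely met, i.e.\ that $M/\!/\f$ is compact and that its sectional curvature really lies in $[-k^2,0)$ throughout the orbifold (including behaviour near singular strata). Compactness of $M/\!/\f=M/\f$ is immediate from compactness of $M$, and the curvature bound is simply a pointwise statement about the $\mathscr{H}_\f$-invariant transverse metric, which descends identically to the orbifold atlas; so no further work is needed.
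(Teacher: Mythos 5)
Your argument is correct and is essentially the paper's own: the paper obtains this corollary precisely by combining Theorem \ref{theorem: negative curvature exponential growth} (which gives closedness of $\f$, so that $M/\!/\f$ is a compact Riemannian orbifold with $-k^2\leq\sec<0$ and with $b_i(\f)=b_i(|M/\!/\f|)$ via Proposition \ref{prop: basic cohomology of closed foliations} and Theorem \ref{theorem: Satake}) with Samet's bound \cite[Theorem 1.1]{samet}. Your extra checks (finite holonomy of the closed leaves, descent of the curvature bound, compactness) are exactly the routine points the paper leaves implicit.
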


\section{Equivariant basic cohomology under deformations}\label{section: beq under deformations}

We have proved in \cite[Theorem 7.4]{caramello} that the basic Euler characteristic $\chi(\f_t)$ is constant in $t$ for a regular deformation $\f_t$. Despite this fact, the basic Betti numbers are not preserved, as the following example shows.

\begin{example}\label{exe: nozawa}
We are grateful to H.~Nozawa for communicating this example to us. Consider $M=\mathbb{S}^3\times\mathbb{S}^1$ with the free action of $\mathbb{T}^2=\mathbb{S}^1\times\mathbb{S}^1$ given by
$$((s_1,s_2),((z_1,z_2),z))\longmapsto ((s_1z_1,s_1z_2), s_2z).$$
Let $\f$ be the homogeneous Riemannian foliation of $M$ induced by a dense $1$-parameter subgroup of $\mathbb{T}^2$. In this case the construction of the orbifold $(\mathcal{O}_\f,\mathbb{T}^N)$ is trivial, that is, $\mathcal{O}_\f=M$ and $H$ is the $1$-parameter subgroup defining $\f$. It is clear that $\f$ can be regularly deformed to both the foliations $\g_1$ and $\g_2$, induced by the actions of $\mathbb{S}^1\times \{1\}$ and $\{1\}\times \mathbb{S}^1$, respectively. But $H(\g_1)=H(M/\!/\g^1)=H(\mathbb{S}^2\times \mathbb{S}^1)$, while $H(\g_2)=H(M/\!/\g_2)=H(\mathbb{S}^3)$. Hence $b_i(\g_1)\neq b_i(\g_2)$ for $i=1,2$.
\end{example}

Our next goal in this section is to show that, in spite of this, the equivariant basic cohomology is preserved throughout a regular deformation $\f_t$. To make this more precise we need to clarify how $\mathfrak{a}$ acts transversely on $\g$. We retain the notation of Section \ref{section: deformations}. For a Killing foliation $\f$ on a compact manifold $M$, consider the Haefliger--Salem construction $(\mathcal{O},H<\mathbb{T}^N)$ and define $\f_t:=\Upsilon^*(\f_{H(t)})$. Since the holonomy pseudogroups of $\f_t$ and $\f_{H(t)}$ are equivalent, there is an identification between $\f_t$-transverse vector fields on $M$ and $\f_{H(t)}$-transverse vector fields on $\mathcal{O}$. So we only have to show that $\mathfrak{a}$ acts transversely on $\f_{H(t)}$.

The $\mathfrak{a}$-action on $\mathscr{H}(\f)$ is precisely the action of the structural algebra of $\mathscr{H}(\f)$ as a complete pseudogroup of local isometries, which is characterized by the vector fields on $T_\f$ whose local flows belong to the $C^1$-closure $\overline{\mathscr{H}(\f)}$ (see \cite[\S 2.5]{haefliger2} for more details). Via the equivalence $\mathscr{H}(\f)\cong\mathscr{H}(\f_H)$ it coincides with the action of the structural algebra of $\mathscr{H}(\f_H)$, which in turn is the algebra of vector fields on $T_\f$ induced by the fundamental vector fields of the action of $\mathbb{T}^N$ on $\mathcal{O}$. In other words, the transverse $\mathfrak{a}$-action on $\f$ corresponds to the natural transverse $\mathfrak{t}/\mathfrak{h}$-action on $\f_H$, where $\mathfrak{t}$ is the Lie algebra of $\mathbb{T}^N$, similarly to the case of a homogeneous foliation on a manifold (see Example \ref{example: homogeneous foliations are killing}).

We indeed have a natural transverse action of $\mathfrak{t}/\mathfrak{h}(t)$ on $\f_{H(t)}$ for each $t$. Since all those Lie algebras are (non-canonically) isomorphic to $\mathfrak{a}$, we define an $\mathfrak{a}$-action on $\f_{H(t)}$, and hence on $\f_t$, by passing through an isomorphism $\nu_t:\mathfrak{t}/\mathfrak{h}(t)\to\mathfrak{a}$. This amounts to identifying $\mathfrak{a}$ with the subalgebra of $\mathfrak{t}$ complementary to each $\mathfrak{h}(t)$ that we used when constructing regular deformations (see Section \ref{section: deformations}), which by abuse of notation we also denoted by $\mathfrak{a}<\mathfrak{t}$. That is, $\nu_t$ is the map that sends $[X]\in\mathfrak{t}/\mathfrak{h}(t)$ to its unique representative in $\mathfrak{a}<\mathfrak{t}$.

In particular, the $\mathbb{T}^d$-action on $M/\!/\g$ appearing in item \eqref{toric action on the quotient} of Theorem \ref{theorem: deformation} is given by the natural action of $\mathbb{T}^N/K$ on $\mathcal{O}/\!/K\cong M/\!/\g$, so the transverse $\mathfrak{a}$-action on $\g$ is the lift of the induced infinitesimal action of $\mathrm{Lie}(\mathbb{T}^d)=\mathfrak{t}/\mathfrak{k}\cong\mathfrak{a}$ on $ M/\!/\g$. We sum this up in the following.

\begin{proposition}\label{prop: induced transverse a action on G}
The structural algebra $\mathfrak{a}$ of $\f$ acts transversely on $\f_t$, for each $t$, and its induced action on the quotient orbifold $M/\!/\g$, for the closed foliation $\g=\f_1$, integrates to the $\mathbb{T}^d$-action given by item (ii) of Theorem \ref{theorem: deformation}.
\end{proposition}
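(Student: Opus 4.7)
The plan is essentially to assemble the proof from the identifications already laid out in the discussion preceding the statement; the proposition is largely a recapitulation organized for later use, so the task is to verify coherence of those identifications rather than to prove something substantively new.

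First, I would establish the transverse $\mathfrak{a}$-action on $\f_t$. On the Haefliger--Salem orbifold $\mathcal{O}$, the $\mathbb{T}^N$-action has fundamental vector fields that, via the inclusion $\mathfrak{t}\to\mathfrak{X}(\mathcal{O})$, pass to the quotient $\mathfrak{t}/\mathfrak{h}(t)$ to yield a transverse action on $\f_{H(t)}$. Since the construction of the regular deformation was arranged so that $\mathfrak{a}<\mathfrak{t}$ is a subalgebra complementary to every $\mathfrak{h}(t)$, the composition of this action with $\nu_t:\mathfrak{t}/\mathfrak{h}(t)\to\mathfrak{a}$ gives a natural transverse $\mathfrak{a}$-action on $\f_{H(t)}$. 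Pulling back along the transverse map $\Upsilon:M\to\mathcal{O}$ and using that $\f_t=\Upsilon^*(\f_{H(t)})$, so that $\mathscr{H}_{\f_t}\cong\mathscr{H}_{\f_{H(t)}}$, transports this to a transverse $\mathfrak{a}$-action on $\f_t$. For $t=0$ one must check that this action coincides with the natural Molino structural action of $\mathfrak{a}$ on $\f$; this follows because both are characterized as the action of the structural algebra of the complete pseudogroup of isometries $\mathscr{H}_{\f}\cong\mathscr{H}_{\f_H}$ (via the vector fields whose local flows lie in the $C^1$-closure of the pseudogroup), as recalled in the paragraph before the statement.

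Second, I would identify the integrated action on $M/\!/\g$. Setting $t=1$, we have $\f_1=\g$, $H(1)=K$ closed, and $\mathcal{O}/\!/K\cong M/\!/\g$. Because $\mathbb{T}^N$ is abelian, its action on $\mathcal{O}$ commutes with $K$ and descends to an action of $\mathbb{T}^N/K\cong\mathbb{T}^d$ on $\mathcal{O}/\!/K\cong M/\!/\g$; by construction this is precisely the $\mathbb{T}^d$-action of Theorem \ref{theorem: deformation}(ii). Its infinitesimal generator is the action of $\mathrm{Lie}(\mathbb{T}^N/K)=\mathfrak{t}/\mathfrak{k}$, which under $\nu_1$ is identified with $\mathfrak{a}$, and this is exactly the infinitesimal action on the quotient induced by the transverse $\mathfrak{a}$-action on $\g$ constructed in the previous step.

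The main (minor) obstacle is verifying that the chain of identifications $\mathfrak{t}/\mathfrak{h}(t)\xrightarrow{\nu_t}\mathfrak{a}$ is compatible with the descent to the quotient at $t=1$, namely that $\nu_1$ agrees with the canonical projection $\mathfrak{t}\to\mathfrak{t}/\mathfrak{k}$ when restricted to $\mathfrak{a}$. This is immediate from the definition: $\nu_t$ sends $[X]\in\mathfrak{t}/\mathfrak{h}(t)$ to its unique representative in $\mathfrak{a}$, and since $\mathfrak{a}$ is complementary to $\mathfrak{k}=\mathfrak{h}(1)$, this is precisely the inverse of the restriction $\mathfrak{a}\hookrightarrow\mathfrak{t}\twoheadrightarrow\mathfrak{t}/\mathfrak{k}$. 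Hence the two descriptions of the transverse $\mathfrak{a}$-action on $\g$—as pulled back via $\Upsilon^*$ from $\f_K$, and as lifted from $M/\!/\g$—agree, completing the proof.
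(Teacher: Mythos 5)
Your proposal is correct and follows essentially the same route as the paper: identifying the structural $\mathfrak{a}$-action with the $\mathbb{T}^N$-fundamental vector fields through the pseudogroup equivalence $\mathscr{H}_{\f}\cong\mathscr{H}_{\f_H}$, transporting the $\mathfrak{t}/\mathfrak{h}(t)$-action to $\f_t$ via $\nu_t$ and the equivalence $\mathscr{H}_{\f_t}\cong\mathscr{H}_{\f_{H(t)}}$, and at $t=1$ matching the $\mathfrak{t}/\mathfrak{k}\cong\mathfrak{a}$ infinitesimal action with the $\mathbb{T}^N/K\cong\mathbb{T}^d$-action on $\mathcal{O}/\!/K\cong M/\!/\g$. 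Your added check that $\nu_1$ is inverse to the restriction of the projection $\mathfrak{t}\to\mathfrak{t}/\mathfrak{k}$ to $\mathfrak{a}$ is a harmless explicit verification of a point the paper leaves implicit.
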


Now that Proposition \ref{prop: induced transverse a action on G} allows us to consider $H_\mathfrak{a}(\f_t)$ we can state the following.

\begin{theorem}\label{thrm: invariance of H_a under deformations}
Let $\f$ be a Killing foliation of a compact manifold $M$ and let $\f_t$ be a regular deformation of $\f$. Then for each $t$ there is an $\mathbb{R}$-algebra isomorphism
$$H_{\mathfrak{a}}(\f)\cong H_{\mathfrak{a}}(\f_t).$$
\end{theorem}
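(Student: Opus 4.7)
The plan is to transfer the entire computation to the Haefliger--Salem orbifold $\mathcal{O}$, where both $\f$ and each $\f_t$ appear as orbit foliations of subgroups of a \emph{common} torus $\mathbb{T}^N$. Using that $\f_t = \Upsilon^{*}(\f_{H(t)})$ and the equivalence $\mathscr{H}_{\f_t} \cong \mathscr{H}_{\f_{H(t)}}$, Proposition \ref{proposition: transverse action projects to holonomy pseudogroup} identifies $\Omega(\f_t) \cong \Omega(\f_{H(t)})$ as $\mathfrak{a}^{\star}$-algebras, once we verify that the $\mathfrak{a}$-action of Proposition \ref{prop: induced transverse a action on G} corresponds to the restriction of the natural $\mathfrak{t}$-action on $\mathcal{O}$ to $\mathfrak{a}\subset\mathfrak{t}$. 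This is tautological from the definition of $\nu_t$ as the inverse of the composite $\mathfrak{a}\hookrightarrow\mathfrak{t}\twoheadrightarrow\mathfrak{t}/\mathfrak{h}(t)$. It therefore suffices to produce, for each $t$, a natural $\mathbb{R}$-algebra isomorphism
$$H_{\mathfrak{a}}(\f_{H(t)}) \cong H_{\mathfrak{t}}(\Omega(\mathcal{O})),$$
whose right-hand side is manifestly independent of $t$; composing the isomorphism for parameter $t$ with the inverse of the one for $t=0$ then yields $H_{\mathfrak{a}}(\f)\cong H_{\mathfrak{a}}(\f_t)$.

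The decisive tool is the commuting actions principle (Proposition \ref{prop: commuting actions principle}). Since $\mathfrak{t}$ is abelian and $\mathfrak{a}$ was chosen as a vector-space complement of $\mathfrak{h}(t)$ in $\mathfrak{t}$ for every $t$, the decomposition $\mathfrak{t}=\mathfrak{h}(t)\oplus\mathfrak{a}$ is a direct product of Lie algebras. Proposition \ref{prop: isometric action on orbifold is of type C}, applied to the locally free action of the Lie subgroup $H(t)<\mathbb{T}^N$ on $\mathcal{O}$, produces an $\mathfrak{h}(t)$-connection $\theta_t\in\Omega^{1}(\mathcal{O})\otimes\mathfrak{h}(t)$; inspection of its construction shows we may take $\theta_t$ to be $\mathbb{T}^N$-invariant, since it is built from a $\mathbb{T}^N$-invariant Riemannian metric on $\mathcal{O}$. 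In particular $\theta_t$ is $\mathfrak{a}$-invariant, so Proposition \ref{prop: commuting actions principle} delivers the $\sym(\mathfrak{a}^{*})^{\mathfrak{a}}$-algebra (hence $\mathbb{R}$-algebra) isomorphism
$$H_{\mathfrak{a}}\bigl(\Omega(\mathcal{O})_{\bas\mathfrak{h}(t)}\bigr)\cong H_{\mathfrak{t}}(\Omega(\mathcal{O})).$$
Because $\f_{H(t)}$ is the orbit foliation of a locally free infinitesimal $\mathfrak{h}(t)$-action, a form on $\mathcal{O}$ is $\f_{H(t)}$-basic precisely when it is $\mathfrak{h}(t)$-basic in the $\mathfrak{g}^{\star}$-algebra sense, and this identification is $\mathfrak{a}$-equivariant. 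Thus $H_{\mathfrak{a}}(\f_{H(t)})\cong H_{\mathfrak{t}}(\Omega(\mathcal{O}))$, and the conclusion follows.

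The main obstacle I anticipate is bookkeeping rather than genuinely new mathematics: one must verify that the several incarnations of the $\mathfrak{a}$-action—the structural algebra of $\mathscr{H}_\f$, the restriction to $\mathfrak{a}\subset\mathfrak{t}$ of the $\mathbb{T}^N$-action on $\mathcal{O}$, the $\mathfrak{t}/\mathfrak{h}(t)$-action on $\f_{H(t)}$ transported through $\nu_t$, and the pulled-back action on $\f_t$—coincide under the natural identifications, so that the resulting isomorphism really relates $H_{\mathfrak{a}}(\f)$ to $H_{\mathfrak{a}}(\f_t)$ rather than their underlying vector spaces twisted by some unnoticed reparametrisation of $\mathfrak{a}$. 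The careful selection of $\mathfrak{a}$ as one \emph{fixed} subalgebra of $\mathfrak{t}$ complementary to every $\mathfrak{h}(t)$, which is baked into the construction of the regular deformation recalled in Section \ref{section: deformations}, is precisely what makes this compatibility automatic.
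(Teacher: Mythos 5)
Your proposal is correct and follows essentially the same route as the paper: transfer to the Haefliger--Salem orbifold via the pseudogroup equivalence, identify $\Omega(\f_{H(t)})=\Omega_{\bas\mathfrak{h}(t)}(\mathcal{O})$, choose an $\mathfrak{a}$-invariant connection for the locally free $H(t)$-action, and apply the commuting actions principle to obtain isomorphisms $j_t$ onto the $t$-independent algebra $H_{\mathfrak{t}}(\mathcal{O})$, composing $j_t^{-1}\circ j_0$. Your justification that $\theta_t$ may be taken $\mathbb{T}^N$-invariant (built from an invariant metric) is a harmless variant of the paper's averaging argument.
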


\begin{proof}
Using Proposition \ref{proposition: transverse action projects to holonomy pseudogroup} and the equivalence $\mathscr{H}_{\f_t}\cong\mathscr{H}_{\f_{H(t)}}$ we have
$$H_{\mathfrak{a}}(\f_t)\cong H_{\mathfrak{a}}(\mathscr{H}_{\f_t})\cong H_{\mathfrak{a}}(\mathscr{H}_{\f_{H(t)}})\cong H_{\mathfrak{a}}(\f_{H(t)})$$
as $\sym(\mathfrak{a}^*)$-algebras. Notice moreover that $H_{\mathfrak{a}}(\f_{H(t)})= H_{\mathfrak{a}}(\Omega_{\bas\mathfrak{h}(t)}(\mathcal{O}))$, since $\Omega_{\bas\mathfrak{h}(t)}(\mathcal{O})=\Omega(\f_{H(t)})$. We only need, thus, to exhibit an $\mathbb{R}$-algebra isomorphism $H_{\mathfrak{a}}(\Omega_{\bas\mathfrak{h}(0)}(\mathcal{O}))\cong H_{\mathfrak{a}}(\Omega_{\bas\mathfrak{h}(t)}(\mathcal{O}))$ for each $t$.

By Proposition \ref{prop: isometric action on orbifold is of type C}, let $\theta_t\in\Omega^1(\mathcal{O})\otimes\mathfrak{h}$ be a connection for the $H(t)$-action on $\mathcal{O}$. Since the infinitesimal action of $\mathfrak{a}$ on each $\f_t$ is induced by the action of a compact subgroup $A<\mathbb{T}^N$ we can suppose $\theta$ is $\mathfrak{a}$-invariant, by averaging it if necessary. From Proposition \ref{prop: commuting actions principle} it then follows that the inclusion $(\sym(\mathfrak{a}^*)\otimes \Omega_{\bas\mathfrak{h}(t)})^{\mathfrak{a}}\to (\sym(\mathfrak{t}^*)\otimes \Omega(\mathcal{O}))^{\mathfrak{t}}$ induces an $\sym(\mathfrak{a}^*)$-algebra isomorphism
$$H_{\mathfrak{a}}(\Omega_{\bas\mathfrak{h}(t)}(\mathcal{O})) \stackrel{j_t}{\longrightarrow} H_{\mathfrak{h}(t)\times\mathfrak{a}}(\mathcal{O}) = H_{\mathfrak{t}}(\mathcal{O}).$$
For each $t$, the map $j_t^{-1}\circ j_0: H_{\mathfrak{a}}(\Omega_{\bas\mathfrak{h}(0)}(\mathcal{O}))\to H_{\mathfrak{a}}(\Omega_{\bas\mathfrak{h}(t)}(\mathcal{O}))$ is clearly both an $\mathbb{R}$-linear and a ring isomorphism, hence it is the desired $\mathbb{R}$-algebra isomorphism.
\end{proof}

\begin{remark}
One does not have $H_{\mathfrak{a}}(\f)\cong H_{\mathfrak{a}}(\f_t)$ as $\sym(\mathfrak{a}^*)$-algebras, in general. In fact, each $j_t$ is an $\sym(\mathfrak{a}^*)$-algebra isomorphism, but with respect to the decomposition $\mathfrak{t}=\mathfrak{h}(t)\times\mathfrak{a}$. For each $t$ this decomposition induces a different $\sym(\mathfrak{a}^*)$-module structure on $H_{\mathfrak{t}}(\mathcal{O})$, since the extension of a polynomial on $\mathfrak{a}$ to a polynomial on $\mathfrak{t}$ vanishes on $\mathfrak{h}(t)$. Hence $j_t^{-1}\circ j_0$ fails to preserve the module structure.
\end{remark}

In particular, for $t=1$ it follows from Theorem \ref{thrm: invariance of H_a under deformations} that $H_{\mathfrak{a}}(\f)\cong H_{\mathfrak{a}}(\g)$, for a closed foliation $\g$ arbitrarily close to $\f$.

\begin{corollary}\label{cor: bec in terms of the approximated quotient orbifold}
Let $\f$ be a Killing foliation of a compact manifold $M$. Then there is a closed approximation $\g$ of $\f$ such that $M/\!/\g$ admits a $\mathbb{T}^d$-action, $d=\dim\mathfrak{a}$, and
$$H_\mathfrak{a}(\f)\cong H_{\mathbb{T}^d}(M/\!/\g),$$
as $\mathbb{R}$-algebras.
\end{corollary}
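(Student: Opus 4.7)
The plan is to assemble this as a chain of four isomorphisms, all of which are already either proved in the paper or follow almost formally from results above.

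First I would invoke Theorem \ref{thrm: invariance of H_a under deformations} to obtain an $\mathbb{R}$-algebra isomorphism $H_{\mathfrak{a}}(\f)\cong H_{\mathfrak{a}}(\g)$ for $\g=\f_1$, the closed approximation produced by the regular deformation. Theorem \ref{theorem: deformation}(ii) then guarantees that $M/\!/\g$ carries an effective isometric $\mathbb{T}^d$-action, and Proposition \ref{prop: induced transverse a action on G} tells us that the transverse $\mathfrak{a}$-action on $\g$ is precisely the lift of the induced infinitesimal action of $\mathrm{Lie}(\mathbb{T}^d)\cong \mathfrak{a}$ on $M/\!/\g$.

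Next, because $\g$ is a closed foliation (with all leaves compact and of finite holonomy), Proposition \ref{prop: basic cohomology of closed foliations} yields a canonical isomorphism of differential complexes $\pi^*:\Omega(M/\!/\g)\to\Omega(\g)$. I would check that this is moreover an isomorphism of $\mathfrak{a}^\star$-algebras: by Proposition \ref{prop: induced transverse a action on G}, every transverse representative of $X\in\mathfrak{a}$ is $\pi$-related to the fundamental vector field of $X$ on $M/\!/\g$, so $\pi^*$ intertwines $\iota_X$ and $\mathcal{L}_X$ on both sides (this is the same verification used to justify Proposition \ref{proposition: transverse action projects to holonomy pseudogroup}). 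Passing to the Cartan models of equivariant cohomology, $\pi^*$ induces an $\mathbb{R}$-algebra (in fact, $\sym(\mathfrak{a}^*)^\mathfrak{a}$-algebra) isomorphism $H_{\mathfrak{a}}(M/\!/\g)\cong H_{\mathfrak{a}}(\g)$.

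Finally, since $\mathbb{T}^d$ is a connected, compact Lie group acting smoothly on the orbifold $M/\!/\g$ and its Lie algebra is exactly $\mathfrak{a}$, Theorem \ref{thrm: equivariant De Rham theorem for orbifolds} supplies an $\mathbb{R}$-algebra isomorphism $H_{\mathfrak{a}}(M/\!/\g)\cong H_{\mathbb{T}^d}(M/\!/\g)$. Composing the three isomorphisms,
\[
H_{\mathfrak{a}}(\f)\;\cong\;H_{\mathfrak{a}}(\g)\;\cong\;H_{\mathfrak{a}}(M/\!/\g)\;\cong\;H_{\mathbb{T}^d}(M/\!/\g),
\]
yields the corollary. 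The only step requiring any real verification, rather than direct quotation, is the $\mathfrak{a}^\star$-equivariance of $\pi^*$ in the middle arrow; this is the main (mild) obstacle, and it reduces to the naturality of $d$, $\iota$ and $\mathcal{L}$ under the $\pi$-relatedness of foliate lifts of transverse vector fields, exactly as in the proof of Proposition \ref{proposition: transverse action projects to holonomy pseudogroup}.
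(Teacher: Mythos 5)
Your proposal is correct and follows essentially the same route as the paper: the same chain $H_{\mathfrak{a}}(\f)\cong H_{\mathfrak{a}}(\g)\cong H_{\mathfrak{a}}(M/\!/\g)\cong H_{\mathbb{T}^d}(M/\!/\g)$ via Theorem \ref{thrm: invariance of H_a under deformations}, the identification of basic forms of the closed foliation with orbifold forms (the paper quotes Proposition \ref{proposition: transverse action projects to holonomy pseudogroup}, which already packages the $\mathfrak{a}^\star$-equivariance you verify by hand), and Proposition \ref{prop: induced transverse a action on G} together with Theorem \ref{thrm: equivariant De Rham theorem for orbifolds}. Your explicit check that $\pi^*$ intertwines $\iota_X$ and $\mathcal{L}_X$ is a slightly more detailed rendering of the same middle step, not a different argument.
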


\begin{proof}
We have $H_{\mathfrak{a}}(\f) \cong H_{\mathfrak{a}}(\g)$ from Theorem \ref{thrm: invariance of H_a under deformations}, $H_{\mathfrak{a}}(\g)\cong H_{\mathfrak{a}}(M/\!/\g)$ from Proposition \ref{proposition: transverse action projects to holonomy pseudogroup} and $H_{\mathfrak{a}}(M/\!/\g)\cong H_{\mathbb{T}^d}(M/\!/\g)$ from Proposition \ref{prop: induced transverse a action on G} and Theorem \ref{thrm: equivariant De Rham theorem for orbifolds}.
\end{proof}

By deforming $\f$ we obtain, therefore, a ``topological model'' for $H_\mathfrak{a}(\f)$, as a ring, given by the cohomology $H_{\mathbb{T}^d}(M/\!/\g)$ of the orbifold $M/\!/\g$. The orbifold $(\mathcal{O},\mathbb{T}^N)$, on the other hand, acts as a topological model for the full $\sym(\mathfrak{a}^*)$-algebra structure of $H_\mathfrak{a}(\f)$. In fact, by Proposition \ref{prop: commuting actions principle} and Theorem \ref{thrm: equivariant De Rham theorem for orbifolds} it follows that
$$H_{\mathfrak{a}}(\f)\cong H_{\mathfrak{a}}(\f_H)\cong H_{\mathfrak{a}}(\Omega_{\bas\mathfrak{h}}(\mathcal{O}))\cong H_{\mathbb{T}^N}(\mathcal{O}).$$
as $S(\mathfrak{a}^*)$-algebras. We state this below.

\begin{corollary}
Let $\f$ be a Killing foliation of a compact manifold $M$ and let $(\mathcal{O},H<\mathbb{T}^N)$ be its Haefliger--Salem construction. Then
$$H_{\mathfrak{a}}(\f)\cong H_{\mathbb{T}^N}(\mathcal{O})$$
as $\sym(\mathfrak{a}^*)$-algebras.
\end{corollary}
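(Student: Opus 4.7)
The statement is essentially the conclusion of the paragraph preceding it, and the plan is to chain four isomorphisms. I would organize the proof as follows.

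First, I would establish the identification $H_{\mathfrak{a}}(\f) \cong H_{\mathfrak{a}}(\f_H)$. By the Haefliger--Salem construction, the holonomy pseudogroups $\mathscr{H}_\f$ and $\mathscr{H}_{\f_H}$ are equivalent, and this equivalence is $\mathfrak{a}$-equivariant by the very definition of the transverse $\mathfrak{a}$-action on $\f$ given in Section \ref{section: beq under deformations}. Applying Proposition \ref{proposition: transverse action projects to holonomy pseudogroup} to both sides thus yields isomorphisms $\Omega(\f) \cong \Omega(\mathscr{H}_\f) \cong \Omega(\mathscr{H}_{\f_H}) \cong \Omega(\f_H)$ of $\mathfrak{a}^\star$-algebras, and hence the desired isomorphism of $\sym(\mathfrak{a}^*)^{\mathfrak{a}}$-algebras in equivariant cohomology.

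Next, I would observe that $\Omega(\f_H) = \Omega_{\bas\mathfrak{h}}(\mathcal{O})$: since $H$ acts locally freely on $\mathcal{O}$ with orbits defining $\f_H$, a form on $\mathcal{O}$ is $\f_H$-basic precisely when it is horizontal and invariant under the infinitesimal $\mathfrak{h}$-action. This equality is an equality of $\mathfrak{a}^\star$-algebras, where $\mathfrak{a}$ acts on $\Omega_{\bas\mathfrak{h}}(\mathcal{O})$ via its inclusion into $\mathfrak{t} = \mathrm{Lie}(\mathbb{T}^N)$, using the complement $\mathfrak{a} < \mathfrak{t}$ fixed in Section \ref{section: deformations}, so that $\mathfrak{t} = \mathfrak{h} \oplus \mathfrak{a}$.

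The key step is then applying the commuting actions principle (Proposition \ref{prop: commuting actions principle}) to this decomposition. The $\mathfrak{h}$-action on $\Omega(\mathcal{O})$ is locally free by Proposition \ref{prop: isometric action on orbifold is of type C}, and averaging the connection $\theta$ over the compact Abelian group $\mathbb{T}^N$ (which preserves the $\mathfrak{h}$-decomposition since $\mathbb{T}^N$ is commutative) gives a $\mathbb{T}^N$-invariant, and in particular $\mathfrak{a}$-invariant, connection. Proposition \ref{prop: commuting actions principle} then produces an $\sym(\mathfrak{a}^*)^{\mathfrak{a}}$-algebra isomorphism
\[
H_{\mathfrak{a}}(\Omega_{\bas\mathfrak{h}}(\mathcal{O})) \cong H_{\mathfrak{h} \times \mathfrak{a}}(\mathcal{O}) = H_{\mathfrak{t}}(\mathcal{O}).
\]
Finally, Theorem \ref{thrm: equivariant De Rham theorem for orbifolds} applied to the $\mathbb{T}^N$-action on $\mathcal{O}$ gives $H_{\mathfrak{t}}(\mathcal{O}) \cong H_{\mathbb{T}^N}(\mathcal{O})$ as $\sym(\mathfrak{t}^*)^{\mathfrak{t}}$-algebras, hence in particular as $\sym(\mathfrak{a}^*)$-algebras. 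Composing the four isomorphisms yields the claim.

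The only subtlety worth double-checking is that the module structures line up: throughout the chain, the $\sym(\mathfrak{a}^*)$-action is the one induced by the inclusion $\mathfrak{a} \hookrightarrow \mathfrak{t}$ (equivalently, by the projection $\mathfrak{t} \to \mathfrak{t}/\mathfrak{h} \cong \mathfrak{a}$), which matches the structural-algebra action on $\f$ as identified in the discussion preceding Proposition \ref{prop: induced transverse a action on G}. Unlike in the remark after Theorem \ref{thrm: invariance of H_a under deformations}, here only one decomposition $\mathfrak{t} = \mathfrak{h} \oplus \mathfrak{a}$ is in play, so no ambiguity in the $\sym(\mathfrak{a}^*)$-structure arises.
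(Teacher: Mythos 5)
Your proposal is correct and follows essentially the same route as the paper: the chain $H_{\mathfrak{a}}(\f)\cong H_{\mathfrak{a}}(\f_H)= H_{\mathfrak{a}}(\Omega_{\bas\mathfrak{h}}(\mathcal{O}))\cong H_{\mathfrak{t}}(\mathcal{O})\cong H_{\mathbb{T}^N}(\mathcal{O})$ via Proposition \ref{proposition: transverse action projects to holonomy pseudogroup}, the commuting actions principle for the fixed splitting $\mathfrak{t}=\mathfrak{h}\oplus\mathfrak{a}$, and the equivariant De Rham theorem for orbifolds. Your closing remark on why the $\sym(\mathfrak{a}^*)$-structure is unambiguous here (a single decomposition, unlike in the deformation setting) is exactly the point the paper makes implicitly.
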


\section{Formal actions and basic Betti numbers}

A $\mathfrak{g}^\star$-algebra $A$ is said to be \textit{equivariantly formal} if $\sym(\mathfrak{g}^*)^\mathfrak{g}\otimes H(A)$ as $\sym(\mathfrak{g}^*)^\mathfrak{g}$-modules. Equivalently, $A$ is equivariantly formal when $H_\mathfrak{a}(A)$ is a free $\sym(\mathfrak{a}^*)$-module. In the case of a Killing foliation $\f$, if $\Omega(\f)$ is equivariantly formal as an $\mathfrak{a}^\star$-algebra with respect to the natural transverse action of the structural algebra $\mathfrak{a}$, we say for short that $\f$ is equivariantly formal. In the following example we summarize some characterizations and sufficient conditions for equivariant formality of a Killing foliation $\f$ that appear in \cite[Sections 3, 5 and 8]{goertsches}.

\begin{example}\label{exe: charracterization formality}
Let $\f$ be a Killing foliation with structural algebra $\mathfrak{a}$. The following properties are equivalent:
\begin{enumerate}[(i)]
\item $\f$ is equivariantly formal.
\item $H_\mathfrak{a}(\f)$ is a free $\sym(\mathfrak{a}^*)$-module.
\item The natural map $H_\mathfrak{a}(\f)\to H(\f)$ is surjective.
\end{enumerate}
Moreover, any of the conditions below is sufficient for them to hold, provided $\f$ is transversely orientable:
\begin{enumerate}[(i)]
\setcounter{enumi}{3}
\item $H^{\mathrm{odd}}(\f)=0$.
\item $\dim H(M^{\mathfrak{a}}/\!/\f)=\dim H(\f)$.
\item $\f$ admits a basic Morse-Bott function whose critical set is equal to $M^{\mathfrak{a}}$,
\end{enumerate}
\end{example}

Although the definition of equivariant formality of $\f$ is given in terms of the $\sym(\mathfrak{a}^*)$-module structure of $H_{\mathfrak{a}}(\f)$, there is yet another way to study this property which only takes into account the ring structure of $H_{\mathfrak{a}}(\f)$. This will be of interest to us, since the module structure of $H_{\mathfrak{a}}(\f)$ is not preserved through regular deformations. Let $M$ be a finitely generated $R$-module, where $R$ is a non-trivial Noetherian commutative local ring with identity. Then one can define its Krull dimension, $\dim_RM$, and depth (see, for instance, \cite[Appendix A]{allday}), which always satisfy $\depth M\leq\dim_RM$. When equality holds $M$ is said to be a \textit{Cohen--Macaulay module}. Analogously, the ring $R$ is a \textit{Cohen--Macaulay ring} when it is a Cohen--Macaulay module over itself. We refer to \cite[Section A.6]{allday} and \cite[Section IV.B]{serre} for more details on this topic. For us it will be sufficient to know that if the transverse $\mathfrak{a}$-action on a Killing foliation $\f$ of a compact manifold is equivariantly formal then $H_{\mathfrak{a}}(\f)$ is a Cohen--Macaulay $\sym(\mathfrak{a}^*)$-module, which in turn happens if, and only if, $H_{\mathfrak{a}}(\f)$ is a Cohen--Macaulay ring \cite[Proposition B.3]{goertsches}.

\begin{proposition}\label{prop: formality is preserved}
Let $\f$ be a Killing foliation of a compact manifold $M$ and let $\f_t$ be a regular deformation. If the transverse $\mathfrak{a}$-action on $\f$ is equivariantly formal, then so is its induced action on each $\f_t$.
\end{proposition}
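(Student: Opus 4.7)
The plan is to reduce equivariant formality to a ring-theoretic property of $H_{\mathfrak{a}}$ that is preserved by the $\mathbb{R}$-algebra isomorphism of Theorem \ref{thrm: invariance of H_a under deformations}. The key observation is that, by the discussion immediately preceding the proposition together with \cite[Proposition B.3]{goertsches}, equivariant formality of $\f$ implies (and in the Killing-foliation-on-compact-manifold setting is in fact equivalent to) the statement that $H_{\mathfrak{a}}(\f)$ is a Cohen--Macaulay ring. This collapses the module-theoretic and ring-theoretic incarnations of the Cohen--Macaulay property, and thereby allows us to detect equivariant formality from the ring structure of $H_{\mathfrak{a}}(\f)$ alone -- crucially, without reference to the $\sym(\mathfrak{a}^*)$-module structure, which is precisely the piece of data \emph{not} preserved by the deformation (see the remark following Theorem \ref{thrm: invariance of H_a under deformations}).

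Granting this characterization, the argument proceeds as follows. Assume $\f$ is equivariantly formal, so that $H_{\mathfrak{a}}(\f)$ is a Cohen--Macaulay ring. Theorem \ref{thrm: invariance of H_a under deformations} furnishes, for each $t$, an $\mathbb{R}$-algebra isomorphism $H_{\mathfrak{a}}(\f)\cong H_{\mathfrak{a}}(\f_t)$. Since Cohen--Macaulayness of a Noetherian commutative graded ring is an intrinsic ring-theoretic property -- defined in terms of its Krull dimension and its depth as a module over itself -- it is manifestly invariant under $\mathbb{R}$-algebra isomorphism. Hence $H_{\mathfrak{a}}(\f_t)$ is Cohen--Macaulay as a ring. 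Using Proposition \ref{prop: induced transverse a action on G} to equip $H_{\mathfrak{a}}(\f_t)$ with its natural $\sym(\mathfrak{a}^*)$-module structure coming from the transverse $\mathfrak{a}$-action on $\f_t$, and applying the module/ring equivalence in the reverse direction, we conclude that $\f_t$ is equivariantly formal.

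The main obstacle is legitimizing the ``reverse direction" used at the end: passing from $H_{\mathfrak{a}}(\f_t)$ being a Cohen--Macaulay \emph{ring} back to $\f_t$ being equivariantly formal (i.e., freeness of $H_{\mathfrak{a}}(\f_t)$ as a $\sym(\mathfrak{a}^*)$-module). The text explicitly states only the implication EF $\Rightarrow$ CM module and the equivalence CM module $\Leftrightarrow$ CM ring; one must invoke the finer content of \cite[Proposition B.3]{goertsches}, together with the fact that for a Killing foliation of a compact manifold the Krull dimension of $H_{\mathfrak{a}}(\f)$ over $\sym(\mathfrak{a}^*)$ attains its maximal value $\dim\mathfrak{a}$, to close the loop and extract freeness from the Cohen--Macaulay ring property. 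Once this equivalence is in place, the deformation invariance of equivariant formality is a clean consequence of the ring-invariance statement in Theorem \ref{thrm: invariance of H_a under deformations}.
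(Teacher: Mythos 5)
Your overall strategy is the same as the paper's: equivariant formality of $\f$ makes $H_{\mathfrak{a}}(\f)$ a Cohen--Macaulay ring, the $\mathbb{R}$-algebra isomorphism of Theorem \ref{thrm: invariance of H_a under deformations} transports the Cohen--Macaulay ring property to $H_{\mathfrak{a}}(\f_t)$, and one then recovers freeness over $\sym(\mathfrak{a}^*)$; the paper closes this last step by citing \cite[Theorem A.6.18]{allday}. However, the two auxiliary claims you invoke to legitimize that last step are false as stated. Equivariant formality is \emph{not} equivalent to $H_{\mathfrak{a}}(\f)$ being a Cohen--Macaulay ring, and the Krull dimension of $H_{\mathfrak{a}}(\f)$ over $\sym(\mathfrak{a}^*)$ does \emph{not} always attain $\dim\mathfrak{a}$ for a Killing foliation of a compact manifold: for a dense linear flow $\f$ on $\mathbb{T}^2$ one has $M^{\mathfrak{a}}=\emptyset$ and $H_{\mathfrak{a}}(\f)\cong H_{\mathbb{T}^2}(\mathbb{T}^2)\cong\mathbb{R}$, a Cohen--Macaulay ring of Krull dimension $0<1=\dim\mathfrak{a}$, while $\f$ is not equivariantly formal (a nonzero free $\sym(\mathfrak{a}^*)$-module is infinite-dimensional over $\mathbb{R}$). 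The same failure occurs exactly where you would need the ``fact'': for the induced $\mathfrak{a}$-action on a deformed closed foliation, e.g.\ $\g_2$ in Example \ref{exe: nozawa}, whose induced circle action on $M/\!/\g_2\cong\mathbb{S}^3$ is fixed-point free, so $H_{\mathfrak{a}}(\g_2)$ is a torsion module.

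What actually closes the loop is the equivariant formality hypothesis on $\f$ combined with the fact that Krull dimension, like Cohen--Macaulayness, is an intrinsic ring invariant: $H_{\mathfrak{a}}(\f)\cong\sym(\mathfrak{a}^*)\otimes H(\f)$ is free of positive rank, hence has Krull dimension $d=\dim\mathfrak{a}$ as a ring, and the isomorphism of Theorem \ref{thrm: invariance of H_a under deformations} therefore makes $H_{\mathfrak{a}}(\f_t)$ a Cohen--Macaulay ring of dimension $d$. Since $H_{\mathfrak{a}}(\f_t)$ is module-finite over $\sym(\mathfrak{a}^*)$ (here compactness of $M$ enters), its dimension and depth as an $\sym(\mathfrak{a}^*)$-module coincide with the ring-theoretic ones, so it is Cohen--Macaulay of maximal dimension $d$, and freeness follows from the graded Auslander--Buchsbaum formula --- this is precisely what the paper's appeal to \cite[Theorem A.6.18]{allday}, together with \cite[Proposition B.3]{goertsches}, accomplishes. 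With that replacement your argument coincides with the paper's proof.
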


\begin{proof}
If $\f$ is equivariantly formal then $H_{\mathfrak{a}}(\f)$ is a Cohen--Macaulay ring. Theorem \ref{thrm: invariance of H_a under deformations} gives us a ring isomorphism $H_{\mathfrak{a}}(\f)\cong H_{\mathfrak{a}}(\f_t)$, so $H_{\mathfrak{a}}(\f_t)$ is also a Cohen--Macaulay ring, for each $t$. We now use \cite[Theorem A.6.18]{allday} to conclude that $H_{\mathfrak{a}}(\f_t)$ is a free $\sym(\mathfrak{a}^*)$-module, hence the transverse $\mathfrak{a}$-action on $\f_t$ is equivariantly formal.
\end{proof}

Let us now show the invariance of the basic Betti numbers under regular deformations when the foliation is equivariantly formal. For this, recall that if $V$ is an $\mathbb{N}$-graded vector space such that $\dim V^k$ is finite for each $k$, its \textit{Poincaré series} is the formal power series
$$\poin_{V}(s):=\sum_{k=0}^\infty (\dim V^k)s^k.$$
We will use the fact that $\poin_{V\otimes W}(s)=\poin_{V}(s)\poin_{W}(s)$, where juxtaposition denotes the Cauchy product (see, for instance, \cite[p. 14]{mccleary}).

\begin{theorem}\label{theo: Betti numbers are invariant}
Let $\f$ be a Killing foliation of a compact manifold $M$ and let $\f_t$ be a regular deformation. If $\f$ is equivariantly formal, then $b_i(\f_t)$ is constant on $t$, for each $i$.
\end{theorem}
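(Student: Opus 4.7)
The plan is to combine Proposition \ref{prop: formality is preserved} with Theorem \ref{thrm: invariance of H_a under deformations} and a Poincaré-series computation. By Proposition \ref{prop: formality is preserved}, the regularly deformed foliation $\f_t$ is also equivariantly formal for every $t$, so $H_{\mathfrak{a}}(\f_t)$ is a free graded $\sym(\mathfrak{a}^*)$-module. Unpacking the definition of equivariant formality, this yields graded isomorphisms of graded vector spaces
$$H_{\mathfrak{a}}(\f)\cong \sym(\mathfrak{a}^*)\otimes H(\f), \qquad H_{\mathfrak{a}}(\f_t)\cong \sym(\mathfrak{a}^*)\otimes H(\f_t).$$

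Next I would turn these into an identity of Poincaré series. Using the multiplicativity $\poin_{V\otimes W}(s)=\poin_{V}(s)\poin_{W}(s)$, both decompositions give
$$\poin_{H_{\mathfrak{a}}(\f)}(s)=\poin_{\sym(\mathfrak{a}^*)}(s)\,\poin_{H(\f)}(s), \qquad \poin_{H_{\mathfrak{a}}(\f_t)}(s)=\poin_{\sym(\mathfrak{a}^*)}(s)\,\poin_{H(\f_t)}(s).$$
On the other hand, Theorem \ref{thrm: invariance of H_a under deformations} furnishes an $\mathbb{R}$-algebra isomorphism $H_{\mathfrak{a}}(\f)\cong H_{\mathfrak{a}}(\f_t)$; since the map $j_t^{-1}\circ j_0$ constructed there is assembled from inclusions of Cartan complexes which respect the total Cartan grading $2k+l$, this isomorphism is also grading-preserving. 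Hence $\poin_{H_{\mathfrak{a}}(\f)}(s)=\poin_{H_{\mathfrak{a}}(\f_t)}(s)$.

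Finally, since $\poin_{\sym(\mathfrak{a}^*)}(s)=(1-s^2)^{-d}$ with $d=\dim\mathfrak{a}$ has constant term $1$, it is invertible in $\mathbb{R}[[s]]$, so I may cancel it from both sides to conclude $\poin_{H(\f)}(s)=\poin_{H(\f_t)}(s)$. Matching coefficients gives $b_i(\f)=b_i(\f_t)$ for every $i$; running this argument with $\f$ replaced by $\f_{t_0}$ for arbitrary base-point $t_0$ shows that $b_i(\f_t)$ is constant in $t$. (Note that all series involved are well-defined since $H(\f_t)$ is finite-dimensional throughout, as $M$ is compact and each $\f_t$ is complete Riemannian.)

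The main point to verify is the claim that the isomorphism in Theorem \ref{thrm: invariance of H_a under deformations} preserves the grading. This does not follow formally from ``$\mathbb{R}$-algebra isomorphism'', but it is clear from the construction: each $j_t$ is induced by an inclusion of Cartan complexes $(\sym(\mathfrak{a}^*)\otimes \Omega_{\bas\mathfrak{h}(t)}(\mathcal{O}))^{\mathfrak{a}}\hookrightarrow(\sym(\mathfrak{t}^*)\otimes\Omega(\mathcal{O}))^{\mathfrak{t}}$, and this inclusion is homogeneous of degree zero with respect to the Cartan grading. Apart from this essentially bookkeeping step, the argument is purely formal once Proposition \ref{prop: formality is preserved} and Theorem \ref{thrm: invariance of H_a under deformations} are in hand.
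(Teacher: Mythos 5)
Your proposal is correct and follows essentially the same route as the paper: invoke Proposition \ref{prop: formality is preserved} to get equivariant formality of each $\f_t$, combine the resulting free-module decompositions with the ring isomorphism of Theorem \ref{thrm: invariance of H_a under deformations}, and cancel the factor $\poin_{\sym(\mathfrak{a}^*)}(s)$ from the Poincar\'e series identity. Your extra check that the isomorphism $j_t^{-1}\circ j_0$ preserves the grading is a sound (and welcome) explicit justification of a step the paper leaves implicit.
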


\begin{proof}
We know from Proposition \ref{prop: formality is preserved} that the $\mathfrak{a}$-action on each $\f_t$ is equivariantly formal, that is,
$$\sym(\mathfrak{a}^*)\otimes H(\f)\cong H_{\mathfrak{a}}(\f)\cong H_{\mathfrak{a}}(\f_t)\cong\sym(\mathfrak{a}^*)\otimes H(\f_t).$$
Taking the corresponding Poincaré series we obtain
$$\poin_{\sym(\mathfrak{a}^*)}(s)\poin_{H(\f)}(s)=\poin_{\sym(\mathfrak{a}^*)}(s)\poin_{H(\f_t)}(s).$$
The factor $\poin_{\sym(\mathfrak{a}^*)}(s)$ can be canceled out on both sides since $\mathbb{Z}[[s]]$ is an integral domain, hence $\poin_{H(\f)}(s)=\poin_{H(\f_t)}(s)$ and the result follows.
\end{proof}

\begin{corollary}\label{cor: basic betti numbers as orbifold}
Let $\f$ be an equivariantly formal Killing foliation of a compact manifold $M$. Then $\f$ can be approximated by a closed foliation $\g$ such that
$$b_i(\f)=b_i(M/\!/\g)=b_i(|M/\!/\g|)$$
for each $i$.
\end{corollary}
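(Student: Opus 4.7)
The plan is to assemble three ingredients, each of which is already available earlier in the paper, and chain them together. First, I would apply Theorem \ref{theo: Betti numbers are invariant} to the regular deformation $\f_t$ of $\f$ furnished by Theorem \ref{theorem: deformation}. Since $\f$ is equivariantly formal by hypothesis, the theorem gives $b_i(\f) = b_i(\f_t)$ for every $t$ and every $i$; in particular, setting $t=1$ and writing $\g = \f_1$ yields a closed foliation arbitrarily close to $\f$ with $b_i(\f) = b_i(\g)$.

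Next, since $\g$ is a closed foliation of a compact manifold, all of its leaves are compact with finite holonomy, so Proposition \ref{prop: basic cohomology of closed foliations} applies and delivers the isomorphism $H(\g) \cong H(M/\!/\g)$ of differential graded algebras. Taking dimensions in each degree gives $b_i(\g) = b_i(M/\!/\g)$, which establishes the first equality $b_i(\f) = b_i(M/\!/\g)$.

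Finally, the second equality $b_i(M/\!/\g) = b_i(|M/\!/\g|)$ is precisely the orbifold de Rham theorem, which is stated as Theorem \ref{theorem: Satake}: for any orbifold $\mathcal{O}$ one has $H^i(\mathcal{O}) \cong H^i(|\mathcal{O}|, \mathbb{R})$. Applying this with $\mathcal{O} = M/\!/\g$ finishes the proof.

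There is essentially no obstacle here, as every step is a direct citation of a previously established result in the paper; the only mild subtlety is the initial invocation of Theorem \ref{theo: Betti numbers are invariant}, which requires the hypothesis of equivariant formality of $\f$ (supplied by assumption) and the existence of a regular deformation reaching a closed foliation (supplied by Theorem \ref{theorem: deformation}). Once $t=1$ is chosen, the resulting $\g$ is automatically a closed approximation to $\f$ in the sense of Theorem \ref{theorem: deformation}, so the chain of isomorphisms goes through unimpeded.
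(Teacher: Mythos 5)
Your proposal is correct and follows exactly the paper's own argument: Theorem \ref{theo: Betti numbers are invariant} gives $b_i(\f)=b_i(\g)$ for the closed foliation $\g=\f_1$, Proposition \ref{prop: basic cohomology of closed foliations} gives $b_i(\g)=b_i(M/\!/\g)$, and Theorem \ref{theorem: Satake} gives $b_i(M/\!/\g)=b_i(|M/\!/\g|)$. No further comment is needed.
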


\begin{proof}
Theorem \ref{theo: Betti numbers are invariant} implies $b_i(\f)=b_i(\g)$ for all $i$, and we have $b_i(\g)=b_i(M/\!/\g)=b_i(|M/\!/\g|)$ from Proposition \ref{prop: basic cohomology of closed foliations} and Theorem \ref{theorem: Satake}, respectively.
\end{proof}

We can now establish Theorem \ref{theorem: Gromov intro} of the introduction.

\begin{theorem}
There exists a constant $C=C(q)$ such that, for every $q$-codimensional, equivariantly formal Killing foliation $\f$ of a compact manifold $M$ with $\sec_\f>0$, one has
$$\sum_{i=0}^q b_i(\f) \leq C.$$
\end{theorem}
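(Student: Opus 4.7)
The plan is to combine Corollary \ref{cor: basic betti numbers as orbifold} with the Gromov-type Betti number bound for Alexandrov spaces from \cite[Theorem 1]{koh}, which was highlighted in the introduction as the motivating ingredient. The equivariant formality hypothesis is exactly what is needed to translate a statement about $H(\f)$ into a statement about ordinary cohomology of an orbifold, to which the Alexandrov-space bound applies.

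First I would apply Theorem \ref{theorem: deformation} to regularly deform $\f$ into a closed foliation $\g=\f_1$ whose codimension equals $q=\codim\f$. By taking $\g$ sufficiently close to $\f$ in the sense of that theorem, the lower bound $\sec_\f>0$ is preserved, so $\sec_\g>0$ as well. Next, Corollary \ref{cor: basic betti numbers as orbifold}, whose hypotheses are satisfied thanks to equivariant formality, gives
\[
b_i(\f)=b_i(|M/\!/\g|)\qquad\text{for every }i.
\]
Thus the basic Poincaré polynomial of $\f$ is identified with the ordinary Poincaré polynomial of the underlying topological space of the quotient orbifold $M/\!/\g$.

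The orbifold $M/\!/\g$ is compact, $q$-dimensional, and has positive sectional curvature; in particular $|M/\!/\g|$ is a compact $q$-dimensional Alexandrov space of curvature bounded below (and, a fortiori, of non-negative curvature). The Gromov-type theorem for Alexandrov spaces, stated in \cite[Theorem 1]{koh}, now supplies a universal constant $C=C(q)$, depending only on the dimension, such that the total Betti number of any such space is at most $C$. Combining this bound with the identification above yields
\[
\sum_{i=0}^{q} b_i(\f) \;=\; \sum_{i=0}^{q} b_i(|M/\!/\g|) \;\leq\; C(q),
\]
as desired.

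There is really no serious obstacle here beyond assembling the pieces: the deformation technology and the equivariant De Rham identification have already reduced everything to a statement on an orbifold of the same codimension and the same curvature sign. The one point that requires a little care is to verify that taking $\g$ close enough to $\f$ indeed preserves strict positivity of the transverse sectional curvature (rather than merely non-negativity), which is precisely the content of the last sentence of Theorem \ref{theorem: deformation}; after that, the Alexandrov bound of Koh does all the remaining work.
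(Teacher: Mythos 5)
Your proposal is correct and follows essentially the same route as the paper: deform $\f$ via Theorem \ref{theorem: deformation} into a closed foliation $\g$ with $\sec_\g>0$, identify $b_i(\f)=b_i(|M/\!/\g|)$ using Corollary \ref{cor: basic betti numbers as orbifold} (where equivariant formality enters), and then apply Koh's Betti number bound for compact non-negatively curved Alexandrov spaces of dimension $q$. The only cosmetic difference is the citation form (the paper invokes Koh's Main Theorem with $k=0$ and $F=\mathbb{R}$), so no changes are needed.
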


\begin{proof}
By Theorem \ref{theorem: deformation} we can deform $\f$ into a closed foliation $\g$ also satisfying $\sec_\g>0$. Hence $M/\!/\g$ is a compact positively curved $q$-dimensional orbifold, so $M/\g=|M/\!/\g|$ endowed with the induced Riemannian length structure is a compact positively curved $q$-dimensional Alexandrov space. By \cite[Main Theorem]{koh} (with $k=0$ and $F=\mathbb{R}$) it follows that there is a constant $C=C(q)$ such that
$$\sum_{i=0}^q b_i(\f) =\sum_{i=0}^q b_i(|M/\!/\g|)\leq C,$$
where in the first equality we use Corollary \ref{cor: basic betti numbers as orbifold}.
\end{proof}

\section*{Acknowledgements}
We are grateful to Professors Marcos M. Alexandrino, Oliver Goertsches and Hiraku Nozawa for insightful discussions. The first author also thanks the Department of Mathematics of the University of São Paulo for the welcoming environment where a substantial part of this work was developed, and FAPESP for the research funding.

\end{document}